\theoremstyle{plain}
\newtheorem{thm}{Theorem}[section]
\newtheorem{lem}[thm]{Lemma}
\newtheorem{cor}[thm]{Corollary}
\newtheorem{prop}[thm]{Proposition}
\theoremstyle{definition}
\newtheorem{defn}[thm]{Definition}
\newtheorem{rem}[thm]{Remark}
\numberwithin{equation}{section}
\newcommand{\C}{\mathbb C}
\newcommand{\D}{\mathbb D}
\newcommand{\R}{\mathbb R}
\newcommand{\cF}{\mathcal F}
\newcommand{\cS}{\mathcal S}
\newcommand{\cH}{\mathcal H}
\newcommand{\zbar}{\bar{z}}
\newcommand{\wbar}{\bar{w}}
\renewcommand{\Re}{\operatorname{Re}}
\renewcommand{\Im}{\operatorname{Im}}
\newcommand{\loc}{\mathrm{loc}}
\let\swap=\phi
\let\phi=\varphi
\let\varphi=\swap
\let\swap=\epsilon
\let\epsilon=\varepsilon
\let\varepsilon=\swap
\let\swap=\leq
\let\leq=\leqslant
\let\leqslant=\swap
\let\swap=\geq
\let\geq=\geqslant
\let\geqslant=\swap
\title [Manifolds of quasiconformal mappings]
{Manifolds of quasiconformal mappings and\\ the nonlinear Beltrami equation}\date{}
\author{Kari Astala}
\address{LE STUDIUM, Loire Valley Institute for Advanced Studies, Orl\'eans \& Tours, France, MAPMO, rue de Chartres, 45100 Orl\'eans, France; 
Department of Mathematics and Statistics, University of Helsinki, 
         P.O. Box 68, FI-00014, Helsinki, Finland}
\email{kari.astala@helsinki.fi}
\author{Albert Clop}
\address{Department of Mathematics, Universitat Aut\`onoma de Barcelona, 08193 Bellaterra (Barcelona), Catalonia}
\curraddr{}
\email{albertcp@mat.uab.cat}
\author{Daniel Faraco}
\address{Department of Mathematics, Universidad Aut\'onoma de Madrid, 28049 Ma\-drid, 
Spain; ICMAT CSIC-UAM-UCM-UC3M, 28049 Madrid, Spain}
\curraddr{}
\email{daniel.faraco@uam.es}
\author{Jarmo  J\"a\"askel\"ainen}
\address{Department of Mathematics and Statistics, University of Helsinki, 
         P.O. Box 68, FI-00014, Helsinki, Finland; Department of Mathematics, Universidad Aut\'onoma de Madrid, 28049 Madrid, 
Spain}
\curraddr{}
\email{jarmo.jaaskelainen@helsinki.fi}
\thanks{K.A. was supported by Academy of Finland project  SA-12719831. 
A.C. was supported by research grant   2014SGR75 (Generalitat de Catalunya), MTM2013-44699 (Ministerio de Economia y competitividad), Subprograma Ramon y Cajal (Gobierno de Espa\~na) and the EU initial training network MAnET Metric analysis for emergent technologies.  D.F. was supported by research grant MTM2011-28198 from the Ministerio de Ciencia e Innovaci\'on (MCINN), by MINECO: ICMAT Severo Ochoa project SEV-2011-0087, and by the ERC 301179.
J.J. was supported by the ERC 301179 and Academy of Finland (no. 276233)}
\keywords{Quasiconformal mappings, nonlinear Beltrami equation, submanifolds of Fr\'echet manifold}
\subjclass[2010]{Primary {30C62}; Secondary {35J60}, {35J46}}
\begin{document}

\frenchspacing

\begin{abstract}
\noindent In this paper we show  that the homeomorphic solutions to each nonlinear Beltrami equation $\partial_{\zbar}f = \cH(z, \partial_z f)$ generate a two-dimensional manifold  of quasiconformal mappings ${\cF}_{\cH} \subset W^{1,2}_{\loc}(\C)$.  Moreover, we show that under regularity assumptions on $\cH$, the manifold ${\cF}_{\cH}$ defines the structure function $\cH$ uniquely.
\end{abstract}

\maketitle

\section{Introduction}
\noindent In the context  of  $G$-compactness of  Beltrami operators (introduced in \cite{G-cpt}) it was recently proved that there is a one-to-one correspondence between the two-dimensional vector spaces of quasiconformal mappings and the $\R$-linear Beltrami equations  \cite{AN}, \cite{AJ}, \cite[Theorem 16.6.6]{AIM}. More precisely, given a linear family  $\mathcal{F}=\{
\alpha f+ \beta g : \alpha, \beta \in \R, \; \alpha^2 + \beta^2 \neq 0\} \subset W^{1,2}_{\loc}(\C)$ consisting of quasiconformal maps, there exists a unique couple of measurable functions $\mu$ and $\nu$ with $|\mu|+|\nu| 
 \leq k <1$   such that $\mathcal{F}$ is exactly  the set homeomorphic solutions to the $\R$-linear Beltrami equation 
\begin{equation} \label{1}
\partial_{\zbar} f(z)=\mu (z)\, \partial_{z} f(z) +\nu(z)\, \overline{\partial_{z} f(z) }, \qquad \text{for almost every $z\in\C$}.
\end{equation}
If $\nu\equiv0$, one gets the classical Beltrami equation and, in this case, the family is not only $\mathbb{R}$-linear, but also $\mathbb{C}$-linear. 

The goal of this paper is to study the corresponding questions in the nonlinear setting and, in particular, to understand the structure of the set of homeomorphic solutions to the nonlinear 
Beltrami equations
\begin{equation}\label{Hqr}
\partial_{\zbar} f(z)=\cH(z,\partial_{z} f(z)), \qquad \text{for almost every $z\in\C$}.
\end{equation}
 Starting from the pioneering work of Bojarski and Iwaniec \cite{Boj74}, \cite{iw}, \cite{boj} in the last two decades it has been shown that much of the linear theory for the Beltrami equation extends to the nonlinear situation, under basic assumptions in the nonlinearity to guarantee the uniform ellipticity. Namely, we consider  structure functions  $\cH:\C\times\C\to\C$  which satisfy
\begin{enumerate}[(H1)]
\item $\cH$ is $k$-Lipschitz in the second variable, that is, for $w_1, w_2\in\C$,
$$
|\cH(z,w_1)-\cH(z,w_2)|\leq k(z)|w_1-w_2|, \quad 0 \leq k(z) = \frac{K(z) - 1}{K(z) + 1} \leq k < 1,
$$
for almost every $z\in\C$, and the normalization $\cH(z,0)\equiv0$ holds.
\smallskip

\item For every $w\in\C$, the mapping $z\mapsto \cH(z,w)$ is measurable on $\C$.
\end{enumerate}
\medskip

\noindent Starting with these assumptions we  study the corresponding nonlinear Beltrami equations \eqref{Hqr}. The existence and the regularity theories of the nonlinear Beltrami equations resemble those of the linear one, see  \cite{Boj74}, \cite{iw}, \cite{boj}, \cite{AIS}, \cite{AIM}. However, the uniqueness of a homeomorphic solution is much more subtle than for linear equations,
where such solutions are determined by their values at two distinct points. For the nonlinear  equations this fact does not need to remain true,
as proved in \cite{ACFJS}. Therefore we make the following definition.
\begin{defn}[Uniqueness property]\label{uniqueness} 
We say that the Beltrami equation  \eqref{Hqr} has \emph{the uniqueness property} if for every $z_0, \, z_1, \, \omega_0,\, \omega_1 \in \mathbb{C},$ with $  z_0 \neq z_1, w_0 \neq w_1$, there is a unique homeomorphic solution  $f\in W^{1,2}_{\loc}(\C)$ to \eqref{Hqr} such that
$f(z_0)=\omega_0$ and $f(z_1)=\omega_1$.
\end{defn}

Thus all the linear equations~\eqref{1}  have  the uniqueness property, see \cite[Corollary 6.2.4]{AIM}. In the nonlinear case,
the equation \eqref{Hqr} has the uniqueness property if 
\begin{equation}\label{inftybound}
\limsup_{|z|\to\infty} k(z) < 3 - 2\sqrt{2} ,
\end{equation}
 but uniqueness property may fail  when \eqref{inftybound} is not true, see  \cite[Theorem 1.1]{ACFJS}. 
 On the other hand, we have the uniqueness property, e.g.  if $\cH$ is $1$-homogeneous in the second variable, see \cite[Theorem 8.6.2]{AIM} or \cite[Theorem 1.3]{ACFJS}. In the terms of the quasiconformal distortion, the 
  bound \eqref{inftybound} reads as $K(z) < \sqrt{2}$ near the infinity.

In the rest of the paper
we only consider structure functions $\cH$ such that the associated equation has the uniqueness property.
To every such structure function we can associate  a  family of quasiconformal mappings  $\cF_{\cH} = \{\phi_a\}_{a \in \mathbb{C}}$ by setting as $\phi_a$, $a\neq 0$, the unique $W^{1,2}_{\loc}(\C)$-homeomorphic solution to \eqref{Hqr} such that 
$$
\phi_a(0)=0, \qquad  \phi_a(1)=a.
$$
It is convenient to set $\phi_0(z) \equiv 0$.

It follows from the Lipschitz regularity of $\cH$, (H1), that if $f$ and $g$ are arbitrary $W^{1,2}_{\loc}$-solutions to \eqref{Hqr}, then $f-g$ is $K$-quasiregular; indeed,
\begin{equation}\label{Hqrextra}
|\partial_{\zbar}f(z) - \partial_{\zbar}g(z)| = |\cH(z, \partial_z f(z)) - \cH(z, \partial_z g(z))| \leq k\,|\partial_zf(z) - \partial_z g(z)|.
\end{equation}
 Furthermore, it can  be shown,  see Proposition~\ref{differences}, that the above uniqueness property is equivalent to requiring that for  homeomorphic solutions the difference  $f-g$  is either a constant or a homeomorphism. Hence in the latter case the difference is 
 quasiconformal, and this motivates the following definition. 
  
\begin{defn}[Field of quasiconformal mappings]\label{perhe}
We call a set $\cF = \{\phi_a\}_{a\in\,\C} \subset W^{1,2}_{\loc}(\C)$ a {\it field of quasiconformal mappings}, if the following holds for some $1 \leq K < \infty$.
\begin{enumerate}[(F1)]
\item  If $a\neq 0$, then $\phi_a$ is a $K$-quasiconformal mapping of $\C$, with $\phi_a(0)=0$ and $\phi_a(1)=a$. If $a=0$,  $\phi_0\equiv 0$.
\item The difference $\phi_a-\phi_b$ is $K$-quasiconformal, for $a\neq b$.
\end{enumerate}
If the field arises from a structure function $\cH$, as above, we denote it by ${\cF}_{\cH}$.  
  \end{defn}
  
 In Section 2 we prove that if  a field of quasiconformal mappings  $\cF$ is  $C^1$ in the parameter $a$, i.e., $a \mapsto \phi_a(z) \in C^1(\C)$ for every fixed  $z$,
 then the field is a $C^1$-embedded submanifold of the space $L^\infty_{\loc}(\C)$, a surface whose points are quasiconformal mappings. Here we use the setting and the definition of submanifolds of Fr\'echet spaces from Lang \cite{Lang}.
 
  Even without extra smoothness assumptions, for almost every $a$ the  tangent plane, $T_{\phi_a}\cF$, at the point $\phi_a$ exists and equals the set of the
  homeomorphic solutions to an  $\R$-linear Beltrami equation (see Proposition~\ref{dif} and Remark~\ref{bundleref}). 
  
  Moreover, we prove that if the field is induced by a nonlinear Beltrami equation determined by the structure function $\cH$, then $\cF_{\cH} $ is a ($C^1$-embedded) submanifold  of $W^{1,2}_{\loc}(\C)$, with the tangent bundle $T\cF_{\cH}:= \bigcup_{a\neq0} T_{\phi_a}\cF_{\cH}$  given by solutions to the $\R$-linear Beltrami equations that are obtained by linearizing the starting equation \eqref{Hqr}.

 \begin{thm}\label{partialder1}
Assume that $\cH$ has the uniqueness property and let $\cF_\cH=\{\phi_a (z)\}_{a\in\C}$.  If  $w\mapsto \cH(z, w) \in C^1(\C)$, then for every fixed $a \in \C$ 
the directional derivatives 
$$\partial_{e}^a \phi_a(z) := \lim_{ t \to 0^+} \frac{\phi_{a+te}(z) - \phi_a(z)}{t}, \qquad e \in \C,$$ are all quasiconformal mappings of $z$, satisfying the same $\R$-linear  Beltrami equation  
\begin{equation}\label{rlinearaint}
\partial_{\zbar}f(z) = \mu_a(z)\,\partial_z f(z) + \nu_a(z)\,\overline{\partial_z f(z)} \qquad \text{a.e.}
\end{equation}
\end{thm}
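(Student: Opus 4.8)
The plan is to realize the difference quotients $g_t := t^{-1}\big(\phi_{a+te}-\phi_a\big)$ (with $a,e$ fixed) as a normal family of normalized $K$-quasiconformal maps that solve $\R$-linear Beltrami equations converging to the linearization of \eqref{Hqr} along $\phi_a$, and then to use the uniqueness property of $\R$-linear equations to pin down their limit. Throughout I may assume $e\neq 0$, since for $e=0$ one has $g_t\equiv 0$, which trivially solves \eqref{rlinearaint}.

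First I would fix $a\in\C$, $e\in\C\setminus\{0\}$, and check that for small $t\neq 0$ each $g_t$ is a normalized $K$-quasiconformal map. Indeed, $\phi_{a+te}$ and $\phi_a$ are homeomorphic solutions of \eqref{Hqr} (recall that $\phi_0\equiv 0$ is itself a solution, by the normalization in (H1)), so by Proposition~\ref{differences} the difference $\phi_{a+te}-\phi_a$ is either constant or a homeomorphism; as it vanishes at $z=0$ and equals $te\neq 0$ at $z=1$, it is a homeomorphism, hence by \eqref{Hqrextra} it is $K$-quasiconformal. Consequently $g_t$ is $K$-quasiconformal with $g_t(0)=0$, $g_t(1)=e$, and $\{g_t\}_t$ is a normal family. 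From any sequence $t_n\to 0^+$ I would then extract a subsequence along which $g_{t_n}\to g$ locally uniformly and weakly in $W^{1,2}_{\loc}(\C)$, with $g$ either constant or $K$-quasiconformal; since $g(0)=0\neq e=g(1)$, the limit $g$ is a $K$-quasiconformal homeomorphism. Moreover $\partial_z\phi_{a+t_ne}-\partial_z\phi_a = t_n\,\partial_z g_{t_n}\to 0$ in $L^2_{\loc}(\C)$, so after a further extraction $\partial_z\phi_{a+t_ne}\to\partial_z\phi_a$ almost everywhere.

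Next I would linearize, using that $w\mapsto\cH(z,w)\in C^1(\C)$. Denoting by $\cH_w,\cH_{\wbar}$ the Wirtinger derivatives of $\cH$ in the second variable and integrating $D_w\cH(z,\cdot)$ along the segment from $\partial_z\phi_a(z)$ to $\partial_z\phi_{a+te}(z)$, equation \eqref{Hqr} rewrites as
\begin{equation*}
\partial_{\zbar}g_t(z)=\mu_t(z)\,\partial_z g_t(z)+\nu_t(z)\,\overline{\partial_z g_t(z)},
\end{equation*}
where $\mu_t(z)=\int_0^1\cH_w\big(z,\partial_z\phi_a(z)+s(\partial_z\phi_{a+te}(z)-\partial_z\phi_a(z))\big)\,ds$, $\nu_t$ is the analogous integral of $\cH_{\wbar}$, and $|\mu_t(z)|+|\nu_t(z)|\leq k(z)\leq k<1$ by (H1). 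Along the subsequence, combining the a.e.\ convergence $\partial_z\phi_{a+t_ne}\to\partial_z\phi_a$, the continuity of $\cH_w,\cH_{\wbar}$ in $w$, and dominated convergence, I get $\mu_{t_n}\to\mu_a$ and $\nu_{t_n}\to\nu_a$ a.e.\ and in every $L^p_{\loc}(\C)$, $p<\infty$, where $\mu_a(z):=\cH_w(z,\partial_z\phi_a(z))$ and $\nu_a(z):=\cH_{\wbar}(z,\partial_z\phi_a(z))$, with $|\mu_a|+|\nu_a|\leq k$. Since $\partial_z g_{t_n}\rightharpoonup\partial_z g$ weakly in $L^2_{\loc}$ while $\mu_{t_n}\to\mu_a$ strongly in $L^2_{\loc}$ and stays uniformly bounded, the products converge, $\mu_{t_n}\partial_z g_{t_n}\rightharpoonup\mu_a\partial_z g$ and likewise for the $\nu$-term, say in $L^1_{\loc}$; together with $\partial_{\zbar}g_{t_n}\rightharpoonup\partial_{\zbar}g$ this shows that $g$ is a homeomorphic $W^{1,2}_{\loc}$-solution of \eqref{rlinearaint} with $g(0)=0$ and $g(1)=e$.

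Finally, since the $\R$-linear equation \eqref{rlinearaint} has ellipticity constant $k<1$, it has the uniqueness property (\cite[Corollary 6.2.4]{AIM}), so its homeomorphic solution taking the values $0$ and $e$ at $0$ and $1$ is unique. Hence every subsequential limit $g$ coincides, so in fact $g_t\to g$ locally uniformly as $t\to 0^+$; in particular the pointwise limit $\partial_e^a\phi_a(z)=\lim_{t\to 0^+}g_t(z)$ exists, equals $g(z)$, and is a $K$-quasiconformal map satisfying \eqref{rlinearaint}. Since $\mu_a,\nu_a$ depend only on $\cH$ and $\phi_a$ but not on $e$, all the directional derivatives $\partial_e^a\phi_a$ satisfy one and the same equation. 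The main obstacle, I expect, is the passage to the limit in the products $\mu_t\,\partial_z g_t$ (and, relatedly, guaranteeing the limit is a genuine homeomorphism so the uniqueness property applies): this rests on the uniform ellipticity bound from (H1) — which both keeps $\mu_t,\nu_t$ bounded and forces $\mu_a,\nu_a$ into the same ellipticity class — together with the strong $L^2_{\loc}$-convergence $\partial_z\phi_{a+te}\to\partial_z\phi_a$ extracted from the $W^{1,2}_{\loc}$-compactness of $\{g_t\}$.
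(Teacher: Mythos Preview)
Your proof is correct and follows the same overall architecture as the paper's: show the difference quotients form a normal family of normalized $K$-quasiconformal maps, pass to a subsequential limit, verify the limit solves the $\R$-linear equation \eqref{rlinearaint}, and invoke uniqueness for $\R$-linear equations to conclude the full limit exists and is independent of $e$ only through the common coefficients $\mu_a,\nu_a$.

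The one genuine difference lies in how you pass to the limit in the equation. The paper expands $\cH(z,\cdot)$ to first order at the fixed point $w_0=\partial_z\phi_a(z)$, obtaining
\[
\partial_{\zbar}\eta^e_{t_j} = \mu_a\,\partial_z\eta^e_{t_j} + \nu_a\,\overline{\partial_z\eta^e_{t_j}} + h_{t_j},
\]
and then spends most of the effort showing $h_{t_j}\to 0$ weakly; this requires a Borel--Cantelli-type argument to produce a sparse subsequence along which $t_j\,\partial_z\eta^e_{t_j}\to 0$ a.e. You instead use the integral mean-value representation to write $g_t$ as an \emph{exact} solution of a $t$-dependent $\R$-linear equation with coefficients $\mu_t,\nu_t$ bounded by $k$, and then show $\mu_t\to\mu_a$, $\nu_t\to\nu_a$ strongly in $L^p_{\loc}$. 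This is cleaner: the a.e.\ convergence $\partial_z\phi_{a+t_ne}\to\partial_z\phi_a$ that you extract from $t_n\,\partial_z g_{t_n}\to 0$ in $L^2_{\loc}$ (using the uniform $W^{1,2}_{\loc}$-bound on $g_t$) replaces the paper's set $B(R)$ argument, and the weak--strong product lemma you invoke for $\mu_{t_n}\partial_z g_{t_n}\rightharpoonup \mu_a\partial_z g$ is straightforward. Both approaches rest on the same two pillars you identify at the end---uniform ellipticity (H1) and $W^{1,2}_{\loc}$-compactness of the normalized family---but your packaging avoids an explicit remainder term.
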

\medskip

It turns out that
\begin{equation}\label{rlineara2}
\mu_a(z) = \partial_w\cH\big(z, \partial_z \phi_a(z)\big)\quad \text{ and } \qquad \nu_a(z) = \partial_{\bar{w}}\cH\big(z, \partial_z \phi_a(z)\big),
\end{equation}
and, moreover, $a\mapsto\phi_a$ is a continuously  differentiable $W^{1,2}_{\loc}(\C)$-valued function. 
  \medskip
  
Next, we 
investigate to which extent the relation between the structure function  $\cH$ and the field $\cF_{\cH}$  is unique, as it is when $\cH$ is linear.
Given any field of quasiconformal mappings  $\cF$  as in Definition \ref{perhe} we can formally associate to it  a nonlinear Beltrami equation represented by some structure function $\cH_{\cF}$. Indeed, simply by starting with the necessary condition
\begin{equation}\label{kaava}
\cH_{\cF}(z,w)=\partial_{\bar z} \phi_a(z) \quad \mbox{if } w=\partial_z \phi_a (z),
\end{equation}
and then extending the structure function to the whole $\C \times \C$, for example, by using Kirzsbraun's extension theorem. Note that by the  condition (F2) in Definition \ref{perhe}, the identity \eqref{kaava} gives a well-defined structure function. In general this structure function $\cH_\cF$ does not need to satisfy the conditions (H1) and (H2). 

However, starting from a smooth enough $\cH$, the field determines the structure function uniquely. 
\begin{defn}[Regular structure function]\label{defregular}
We say that the structure function $\cH$  is {\em regular} if, in addition to (H1) and (H2),
$\cH$ has the uniqueness property and  if for some $\alpha \in (0, 1)$,  
\begin{equation}\label{holdercondition}
|\cH(z_1, w) - \cH(z_2, w)| \leq \mathbf{H}_{\alpha}(\Omega)\,|z_1 - z_2|^{\alpha}|w| \qquad z_i \in \Omega, \quad w \in \C,
\end{equation}
for all $\Omega \subset \C$ bounded, and 
\begin{equation*}
(z, w) \mapsto D_w \cH(z,w)  \quad \text{is locally $\alpha$-H\"older continuous in } \C \times \C.
\end{equation*}
\end{defn} 
\medskip

Under these regularity assumptions we obtain
 
\begin{thm}\label{Uniquenessthm}
Suppose that $\cH$ is  regular. Then   $\cF_{\cH}$ defines $\cH$ uniquely,  that is,  ${\cH}_{{\cF}_\cH}=\cH$.
\end{thm}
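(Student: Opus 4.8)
The plan is to reduce the equality $\cH_{\cF_\cH}=\cH$ to a single geometric statement about the field, namely that \emph{for almost every $z\in\C$ the set $G_z:=\{\partial_z\phi_a(z):a\in\C\}$ is dense in $\C$}. Granting this, the theorem follows by soft arguments. For each $a$ the definition \eqref{kaava} reads $\cH_{\cF_\cH}(z,\partial_z\phi_a(z))=\partial_{\bar z}\phi_a(z)$, while $\phi_a$ solves \eqref{Hqr}, so $\partial_{\bar z}\phi_a(z)=\cH(z,\partial_z\phi_a(z))$ for a.e.\ $z$; hence $\cH_{\cF_\cH}(z,w)=\cH(z,w)$ for every $w\in G_z$ and a.e.\ $z$. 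Now $\cH(z,\cdot)$ is $k$-Lipschitz by (H1), and the values of $\cH_{\cF}$ on $G_z$ are likewise $k$-Lipschitz (by the same computation as in \eqref{Hqrextra}), so the Kirszbraun extension keeps $\cH_{\cF_\cH}(z,\cdot)$ Lipschitz, in particular continuous in $w$. Two continuous functions agreeing on the dense set $G_z$ agree everywhere, so $\cH_{\cF_\cH}(z,\cdot)=\cH(z,\cdot)$ for a.e.\ $z$, that is $\cH_{\cF_\cH}=\cH$ as structure functions.

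Next I would record the regularity consequences of ``$\cH$ regular''. The H\"older bound \eqref{holdercondition} together with the local H\"older continuity of $D_w\cH$ place \eqref{Hqr} within the Schauder-type theory for nonlinear Beltrami equations, so each $\phi_a\in C^{1,\alpha}_{\loc}(\C)$; in particular $\partial_z\phi_a,\partial_{\bar z}\phi_a$ are genuine continuous functions and \eqref{Hqr} holds pointwise. Combined with the fact, stated after Theorem~\ref{partialder1}, that $a\mapsto\phi_a$ is a $C^1$ map into $W^{1,2}_{\loc}(\C)$, and with uniform local Schauder bounds on compact sets of the parameter, the map $(a,z)\mapsto\partial_z\phi_a(z)$ is continuous. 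In particular, for fixed $z$ the map $F_z\colon a\mapsto\partial_z\phi_a(z)$ is continuous, and $G_z=F_z(\C)$ is a connected subset of $\C$ containing $0=\partial_z\phi_0(z)$.

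The heart of the matter is then the \emph{Claim: for a.e.\ $z_0\in\C$ the map $F_{z_0}$ is surjective onto $\C$}. I would first prove that $F_{z_0}$ is open for a.e.\ $z_0$. By Theorem~\ref{partialder1} and the $C^1$-dependence on $a$, the directional derivative $\partial_e^{a}\phi_a$ is $\R$-linear in $e$ and is the homeomorphic solution of the linearized $\R$-linear equation \eqref{rlinearaint} with $0\mapsto 0$, $1\mapsto e$ (since $\phi_a(0)=0$, $\phi_a(1)=a$); writing $u_a,v_a$ for $e=1,i$, the differential of $F_{z_0}$ at $a$ is the $\R$-linear map $e\mapsto\Re(e)\,\partial_z u_a(z_0)+\Im(e)\,\partial_z v_a(z_0)$, with real Jacobian $\Im\big(\overline{\partial_z u_a(z_0)}\,\partial_z v_a(z_0)\big)$. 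This vanishes only when some nontrivial real combination $g=su_a+tv_a$, necessarily a nonconstant hence homeomorphic solution of \eqref{rlinearaint}, has $\partial_z g(z_0)=0$. Using the Stoilow-type normal form for $\R$-linear Beltrami equations (as in \cite[Ch.~6]{AIM}), this reduces to the vanishing of the complex derivative of an injective reduced-Beltrami solution, which occurs only on a null set; pushing this through, with care about the joint dependence on $a$, shows that the Jacobian of $F_{z_0}$ is nonzero for every $a$, for a.e.\ $z_0$. Thus $F_{z_0}$ is a local homeomorphism and $G_{z_0}$ is open and connected.

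The step I expect to be the main obstacle is passing from openness to surjectivity. One route is to show $F_{z_0}$ is proper for a.e.\ $z_0$, i.e.\ $|\partial_z\phi_a(z_0)|\to\infty$ as $|a|\to\infty$: here one renormalizes the $K$-quasiconformal maps $\phi_a$, uses the compactness of normalized quasiconformal maps and the fact that the differences $\phi_a-\phi_b$ again form a field (so that the distortion estimates, via condition (F2), forbid all derivatives $\partial_z\phi_a(z_0)$, $a\in\C$, from staying bounded on a positive-measure set of $z_0$). A proper local homeomorphism $\C\to\C$ is a finite covering, hence a homeomorphism since $\C$ is simply connected, so $F_{z_0}(\C)=\C$. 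An alternative is a degree argument exploiting that $a\mapsto\phi_a(z_0)$ is already a homeomorphism of $\C\setminus\{0\}$ (two points determine a solution), which pins the degree of $F_{z_0}$ to one. Either way the Claim, and with it the theorem, follows.
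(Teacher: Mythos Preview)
Your overall architecture---prove that $F_{z}\colon a\mapsto\partial_z\phi_a(z)$ is a local homeomorphism and proper, hence onto---is precisely the paper's (Lemma~\ref{keylemma} plus Proposition~\ref{FHisnondeg}, assembled into Theorem~\ref{family}). The reduction to mere density of $G_z$ is sound but saves nothing, since in the end you still argue surjectivity. There are, however, two genuine gaps, and they sit exactly where the paper spends its analytic capital.

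\textbf{Openness.} You correctly compute $\det D_aF_{z_0}(a)=\Im\big(\overline{\partial_z u_a(z_0)}\,\partial_z v_a(z_0)\big)$ and observe that, for fixed $a$, this null Lagrangian vanishes only on a $z_0$-null set. But the conclusion you want is ``nonzero for \emph{every} $a$, for a.e.\ $z_0$'', and that asks the union over the uncountable parameter $a\in\C$ of these null sets to remain null; ``care about the joint dependence on $a$'' is not an argument, and a Fubini reduction only gives that for a.e.\ $z_0$ the $a$-set of zeros has measure zero, not that it is empty. The paper sidesteps this entirely by using the regularity of $\cH$ more fully: the linearized coefficients $\mu_a,\nu_a$ are locally H\"older in $z$ (computation \eqref{muaholder}), and for $\R$-linear Beltrami equations with H\"older coefficients the null Lagrangian $\Im(\partial_z f\,\overline{\partial_z g})$ is nonzero \emph{everywhere}, not merely a.e.\ (\cite[Proposition~5.1]{G-cl}). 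This is Proposition~\ref{locinj}, valid at every $(z,a)$, and it is why the paper obtains a homeomorphism for every $z$ rather than a.e.\ $z$.

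\textbf{Properness.} Your sketch (``distortion estimates forbid all derivatives from staying bounded on a positive-measure set of $z_0$'') does not close: normal-family compactness controls $L^p$ averages of $D_z(\phi_a/a)$, not pointwise values, and the hypothetical bounded subsequence $a_n\to\infty$ depends on $z_0$, so no measure-theoretic contradiction is at hand. The paper's properness (Corollary~\ref{continfty}) rests on Theorem~\ref{Jac}: a uniform pointwise lower bound $J(z,\phi_a/a)\ge c(\cH,R)>0$ on $\D(0,R)$, independent of $a$, established separately in \cite{ACFKJ} by a genuinely nonlinear argument. This yields $|\partial_z\phi_a(z)|\ge c\,|a|$ for every $z$ in compacta, hence properness of $F_z$ at every $z$. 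Your degree alternative would likewise need a bound of this strength on large circles to pin down the degree of $F_z$.
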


Notice that since $\cH =\cH_\cF $ satisfies \eqref{kaava},  
 the  properties of $a \mapsto \partial_z \phi_a (z)$  
 play a fundamental role  in proving  the   uniqueness of  $\cH$. Even in the linear case, the fact that  $a\mapsto\partial_z \phi_a (z)$ is a bijection of $\C$  is 
 nontrivial, as it requires the recently proved   Wronsky-type theorems \cite{AN}, \cite{AJ}. Thus an important  part of the proof of Theorem~\ref{Uniquenessthm} is to show the following result.

\begin{thm}\label{homeo} 
Suppose that $\cH$ is regular and  $\cF_\cH=\{\phi_a (z)\}_{a\in\C}$. Then the map $a \mapsto \partial_z \phi_a(z)$ is a homeomorphism on the Riemann sphere, for every fixed $z\in \C$.  \end{thm}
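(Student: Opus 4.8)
The plan is to show that, for each fixed $z_0\in\C$, the map $\Phi_{z_0}\colon\C\to\C$, $a\mapsto\partial_z\phi_a(z_0)$, is continuous, injective and \emph{proper}, and then to extend it across $\infty$; the crux is a pointwise (\emph{every}-$z$) non-vanishing statement, reached by reducing to the linear theory via differences. Since $\cH$ is regular, the interior regularity theory of the nonlinear Beltrami equation (\cite{iw}, \cite{boj}, \cite[Ch.~15--16]{AIM}) gives $\phi_a\in C^{1,\alpha}_{\loc}(\C)$ for every $a$, and, together with the $C^1$-dependence on $a$ recorded after Theorem~\ref{partialder1} and the fact that the $C^{1,\alpha}$-bounds are locally uniform in $a$, that $a\mapsto\phi_a$ is continuous into $C^{1}_{\loc}(\C)$; in particular $\Phi_{z_0}$ is continuous. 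For $a\neq b$ put $g:=\phi_a-\phi_b$, which is $K$-quasiconformal by Definition~\ref{perhe}(F2) (equivalently by Proposition~\ref{differences}, as $\phi_a(1)=a\neq b=\phi_b(1)$). Using $\cH(z,0)\equiv0$ and the fundamental theorem of calculus in the second variable,
\begin{equation*}
\partial_{\zbar}g(z)=\cH(z,\partial_z\phi_a)-\cH(z,\partial_z\phi_b)=A(z)\,\partial_z g(z)+B(z)\,\overline{\partial_z g(z)},
\end{equation*}
where $A(z)=\int_0^1\partial_w\cH\big(z,\partial_z\phi_b+t\,\partial_z g\big)\,dt$ and $B$ is defined analogously with $\partial_{\bar w}\cH$; then $|A|+|B|\leq k<1$ by (H1), and $A,B$ are locally $\alpha$-H\"older, since $D_w\cH$ is and $\partial_z\phi_a,\partial_z\phi_b\in C^{0,\alpha}_{\loc}$. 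Thus $g$ is a $K$-quasiconformal $W^{1,2}_{\loc}$-solution of an $\R$-linear Beltrami equation with H\"older coefficients.

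Here the Wronsky-type theorems for $\R$-linear Beltrami equations (\cite{AN}, \cite{AJ}) enter: under H\"older regularity of the coefficients they give that a homeomorphic solution of such an equation has nowhere-vanishing $\partial_z$-derivative (and is in fact a $C^{1,\alpha}$-diffeomorphism), in sharp contrast with the merely bounded-measurable case, where this fails. Applied to $g$ this yields $\partial_z g(z_0)\neq0$, i.e. $\partial_z\phi_a(z_0)\neq\partial_z\phi_b(z_0)$. Together with $\partial_z\phi_0\equiv0$ and the case $b=0$ this proves that $\Phi_{z_0}$ is injective on $\C$.

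A continuous injective map between planar domains is open (Brouwer's invariance of domain), so $\Phi_{z_0}$ maps $\C$ homeomorphically onto the open set $U:=\Phi_{z_0}(\C)$; it remains to check $U=\C$, for which it suffices that $\Phi_{z_0}$ be proper, i.e. $|\partial_z\phi_a(z_0)|\to\infty$ as $|a|\to\infty$ (then $U$ is also closed, hence $U=\C$). Properness follows by combining quasiconformal distortion with the Schauder bound: from $\phi_a(0)=0$, $\phi_a(1)=a$ and $K$-quasiconformality one obtains $\|\phi_a\|_{L^\infty(B(0,R))}\lesssim_{K,R}|a|$ and, for a fixed small ball around $z_0$, the area estimate $\int_{B(z_0,\delta)}|\partial_z\phi_a|^2\geq|\phi_a(B(z_0,\delta))|\gtrsim_{K,\delta,z_0}|a|^2$; the interior $C^{1,\alpha}$-estimate for the $\R$-linear equation above then transfers this lower bound from the mean over $B(z_0,\delta)$ to the value at $z_0$. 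Hence $\Phi_{z_0}\colon\C\to\C$ is a proper homeomorphism with $\Phi_{z_0}(0)=0$, so it extends to a homeomorphism of the Riemann sphere by $\infty\mapsto\infty$; since $z_0\in\C$ was arbitrary, Theorem~\ref{homeo} follows.

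I expect the main obstacle to be the pointwise non-vanishing in the second paragraph: it is exactly here that the H\"older hypothesis contained in ``regular'' is indispensable — the corresponding statement is false for bounded measurable coefficients (e.g.\ the radial stretching $z\mapsto z|z|$ solves a classical Beltrami equation $\partial_{\zbar}\Psi=\mu\,\partial_z\Psi$ with $\|\mu\|_\infty<1$ yet has $\partial_z\Psi(0)=0$) — and this is why the Wronsky-type theorems must be invoked. Making the properness quantitative, i.e. controlling the distortion constants against the Schauder constants uniformly as $|a|\to\infty$, is the other, more technical, point.
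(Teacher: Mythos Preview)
Your injectivity argument is correct and takes a genuinely different route from the paper. The paper never proves $\Phi_{z_0}$ injective directly; instead it establishes \emph{local} injectivity by computing $\det D_a(\partial_z\phi_a)(z)=\Im(\partial_z f\,\overline{\partial_z g})$ with $f=\partial_1^a\phi_a$, $g=-\partial_i^a\phi_a$ solutions of the linearised equation \eqref{rlineara}, and then appeals to the null-Lagrangian non-vanishing result of \cite{G-cl} for \emph{pairs} of solutions (Proposition~\ref{locinj}); global bijectivity then comes from properness and monodromy (Lemma~\ref{keylemma}). Your idea of writing the \emph{difference} $g=\phi_a-\phi_b$ as a homeomorphic solution of an $\R$-linear equation with H\"older coefficients, and invoking the single-solution fact that such a map has $\partial_z g\neq0$ everywhere, yields global injectivity in one stroke and avoids differentiating in $a$ altogether. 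This is conceptually cleaner; what the paper's route buys is the quantitative statement \eqref{nullLag}, used elsewhere, and a formulation that fits its general non-degeneracy framework (Definition~\ref{nondegdef}).

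The gap is in properness. Your sentence ``the interior $C^{1,\alpha}$-estimate \dots\ transfers this lower bound from the mean over $B(z_0,\delta)$ to the value at $z_0$'' does not work: a Schauder bound controls $\|D_z\phi_a\|_{C^\gamma}$ from \emph{above}, and knowing $\|\partial_z\phi_a\|_{L^2(B)}\gtrsim|a|$ together with $[\partial_z\phi_a]_{C^\gamma}\lesssim|a|$ does not exclude $\partial_z\phi_a(z_0)$ from being small. Worse, the H\"older norms of your linearising coefficients $A,B$ depend on $\|D_z\phi_a\|_{C^\gamma}\sim|a|$ through the only \emph{locally} H\"older $D_w\cH$, so any estimate coming from that $\R$-linear equation is not uniform in $a$. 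The paper resolves this by rescaling: $f=\phi_a/a$ solves the equation with structure function $\tilde\cH(z,w)=\frac1a\cH(z,aw)$, whose constant in \eqref{holdercondition} equals that of $\cH$ (this is exactly why \eqref{holdercondition} is stated in $|w|$-homogeneous form), and then invokes the nonlinear Jacobian lower bound Theorem~\ref{Jac} for the normalised $f$ to get $|\partial_z\phi_a(z_0)|\ge c(\cH,R)\,|a|$ uniformly (Corollary~\ref{continfty}). You flag this as ``the other, more technical, point'', but it is not a bookkeeping matter: as the paper remarks after Theorem~\ref{Jac}, the linear proofs do not extend and a genuinely nonlinear argument is required.
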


The proof of this statement requires a number of results of 
independent interest in the study of nonlinear Beltrami equations. The gain of regularity in  the $a$-variable is reflected in our Theorem~{\ref{partialder1}}. Concerning the behaviour in the $z$-variable, here we first need  Schauder-type estimates for nonlinear Beltrami equations.

\begin{thm}\label{schauder}
Let the structure function $\cH$ satisfy \eqref{holdercondition}. Then every quasiregular solution $f$ to
$$
\partial_{\zbar} f(z) = \cH(z, \partial_z f(z)) \qquad \text{for a.e. $z\in \Omega$}
$$
belongs to $C^{1, \gamma}_{\loc}(\Omega)$ where $\gamma = \alpha$, if $\alpha<\frac1K$, and otherwise one can take any $\gamma<\frac1K$. Here $K = \frac{1 + k}{1 - k}$. Moreover, we have a norm bound when $\D(z_1, 2r) \Subset \Omega$,
\begin{equation}\label{thmnorm}
{\|D_zf\|}_{C^\gamma(\D(z_1, r))}
\leq c(K, \alpha, \gamma, z_1, r, \mathbf{H}_\alpha(\Omega))\,\|D_z f\|_{L^2(\D(z_1, 2r))}.\end{equation}\end{thm}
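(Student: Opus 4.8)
\medskip
\noindent\emph{Proof plan.} I would run a Campanato-type \emph{freezing} argument. After recording the higher integrability of $D_z f$, the theorem reduces to a decay estimate for the oscillation of $D_z f$; one obtains it by comparing $f$, on each small disk, with the solution of the \emph{autonomous} equation obtained by freezing the $z$-slot of $\cH$, by exploiting that solutions of autonomous nonlinear Beltrami equations have $K$-quasiregular --- hence $C^{1/K}_{\loc}$ --- gradient, and by estimating the difference through the H\"older oscillation \eqref{holdercondition}. First I would reduce to smooth $\cH$: mollifying $\cH(z,w)$ in both variables and subtracting the value at $w=0$ (to preserve $\cH(z,0)=0$) yields smooth structure functions obeying (H1), (H2) and \eqref{holdercondition} with constants converging to those of $\cH$, and since solutions can be taken to depend continuously on the structure function it suffices to prove \eqref{thmnorm} for smooth $\cH$, with the constant depending only on $K,\alpha,\gamma$, the geometry and $\mathbf{H}_\alpha(\Omega)$. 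For smooth $\cH$ the quasiregular solution $f$ is smooth in $\Omega$, so all the differentiations below are legitimate. Fixing $\D(z_1,2r)\Subset\Omega$, Astala's theorem gives $D_z f\in L^p_{\loc}$ for $p<\tfrac{2K}{K-1}$, Mori gives $f\in C^{1/K}_{\loc}$, and Caccioppoli with reverse H\"older yields the Morrey bound $\int_{\D(z_0,\rho)}|D_z f|^2\leq C(K)(\rho/r)^{2/K}\|D_z f\|_{L^2(\D(z_1,2r))}^2$ whenever $\D(z_0,4\rho)\subset\D(z_1,2r)$; I also record $|(\partial_z f)_{\D(z_0,\rho)}|\leq C\rho^{-1}\|D_z f\|_{L^2(\D(z_1,2r))}$ at such scales.

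Next, fix $z_0\in\D(z_1,r)$ and a small $\rho$, put $\D=\D(z_0,\rho)$, and let $g\in f+W^{1,2}_0(\D)$ solve the autonomous equation $\partial_{\zbar}g=\cH(z_0,\partial_z g)$ in $\D$ with $g=f$ on $\partial\D$; this is solvable because $\phi\mapsto B\bigl[\chi_\D\bigl(\cH(z_0,\partial_z f+\phi)-\partial_{\zbar}f\bigr)\bigr]$ is a contraction on $L^2(\D)$, the Beurling transform $B$ being an $L^2(\C)$-isometry and $\cH(z_0,\cdot)$ being $k$-Lipschitz. Differentiating the (autonomous, smooth) equation, $h:=\partial_z g$ satisfies $\partial_{\zbar}h=\partial_w\cH(z_0,h)\,\partial_z h+\partial_{\wbar}\cH(z_0,h)\,\overline{\partial_{\zbar}h}$; since $|\partial_w\cH|+|\partial_{\wbar}\cH|\leq k$, eliminating the antilinear term turns this into an $\R$-linear Beltrami equation for $h$ with ellipticity constant $\leq k$, so $h=\partial_z g$ is $K$-quasiregular. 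By Mori and Caccioppoli this yields the Campanato decay
\begin{equation*}
\int_{\D(z_0,\sigma)}\bigl|\partial_z g-(\partial_z g)_{\D(z_0,\sigma)}\bigr|^2\leq C(K)\Bigl(\tfrac{\sigma}{\rho}\Bigr)^{2+\frac2K}\int_{\D}\bigl|\partial_z g-(\partial_z g)_{\D}\bigr|^2,\qquad 0<\sigma\leq\tfrac{\rho}{2}.
\end{equation*}
For $v=f-g\in W^{1,2}_0(\D)$, splitting the equation and using (H1) and \eqref{holdercondition} gives $|\partial_{\zbar}v|\leq k\,|\partial_z v|+\mathbf{H}_\alpha(\Omega)\,\rho^{\alpha}|\partial_z g|$; extending $v$ by zero and applying $B$, the operator $I-B\mu\chi_\D$ (with the bounded coefficient $\mu$ read off from this splitting) is invertible on $L^2$ with norm $\leq(1-k)^{-1}$, so $\|D_z v\|_{L^2(\D)}\leq C(k)\,\mathbf{H}_\alpha(\Omega)\,\rho^{\alpha}\|\partial_z g\|_{L^2(\D)}$; since $g$ carries the trace of $f$ this self-improves, for $\rho$ below a threshold $\rho_0=\rho_0(k,\mathbf{H}_\alpha(\Omega))$, to $\int_\D|D_z v|^2\leq C(k)\,\mathbf{H}_\alpha(\Omega)^2\rho^{2\alpha}\int_\D|D_z f|^2$.

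Writing $\Phi(\rho)=\int_{\D(z_0,\rho)}|\partial_z f-(\partial_z f)_{\D(z_0,\rho)}|^2$ and $M(\rho)=|(\partial_z f)_{\D(z_0,\rho)}|$, the decomposition $\partial_z f=\partial_z g+\partial_z v$, the previous two estimates, $|\partial_{\zbar}f|\leq k|\partial_z f|$ and $\int_\D|D_z f|^2\leq C(\Phi(\rho)+\rho^2M(\rho)^2)$ give, for $\sigma\leq\rho/2\leq\rho_0$,
\begin{equation*}
\Phi(\sigma)\leq C(K)\Bigl(\tfrac{\sigma}{\rho}\Bigr)^{2+\frac2K}\Phi(\rho)+C\,\mathbf{H}_\alpha(\Omega)^2\rho^{2\alpha}\Phi(\rho)+C\,\mathbf{H}_\alpha(\Omega)^2\rho^{2+2\alpha}M(\rho)^2,\qquad M\!\left(\tfrac{\rho}{2}\right)\leq M(\rho)+C\rho^{-1}\Phi(\rho)^{1/2}.
\end{equation*}
Fed with the bounds on $\Phi(\rho_0)$ and $M(\rho_0)$ from the first paragraph, this coupled decay system is handled by the standard iteration lemma used for Schauder estimates of elliptic systems: it gives $M(\rho)\leq C$ and $\Phi(\rho)\leq C\rho^{2+2\gamma}$ uniformly in $z_0\in\D(z_1,r)$, with $\gamma=\alpha$ when $\alpha<\tfrac1K$ and any $\gamma<\tfrac1K$ otherwise, the constant being of the form in \eqref{thmnorm}. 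The Campanato embedding then gives $\partial_z f\in C^{\gamma}(\D(z_1,r))$ with the asserted bound, and $\partial_{\zbar}f=\cH(\cdot,\partial_z f)\in C^{\gamma}$ follows since $\cH$ is $\alpha$-H\"older in $z$, $k$-Lipschitz in $w$, and $\gamma\leq\alpha$. Undoing the smoothing completes the proof.

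The step I expect to be the main obstacle is the one in the second paragraph: the interior regularity of the frozen \emph{autonomous} equation, i.e.\ that the gradient of its solutions is again quasiregular with a constant depending only on $K$. Granting smoothness of $\cH$ this is the short differentiation above, but one must ensure that no quantity beyond the Lipschitz constant $k$ enters the resulting estimates --- otherwise the approximation reduction fails --- and, equivalently, without smoothing one is relying on the (known but not entirely trivial) second-order regularity theory for autonomous nonlinear Beltrami equations. Once this is secured, the energy estimate for the perturbed linear equation and the coupled Campanato iteration are routine, the only delicate point being the endpoint $\alpha=\tfrac1K$, where a loss of an arbitrarily small exponent has to be accepted.
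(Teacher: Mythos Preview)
The paper does not prove Theorem~\ref{schauder} here; it explicitly defers the proof to the companion paper \cite{ACFKJ} (see the paragraph immediately following the statement). So there is no in-paper argument to compare your proposal against.

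That said, your outline is the classical Campanato freezing scheme for Schauder estimates and is essentially correct. The two points you flag are precisely the crucial ones. The key step --- that for an autonomous structure function $\cH_0(w)=\cH(z_0,w)$ the gradient $\partial_z g$ of any solution is itself $K$-quasiregular with the \emph{same} $K$ --- is indeed the heart of the matter; once you have it, the Campanato decay with exponent $2+\tfrac{2}{K}$ for the frozen comparison solution combines with the perturbation bound coming from \eqref{holdercondition} through the standard coupled iteration to give exactly $\gamma=\alpha$ when $\alpha<\tfrac{1}{K}$ and any $\gamma<\tfrac{1}{K}$ otherwise, matching the statement. Your algebraic reduction is right: with $A=\partial_w\cH_0(h)$, $B=\partial_{\wbar}\cH_0(h)$ and $|A|+|B|\leq k$, solving $\partial_{\zbar}h=A\,\partial_z h+B\,\overline{\partial_{\zbar}h}$ for $\partial_{\zbar}h$ gives an $\R$-linear Beltrami equation with $|\mu|+|\nu|=\tfrac{|A|}{1-|B|}\leq\tfrac{k-|B|}{1-|B|}\leq k$. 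The approximation by smooth $\cH$ is legitimate because the Dirichlet problem for \eqref{Hqr} on a disk is uniquely solvable via the $L^2$-contraction you describe, the solution depends continuously on $\cH$ in $W^{1,2}$, and your a priori constant involves only $k$, $\alpha$, $\mathbf{H}_\alpha(\Omega)$ and the geometry, hence passes to the limit. The only place to watch is that the ``local boundedness'' step bounding $\operatorname{osc}_{\D_{\rho/2}}(\partial_z g)$ by its $L^2$-oscillation on $\D_\rho$ (needed to turn Mori into a Campanato decay of the stated form) uses that $\partial_z g-c$ is again $K$-quasiregular for any constant $c$; this is immediate from the distortion inequality but should be said.
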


The result is of independent interest, but for  smoothness of presentation we have decided to publish the result and its companion, Theorem~\ref{Jac}, in a separate work \cite{ACFKJ}. Let us emphasise that in Theorem~\ref{schauder} we do not assume the uniqueness property.

For regular structure functions, a combination of Theorem~\ref{partialder1} and Theorem~\ref{schauder} shows that the tangent space $T_{\phi_a}\cF_{\cH}$ consists of solutions to  $\R$-linear Beltrami equations whose coefficients \eqref{rlineara2} are H\"older continuous. Thus we can use classical Schauder estimates for linear equations  and  are entitled to freely change the order of differentiation. This enables us to transfer information from the $z$-variable to the $a$-variable and vice versa. 

The next key issue is the non-degeneracy of the field $\cF_{\cH}$. 
\begin{prop}\label{locinj}
Let $\cH$ be a regular  structure function and  $\cF_\cH=\{\phi_a (z)\}_{a\in\C}$. Then,  for every \! $a$ and  $z\in\C$, 
\begin{equation}\label{nullLag}
\det[D_a \partial_z \phi_a(z) ] \neq 0,
\end{equation} and the determinant does not change sign.
\end{prop}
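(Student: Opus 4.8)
Fix $a\in\C$ and write $g:=\partial_1^a\phi_a$, $h:=\partial_i^a\phi_a$ for the directional derivatives of $a\mapsto\phi_a$ in the directions $1$ and $i$. The plan is to recognise $\det[D_a\partial_z\phi_a(z)]$ as the Wronski-type quantity $\Im\!\big(\overline{\partial_z g(z)}\,\partial_z h(z)\big)$ attached to the two solutions $g,h$ of a single $\R$-linear Beltrami equation, and then to invoke the Wronski-type theorems for such equations together with the regularity theory assembled above.

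First I would record the structural facts. By Theorem~\ref{partialder1}, $g$ and $h$ are quasiconformal mappings of $z$ solving one and the same $\R$-linear equation~\eqref{rlinearaint}, with coefficients given by~\eqref{rlineara2}. Since $\cH$ is regular, Theorem~\ref{schauder} gives $\partial_z\phi_a\in C^\gamma_{\loc}$, and then~\eqref{rlineara2} together with the H\"older continuity of $D_w\cH$ shows that $\mu_a,\nu_a$ are locally H\"older; by the classical Schauder estimates for linear Beltrami equations it follows that $g,h\in C^{1,\gamma}_{\loc}(\C)$. Differentiating the normalisations $\phi_a(0)=0$ and $\phi_a(1)=a$ with respect to $a$ gives $g(0)=h(0)=0$, $g(1)=1$, $h(1)=i$; in particular $\alpha g+\beta h$ is nonconstant for every $(\alpha,\beta)\neq(0,0)$, so $g$ and $h$ span the two-dimensional space of homeomorphic solutions of~\eqref{rlinearaint}.

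The core step is then: for every $z\in\C$ the complex derivatives $\partial_z g(z)$ and $\partial_z h(z)$ are $\R$-linearly independent. This is where the Wronski-type theorems of Alessandrini--Nesi and Astala--J\"a\"askel\"ainen (cf.~\cite[Ch.~16]{AIM}) enter: every nontrivial combination $\alpha g+\beta h$ is, by linearity of~\eqref{rlinearaint} and the normalisation at $0$ and $1$, a $C^1$ quasiconformal homeomorphism, hence its complex derivative $\alpha\,\partial_z g+\beta\,\partial_z h$ vanishes nowhere (a zero would, through the similarity principle, be isolated and of finite order, contradicting local injectivity of $\alpha g+\beta h$). Nonvanishing of all nontrivial combinations at a point $z$ is exactly the $\R$-linear independence of $\partial_z g(z),\partial_z h(z)$; writing the determinant of a real $2\times2$ matrix with columns $u,v\in\C$ as $\Im(\bar u v)$, this reads $\Im\!\big(\overline{\partial_z g(z)}\,\partial_z h(z)\big)\neq0$ for all $z$.

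To conclude I would identify this quantity with $\det[D_a\partial_z\phi_a(z)]$. Using that $a\mapsto\phi_a$ is continuously differentiable as a $W^{1,2}_{\loc}(\C)$-valued map (the remark following Theorem~\ref{partialder1}) together with the locally uniform $C^{1,\gamma}$ bounds of Theorem~\ref{schauder}, one may interchange $\partial_z$ and $\partial_a$, so that $a\mapsto\partial_z\phi_a(z)$ is differentiable with $D_a\partial_z\phi_a(z)=[\,\partial_z g(z)\mid\partial_z h(z)\,]$, whence $\det[D_a\partial_z\phi_a(z)]=\Im\!\big(\overline{\partial_z g(z)}\,\partial_z h(z)\big)\neq0$; and since this is continuous in $(a,z)$ on the connected set $\C\times\C$ and never vanishes, it has constant sign. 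I expect the analytic bookkeeping of this last step --- genuine differentiability of $a\mapsto\partial_z\phi_a(z)$ with the claimed derivative, the interchange of $\partial_a$ and $\partial_z$, and joint continuity of the determinant --- to be the real obstacle, rather than the nonvanishing itself, which reduces to the Wronski theorem for the linearised equation; all of it rests on combining the $C^1$-in-$a$ conclusion of Theorem~\ref{partialder1} with the locally uniform Schauder estimates of Theorem~\ref{schauder} and the classical linear Schauder theory.
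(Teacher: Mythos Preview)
Your proposal is correct and follows essentially the same route as the paper: identify $\det[D_a\partial_z\phi_a(z)]$ with the null-Lagrangian $\Im(\overline{\partial_z g}\,\partial_z h)$ attached to two independent solutions of the linearised equation~\eqref{rlinearaint}, use the regularity of $\cH$ to obtain H\"older coefficients $\mu_a,\nu_a$, and then invoke the Wronski-type nonvanishing results. The paper packages the interchange of $\partial_a$ and $\partial_z$ as a separate preparatory result (Theorem~\ref{c2}), and for the nonvanishing it cites \cite[Proposition~5.1]{G-cl} directly rather than arguing through the similarity principle.

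One small caution: your parenthetical justification --- that a zero of $\alpha\,\partial_z g+\beta\,\partial_z h$ would contradict local injectivity of $\alpha g+\beta h$ --- is not quite sufficient on its own, since a $C^1$ homeomorphism may well have vanishing differential at a point (e.g.\ $z\mapsto z|z|$). The nonvanishing genuinely requires the H\"older regularity of the coefficients, which is exactly what \cite[Proposition~5.1]{G-cl} (or the analogous results in \cite{AN}, \cite{AJ}) exploits; your citations are the right ones, but the heuristic explanation undersells why regularity is essential here.
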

It turns out that the Jacobian \eqref{nullLag} is precisely a null Lagrangian expression $\Im(\partial_z f \, \overline{\partial_z g})$ formed by suitable solutions $f$ and $g$ to an $\R$-linear Beltrami equation.
These expressions are known  to be non-vanishing almost everywhere by  the recent  works \cite{G-cpt}, \cite{G-cl}, \cite{AN}, \cite{AJ}.

In the above setting the map $a \mapsto \partial_z \phi_a(z)$ is locally injective from $\C \to \C$, but these considerations still  do not imply non-degeneracy at $a=\infty$. For this we need to use the topology of the Riemann sphere. It turns out that, under the H\"older regularity of $\cH$ on the $z$-variable, this fact is a corollary of the following result, also independent of the uniqueness property.

\begin{thm}\label{Jac}
Let the structure function $\cH$ satisfy \eqref{holdercondition}.
Then a homeomorphic solution  $f \in W^{1,2}_{\loc}(\C)$ to the nonlinear Beltrami equation
\begin{equation*}
\partial_{\zbar} f(z) = \cH(z, \partial_z f(z)) \qquad \text{for a.e. $z\in \C$}
\end{equation*}
has a positive Jacobian, $J(z, f) > 0$.

Further, if $f:\C \to \C$ is a normalized solution, i.e., $f(0) = 0$ and $f(1) = 1$, we have the lower bounds for the Jacobian,
$$ \inf_{z\in\D(0, R)} J(z, f) \geq c(\cH, R) > 0, \qquad 0 < R < \infty. $$
\end{thm}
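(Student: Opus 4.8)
The statement has two parts, and the heart of the matter is the pointwise positivity $J(z,f)>0$; the quantitative lower bound then follows by a compactness argument. First note that $f$ is $K$-quasiconformal, since $|\partial_{\zbar}f|=|\cH(z,\partial_z f)|\leq k\,|\partial_z f|$ by (H1) and $\cH(z,0)=0$; hence Theorem~\ref{schauder} gives $f\in C^{1,\gamma}_{\loc}(\C)$, so $J(z,f)=|\partial_z f|^2-|\partial_{\zbar}f|^2$ is continuous, nonnegative, and positive a.e. Thus it is enough to rule out $\partial_z f(z_0)=0$ at a single point $z_0$ — note that then also $\partial_{\zbar}f(z_0)=0$, hence $Df(z_0)=0$. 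Composing with affine maps of the domain and target, which preserve (H1), (H2) and \eqref{holdercondition} (up to a change of $\mathbf{H}_\alpha$), I may assume $z_0=0$, $f(0)=0$, $|f(1)|=1$.

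The plan is to blow $f$ up at the origin. Put $s_t:=|f(t)|/t$ and $f_t(z):=f(tz)/|f(t)|$ for $0<t\leq1$; since $Df(0)=0$ we have $|f(t)|=o(t)$, so $s_t\to0$ as $t\to0^+$. Each $f_t$ is a normalized $K$-quasiconformal homeomorphism ($f_t(0)=0$, $|f_t(1)|=1$) solving $\partial_{\zbar}f_t=\cH_t(z,\partial_z f_t)$ with $\cH_t(z,w):=s_t^{-1}\cH(tz,s_t w)$, and a direct computation shows $\cH_t$ satisfies (H1) with the same $k$, the normalization $\cH_t(z,0)=0$, and \eqref{holdercondition} with constant $t^{\alpha}\mathbf{H}_\alpha$. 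By the norm bound of Theorem~\ref{schauder} and the area bound $\int_{\D(z_1,2r)}|D_z f_t|^2\leq K\,|f_t(\D(z_1,2r))|$ (uniformly bounded since the $f_t$ are normalized $K$-quasiconformal), the family $\{f_t\}$ is bounded in $C^{1,\gamma}(\D(z_1,r))$ on every disk, hence precompact in $C^1_{\loc}(\C)$. Extract $t_n\to0$ with $f_{t_n}\to g$ in $C^1_{\loc}(\C)$; since $\partial_z f_t(0)=s_t^{-1}\partial_z f(0)=0$ for every $t$, we get $\partial_z g(0)=0$. Writing $\cH_t(z,w)=s_t^{-1}[\cH(tz,s_t w)-\cH(0,s_t w)]+s_t^{-1}\cH(0,s_t w)$, the first summand is $O(t^{\alpha})$ locally uniformly, while the maps $w\mapsto s_t^{-1}\cH(0,s_t w)$ are equi-Lipschitz in $w$ and vanish at $0$; a further extraction therefore gives $\cH_{t_n}\to\cH_\infty$ locally uniformly, where $\cH_\infty$ is independent of $z$, is $k$-Lipschitz with $\cH_\infty(0)=0$, and (after the standard refinement of the blow-up) is positively $1$-homogeneous. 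As a nonconstant locally uniform limit of normalized $K$-quasiconformal homeomorphisms is again one, $g$ is such a homeomorphism, and by the locally uniform convergence of both $\partial_z f_{t_n}$ and $\cH_{t_n}$ it solves the \emph{autonomous} equation $\partial_{\zbar}g=\cH_\infty(\partial_z g)$ with $\partial_z g(0)=0$.

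It then remains to show that an autonomous, positively $1$-homogeneous structure function cannot admit a quasiconformal homeomorphic solution $g$ with $\partial_z g(0)=0$, and this is the step I expect to be the main obstacle. The homogeneity makes the equation invariant under $g\mapsto g(\lambda\,\cdot)/|g(\lambda)|$, so a second blow-up stays within the same equation; and, more usefully, for an autonomous $\cH_\infty$ the complex derivative $h:=\partial_z g$ is itself $K$-quasiregular (differentiating the equation gives $\partial_{\zbar}h=a\,\partial_z h+b\,\overline{\partial_{\zbar}h}$ with $|a|+|b|\leq k$, whence $|\partial_{\zbar}h|\leq k|\partial_z h|$). Since $g$ is nonconstant, $h\not\equiv0$, so $h$ is a nonconstant $K$-quasiregular map with an isolated zero of finite local index $m\geq1$ at the origin; comparing this with the local behaviour of $g$ recovered by integrating $h$, one is driven to conclude that $g$ has local index $m+1\geq2$ at $0$, contradicting injectivity of $g$. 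The delicacy is that (H1) only makes $\cH_\infty$ Lipschitz, so $g\in C^{1,\gamma}$ but in general not $C^2$: the quasiregularity of $h$ and the passage from the order of vanishing of $h$ to the local index of $g$ must be carried out via the Stoilow factorizations of $g$ and $h$ rather than by Taylor expansion, and one has to make sure the index bookkeeping closes for every $K\geq1$ — the self-similarity supplied by the homogeneity (and the uniqueness property it entails for $1$-homogeneous $\cH_\infty$) is the tool I would use to settle that.

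Finally, granted the positivity, the quantitative bound is a short compactness argument with $\cH$ fixed. Let $\mathcal N$ be the set of normalized homeomorphic $W^{1,2}_{\loc}$-solutions of \eqref{Hqr}. Quasiconformal distortion makes $\mathcal N$ equicontinuous with a controlled inverse modulus of continuity, and the norm bound of Theorem~\ref{schauder} (with the fixed $\mathbf{H}_\alpha$) bounds $\mathcal N$ in $C^{1,\gamma}(\D(0,R))$ for each $R$, so $\mathcal N$ is precompact in $C^1_{\loc}(\C)$. Its closure consists again of normalized solutions: a nonconstant locally uniform limit of normalized $K$-quasiconformal homeomorphisms is one, and $\partial_{\zbar}f_n=\cH(z,\partial_z f_n)\to\cH(z,\partial_z f_\infty)$ because $\cH$ is continuous by (H1) and \eqref{holdercondition}. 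By the first part, $J(\,\cdot\,,f)>0$ everywhere for each $f\in\mathcal N$, so $(f,z)\mapsto J(z,f)$ is a positive continuous function on the compact set $\mathcal N\times\overline{\D(0,R)}$ and is therefore bounded below by some $c(\cH,R)>0$.
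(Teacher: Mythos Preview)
The paper does \emph{not} prove Theorem~\ref{Jac}: immediately after stating Theorem~\ref{schauder} it says that this result ``and its companion, Theorem~\ref{Jac}'' are published in the separate work \cite{ACFKJ}. So there is no in-paper proof to compare your attempt against; what follows is an assessment of your argument on its own.

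Your overall architecture --- use Theorem~\ref{schauder} to get $f\in C^{1,\gamma}_{\loc}$, then blow up at a hypothetical critical point to reduce to an \emph{autonomous} limiting equation, and finish with a compactness argument for the quantitative bound --- is the natural one and is very likely the route taken in \cite{ACFKJ}. The rescaling computation for $\cH_t$, the uniform Schauder bounds via $t^{\alpha}\mathbf{H}_\alpha\le\mathbf{H}_\alpha$, and the final compactness paragraph are all sound. The genuine gap is precisely where you flag it: the autonomous step. Three things are not settled. First, the claim that $\cH_\infty$ is positively $1$-homogeneous ``after the standard refinement of the blow-up'' is not free: along a single sequence $s_{t_n}\to0$ the limit of $s_{t_n}^{-1}\cH(0,s_{t_n}w)$ need not be homogeneous, and one must arrange a two-scale (diagonal) extraction to force $\cH_\infty(\lambda w)=\lambda\cH_\infty(w)$. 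Second, the quasiregularity of $h=\partial_z g$ relies on differentiating $\cH_\infty$, but $\cH_\infty$ is only Lipschitz; you acknowledge this, yet the cure (e.g.\ working with difference quotients $z\mapsto\big(g(z+\epsilon)-g(z)\big)/\epsilon$, which solve a linear Beltrami equation with $|\mu|+|\nu|\le k$ and pass to the limit) has to be carried out, and the formula you wrote, $\partial_{\zbar}h=a\,\partial_z h+b\,\overline{\partial_{\zbar}h}$, still needs to be solved for $\partial_{\zbar}h$ before it gives $|\partial_{\zbar}h|\le k|\partial_z h|$. Third, the ``index bookkeeping'' linking an isolated zero of $h$ of multiplicity $m\ge1$ to local degree $m+1\ge2$ of $g$ is asserted but not proved; integrating the Stoilow factorization of $h$ back to $g$ is delicate when $g$ is only $C^{1,\gamma}$, and this is exactly the nontrivial content of the theorem. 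In short: your outline is correct in spirit and your compactness half is complete, but the autonomous contradiction is still at the level of a program, not a proof.
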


In the case when $\cH$ is $\C$- or $\R$-linear this theorem  is well-known, but the proofs do not extend to the nonlinear case, hence a genuine nonlinear argument is needed. We refer to \cite{ACFKJ} for further details.

Thus under regularity of $\cH$, the field ${\cF}_{\cH}$ is {\it non-degenerate} in the sense of Definition~\ref{nondegdef} below. A topological argument (Lemma~\ref{keylemma}) then completes the proof, showing that 
that for such  fields  $a \mapsto \partial_z\phi_a(z)$ is a homeomorphism of $\C$ and, furthermore, that the field defines a unique $\cH = {\cH}_{\cF}$. Moreover, regularities of ${\cH}_{\cF}$ depend on the regularity properties of $\cF$.

\smallskip

Concerning the structure of the paper, in Section~2, we prove the basic properties of quasiconformal fields $\cF$. In Section~3 we study fields ${\cF}_{\cH}$ and, in particular, show Theorem~\ref{partialder1} and the manifold structure of ${\cF}_{\cH}$ modelled on $W^{1,2}_\loc(\C)$. In Section~4, we study the smooth structure functions $\cH$  and obtain the smoothness of ${\cF}_{\cH}$. In Section~5, we show that ${\cF}_{\cH}$ is non-degenerate (e.g., Proposition~\ref{locinj}) and give the topological argument which completes the proofs of Theorems~\ref{Uniquenessthm} and \ref{homeo}. We finish the paper by showing  how
  $\cH_{\cF}$ inherits the regularity of $\cF$.

 At the end of the paper we point out some  clear obstructions to naive generalizations of our results. However, the line of research seems very promising in several directions. For instance,
 it would be interesting to investigate what is the minimal regularity needed for structure functions so that the above results remain valid. For another example, it would be interesting to see how the geometric properties of the manifold $\cF_{\cH}$ will depend on the structure of
 $\cH$.

\smallskip

\noindent {\bf Acknowledgements.} \quad  The authors would like to thank Luis Guijarro for interesting discussions on the differential geometric interpretation of the paper.

\section{Manifolds}

\noindent
In this section, $\cF=\{\phi_a(z)\}_{a\in\C}$ will always be a field of $K$-quasiconformal mappings, satisfying (F1) and (F2) in Definition~\ref{perhe}.  Let
$\eta_K:\R_+ \to \R_+$ denote the modulus  quasisymmetry of quasiconformal maps \cite[Corollary~3.10.4]{AIM}; we can choose $\eta_K(t)= C(K)\max\{ t^{K},t^{1/K}\}$. Then (F2), the uniform quasisymmetry of the differences, 
implies that the field is bi-Lipschitz respect to the parameter $a$.

\begin{prop}\label{bilip}
Given a field $\cF=\{\phi_a(z)\}_{a\in\C}$,  $2\leq p<\frac{2K}{K-1}$ and $R>0$, it holds that 
\begin{enumerate}
\item[(a)] $a\mapsto\phi_a(z)$ is bi-Lipschitz, for every $z\in\C$. 
In fact,
$$
\frac{1}{\eta_K(1/|z|)} \leq \frac{|\phi_a(z) - \phi_b(z)|}{|a-b|} \leq \eta_K(|z|) , \quad a\neq b.
$$
\item[(b)] $a\mapsto \phi_a : \C \to L^\infty(\D(0,R))$ is bi-Lipschitz 
$$\frac{1}{\eta_K(1/R)}\leq \frac{\|\phi_a-\phi_b\|_{L^\infty(\D(0,R))}}{|a-b|}\leq \eta_K(R), \quad a\neq b.$$
\item[(c)] $a\mapsto D_z\phi_a : \C \to L^p(\D(0,R))$ is bi-Lipschitz and
$$
 \frac{c(p, K)}{\eta_K(1/R)\,R^{1-\frac2p}}\leq \frac{\|D_z\phi_a-D_z\phi_b\|_{L^p(\D(0, R))}}{|a-b|} \leq   \frac{c(p, K)\,\eta_K(R) }{R^{1-\frac2p}},\quad a\neq b.
$$
\end{enumerate}
\end{prop}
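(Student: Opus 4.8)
\noindent\emph{Sketch of proof.} The whole statement reduces to one observation: if $a\neq b$, then by (F1)--(F2) the difference $\psi:=\phi_a-\phi_b$ is a $K$-quasiconformal map of $\C$ with $\psi(0)=0$ and $\psi(1)=a-b$, so that $g:=\psi/(a-b)$ is a \emph{normalised} $K$-quasiconformal map, $g(0)=0$ and $g(1)=1$. Since each of the three quantities in (a)--(c) is homogeneous of degree one in $\psi$, the plan is to bound the corresponding quantity for $g$ between two constants depending only on $K$ (and on $p$, $R$), and then multiply back by $|a-b|$.

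For part (a) I would simply apply the $\eta_K$-quasisymmetry of the plane quasiconformal map $g$, in the form $|g(x)-g(y)|/|g(x)-g(z)|\leq\eta_K(|x-y|/|x-z|)$, to two well-chosen triples. The triple $(x,y,z)=(0,z,1)$ gives $|g(z)|=|g(0)-g(z)|/|g(0)-g(1)|\leq\eta_K(|z|)$, while the triple $(0,1,z)$ gives $1/|g(z)|=|g(0)-g(1)|/|g(0)-g(z)|\leq\eta_K(1/|z|)$; these are the two inequalities of (a). Part (b) then follows at once from $\|\phi_a-\phi_b\|_{L^\infty(\D(0,R))}=|a-b|\sup_{|z|<R}|g(z)|$: the upper bound comes from (a) and the monotonicity of $\eta_K$, and for the lower bound one notes that $t\mapsto\eta_K(1/t)^{-1}$ is increasing, so $\sup_{|z|<R}|g(z)|\geq\sup_{0<|z|<R}\eta_K(1/|z|)^{-1}=\eta_K(1/R)^{-1}$.

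Part (c) is the one that needs genuine quasiconformal theory. For the upper bound I would invoke the higher-integrability (Gehring-type reverse H\"older) estimate for quasiconformal maps, which for $2\leq p<\frac{2K}{K-1}$ takes the scale-invariant form
\[
\|D_zg\|_{L^p(\D(0,R))}\leq c(p,K)\,R^{\frac2p-1}\,\|D_zg\|_{L^2(\D(0,2R))},
\]
and then estimate the $L^2$-norm by the area of the image: the pointwise distortion inequality $|D_zg|^2\leq K\,J_g$ and the change of variables $\int_E J_g=|g(E)|$ give $\|D_zg\|_{L^2(\D(0,2R))}^2\leq K\,|g(\D(0,2R))|$, while by (a) we have $g(\D(0,2R))\subset\overline{\D(0,\eta_K(2R))}$, hence $|g(\D(0,2R))|\leq\pi\,\eta_K(2R)^2\leq c(K)\,\eta_K(R)^2$ using the elementary inequality $\eta_K(2R)\leq 2^K\eta_K(R)$. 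Assembling these and multiplying by $|a-b|$ produces the claimed upper bound. For the lower bound I would run the chain in reverse: since $p\geq2$, H\"older's inequality gives $\|D_zg\|_{L^2(\D(0,R))}\leq(\pi R^2)^{\frac12-\frac1p}\,\|D_zg\|_{L^p(\D(0,R))}$, while $|D_zg|^2\geq J_g$ and the change of variables give $\|D_zg\|_{L^2(\D(0,R))}^2\geq|g(\D(0,R))|$; and since $g$ is a homeomorphism of $\C$ fixing $0$, the bounded Jordan domain $g(\D(0,R))$ contains the disc centred at $0$ of radius $\operatorname{dist}\bigl(0,g(\partial\D(0,R))\bigr)=\inf_{|z|=R}|g(z)|\geq\eta_K(1/R)^{-1}$, so $|g(\D(0,R))|\geq\pi\,\eta_K(1/R)^{-2}$. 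Combining and multiplying by $|a-b|$ yields the lower bound in (c).

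The only non-elementary ingredient is the higher-integrability estimate used in the upper bound of (c): it is precisely what forces the restriction $p<\frac{2K}{K-1}$, and it is the single point where we need more than quasisymmetry, H\"older's inequality and the area formula. I expect this to be the main (though by now standard) obstacle; everything else is bookkeeping with the explicit form of $\eta_K$.
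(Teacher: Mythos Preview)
Your argument is correct and follows essentially the same route as the paper. Both proofs reduce everything to the normalised $K$-quasiconformal map $g=(\phi_a-\phi_b)/(a-b)$, obtain (a) from quasisymmetry, (b) immediately from (a), and for (c) sandwich the area $|g(\D(0,R))|$ between $\pi\,\eta_K(1/R)^{-2}$ and $\pi\,\eta_K(R)^2$, relate the area to $\|D_zg\|_{L^2}^2$ via the distortion inequality, and then pass between $L^2$ and $L^p$ by higher integrability. The only cosmetic difference is that the paper invokes the two-sided reverse H\"older inequality $\bigl(\fint_{\D(0,R)}|D_zg|^2\bigr)^{1/2}\simeq\bigl(\fint_{\D(0,R)}|D_zg|^p\bigr)^{1/p}$ on a single disk (\cite[Corollary 13.2.4]{AIM}), whereas you split the two directions, using a Gehring-type bound on $\D(0,2R)$ for the upper estimate and plain H\"older for the lower; this makes your lower bound slightly more elementary at the cost of an extra factor $\eta_K(2R)/\eta_K(R)\leq 2^K$ absorbed into $c(p,K)$ in the upper bound.
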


\begin{proof}
 For $a\neq b$, the map $g = \phi_a - \phi_b : \C \to \C$ is $K$-quasiconformal, by assumption (F2). Thus by $\eta_K$-quasisymmetry,
\begin{align*}
\frac{1}{\eta_K(1/|z|)}|a - b| &= \frac{1}{\eta_K(1/|z|)}|g(1) - g(0)|\leq |g(z) - g(0)| = |\phi_a(z) - \phi_b(z)|\\
&\leq \eta_K(|z|)|g(1) - g(0)| = \eta_K(|z|)|a - b|.
\end{align*}
Hence we have shown $(a)$. The statement $(b)$ is immediate.

\medskip

It remains to prove that the bi-Lipschitz property is inherited to the 
derivatives. 
By the $\eta_K$-quasisymmetry or directly from claim $(a)$,
$$\aligned\frac{\pi}{\eta_K(1/R)^2}|a - b|^2&\leq\pi\,\inf_{|z|=R}|(\phi_a - \phi_b)(z)|^2\leq |(\phi_a - \phi_b)(\D(0, R))|\\
&\leq \pi\,\sup_{|z|=R}|(\phi_a - \phi_b)(z)|^2\leq \pi\,\eta_K(R)^2|a - b|^2.\endaligned$$
In addition the $K$-quasiconformal map $\phi_a - \phi_b$  satisfies 
$$
|(\phi_a - \phi_b)(\D(0, R))| = \int_{\D(0, R)} J(z, \phi_a - \phi_b)\, dA(z),
$$
$$\aligned
\frac{1}{K}\int_{\D(0, R)} |D_z(\phi_a(z) - \phi_b(z))|^2 \,dA(z) &\leq \int_{\D(0, R)} J(z, \phi_a - \phi_b) \,dA(z)\\
&\leq \int_{\D(0, R)} |D_z(\phi_a(z) - \phi_b(z))|^2 \,dA(z),\endaligned
$$
and, see e.g. \cite[Corollary 13.2.4]{AIM}, for any $2<p<\frac{2K}{K-1}$
$$
\left(\frac{1}{|\D(0, R)|}\int_{\D(0, R)} |D_z(\phi_a - \phi_b)|^2\right)^{\frac12} \simeq \left(\frac{1}{|\D(0, R)|}\int_{\D(0, R)} |D_z(\phi_a - \phi_b)|^p\right)^{\frac1p} 
$$
with constants that depend only on $K$ and $p$. Combining these estimates one gets $(c)$.
\end{proof}

Next we prove that for almost every $a \in \C$,  at the point $\phi_a$ the field $\cF$ has a  tangent space $T_{\phi_a}\cF \subset L^\infty_{\loc}(\C)$. It turns out that $T_{\phi_a}\cF$ is a two-dimensional linear field of quasiconformal mappings. In particular, $T_{\phi_a}\cF \setminus \{0\}$ is the set of homeomorphic solutions to a linear Beltrami equation 
$$
\partial_{\zbar}f(z) = \mu_a(z)\,\partial_z f(z) + \nu_a(z)\,\overline{\partial_z f(z)} \qquad \text{a.e.}
$$
The explicit form of $\mu_a$, $\nu_a$ is given in \eqref{munu} below. 

Moreover, if the quasiconformal field $\cF$ is $C^1$-continuous with respect to the field parameter $a$ (i.e., for each fixed  $z \in \C$, the map $a \mapsto \phi_a(z)$ is $C^1$),
then  we show that $\mathcal{F}$  is a  $C^1$-embedded submanifold  of $L^\infty_{\loc}(\C)$.

We start with 

\begin{prop}\label{dif}
Assume that $\cF = \{ \phi_a(z) \}_{a\in\C}$ is a field of $K$-quasiconfor\-mal mappings.


Then for a.e. $a \in \C$ there exist $\mu_a,\nu_a\in L^\infty(\C)$ with $\||\mu_a|+|\nu_a|\|_\infty\leq \frac{K-1}{K+1} $, such that for every $e\in\C\setminus\{0\}$  the directional derivative
\begin{equation}\label{extra11}
\partial^a_e\phi_a :=\lim_{t\to 0}\frac{\phi_{a+te}-\phi_a}{t}
\end{equation}
exists and 
is the unique $K$-quasiconformal solution to the problem
$$
\begin{cases}
\partial_{\zbar}f(z) = \mu_a(z)\,\partial_z f(z) + \nu_a(z)\,\overline{\partial_zf(z)} \qquad \text{a.e.}\\
f(0)=0,\quad f(1)=e.
\end{cases}$$
The convergence in \eqref{extra11} is taken in $L^\infty_{\loc}(\C)$.
\end{prop}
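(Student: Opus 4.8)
The plan is to obtain each directional derivative as a limit of the $K$-quasiconformal difference quotients $g_t:=t^{-1}(\phi_{a+te}-\phi_a)$ and then to read off the linear Beltrami coefficients by pointwise linear algebra on the two-dimensional space these limits span. The first step, and in my view the main difficulty, is to upgrade the bi-Lipschitz estimates of Proposition~\ref{bilip} into \emph{almost everywhere differentiability} of the curve $a\mapsto\phi_a$. Fix $p\in(2,\frac{2K}{K-1})$ and $R>0$. Combining parts (b) and (c) of Proposition~\ref{bilip} (the $L^p$-bound on the maps themselves being a consequence of the $L^\infty$-bound (b)), the map $a\mapsto\phi_a$ is Lipschitz from $\C$ into the Sobolev space $W^{1,p}(\D(0,R))$. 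Since $W^{1,p}(\D(0,R))$ is reflexive for $1<p<\infty$, it has the Radon--Nikod\'ym property, so the Rademacher-type theorem for Lipschitz maps from $\R^2$ into such a Banach space shows that $a\mapsto\phi_a$ is differentiable at almost every $a\in\C$ as a $W^{1,p}(\D(0,R))$-valued map. Intersecting the null exceptional sets over $R=1,2,3,\dots$ produces a full-measure set of parameters at which $a\mapsto\phi_a$ is differentiable as a $W^{1,p}_{\loc}(\C)$-valued map; fix such an $a$ henceforth.

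At this $a$, every directional derivative $\psi_e:=\partial^a_e\phi_a=\lim_{t\to 0}g_t$ exists in $W^{1,p}_{\loc}(\C)$ and depends $\R$-linearly on $e$, this linearity being precisely the content of differentiability. Since $p>2$, Morrey's embedding promotes the convergence to $C^{0,\gamma}_{\loc}(\C)$ with $\gamma=1-2/p$, hence to $L^\infty_{\loc}(\C)$, which is the mode of convergence asserted in \eqref{extra11}. To identify $\psi_e$ as a $K$-quasiconformal map, observe that each $g_t$, $t\neq 0$, is $K$-quasiconformal by (F2) (multiplication by the constant $t^{-1}$ does not affect the dilatation) and is normalised by $g_t(0)=0$, $g_t(1)=e$; these normalisations pass to the locally uniform limit, so $\psi_e(0)=0$ and $\psi_e(1)=e$. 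Moreover $\partial_z g_t\to\partial_z\psi_e$ and $\partial_{\zbar}g_t\to\partial_{\zbar}\psi_e$ in $L^p_{\loc}(\C)$, hence a.e.\ along a subsequence, so the pointwise Beltrami inequality $|\partial_{\zbar}g_t|\leq k\,|\partial_z g_t|$ passes to $|\partial_{\zbar}\psi_e|\leq k\,|\partial_z\psi_e|$; thus $\psi_e$ is a nonconstant locally uniform limit of normalised $K$-quasiconformal maps and is therefore $K$-quasiconformal itself (see \cite{AIM}). Finally, Proposition~\ref{bilip}(a) gives $|\psi_e(z)|\geq |e|/\eta_K(1/|z|)>0$ for $z\neq 0$, so $e\mapsto\psi_e$ is injective; being also $\R$-linear, it makes $T_{\phi_a}\cF:=\{\psi_e:e\in\C\}\cup\{0\}$ a genuinely two-dimensional real vector space all of whose nonzero members are $K$-quasiconformal.

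It remains to exhibit one $\R$-linear Beltrami equation solved by every $\psi_e$. Put $f=\psi_1$, $g=\psi_i$, so that $\psi_e=e_1 f+e_2 g$ for $e=e_1+ie_2$. At almost every $z$ I would check that $\partial_z\psi_e(z)\mapsto\partial_{\zbar}\psi_e(z)$ is a well-defined $\R$-linear map on the real span of $\partial_z f(z)$ and $\partial_z g(z)$: if $e_1\partial_z f(z)+e_2\partial_z g(z)=0$ with $(e_1,e_2)\neq 0$, then $h=e_1 f+e_2 g$ is $K$-quasiregular with $\partial_z h(z)=0$, whence $\partial_{\zbar}h(z)=0$, so a vanishing value of $\partial_z\psi_e(z)$ forces a vanishing value of $\partial_{\zbar}\psi_e(z)$. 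Extending this map $\R$-linearly to $\C$ yields $\mu_a(z),\nu_a(z)\in\C$ with $w\mapsto\mu_a(z)w+\nu_a(z)\bar w$ realising it; where $\partial_z f(z),\partial_z g(z)$ already span $\C$ the coefficients are unique, and on the complementary set (where, say, $\partial_z g(z)=c\,\partial_z f(z)$ with $c\in\R$, forcing $\partial_{\zbar}g(z)=c\,\partial_{\zbar}f(z)$ too) one may simply take $\nu_a(z)=0$, $\mu_a(z)=\partial_{\zbar}f(z)/\partial_z f(z)$, or $\mu_a(z)=\nu_a(z)=0$ when $\partial_z f(z)=\partial_z g(z)=0$. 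In all cases $\mu_a,\nu_a$ are measurable, the bound $|\mu_a(z)|+|\nu_a(z)|\leq k$ is the operator norm of $w\mapsto\mu_a(z)w+\nu_a(z)\bar w$ and holds since $|\partial_{\zbar}\psi_e(z)|\leq k\,|\partial_z\psi_e(z)|$ for every $e$, and every $\psi_e$ solves $\partial_{\zbar}f=\mu_a\partial_z f+\nu_a\overline{\partial_z f}$ with $f(0)=0$, $f(1)=e$. Uniqueness of such a $K$-quasiconformal solution is the uniqueness property of $\R$-linear Beltrami equations, \cite[Corollary~6.2.4]{AIM}, and the explicit form of $\mu_a,\nu_a$ is recorded below. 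Beyond the differentiability step the only delicate point is this measurable selection on the degeneracy set; in the cleaner treatment one invokes here the Wronski-type theorems \cite{AN}, \cite{AJ} giving $\Im(\partial_z f\,\overline{\partial_z g})\neq 0$ a.e., so that the degeneracy set is null and $T_{\phi_a}\cF$ is exactly the set of homeomorphic solutions.
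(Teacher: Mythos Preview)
Your proof is correct and follows essentially the same strategy as the paper's: apply a Banach-space Rademacher theorem to $a\mapsto\phi_a$, identify the directional derivatives as normalised $K$-quasiconformal maps depending $\R$-linearly on $e$, then read off a common $\R$-linear Beltrami equation from the two generators $\psi_1,\psi_i$.

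There are two minor differences worth noting. First, the paper takes the target space to be $L^2(\D(0,R))$ and then uses a Montel-type compactness argument to upgrade the $L^2$-limit to a locally uniform one; you instead take $W^{1,p}(\D(0,R))$ with $p>2$ and invoke Morrey's embedding, which is a bit more direct and gives the $L^\infty_{\loc}$ convergence in one stroke. Second, for the coefficients $\mu_a,\nu_a$ the paper simply cites \cite[Theorem~16.6.6]{AIM} (which packages the Wronski-type argument and yields the explicit formulas \eqref{munu}), whereas you do the pointwise linear algebra by hand, including the case analysis on the degeneracy set. Your treatment there is fine once you note, as you implicitly do, that the distortion inequality $|\partial_{\zbar}\psi_e|\le k|\partial_z\psi_e|$ holds for \emph{all} $e$ at a.e.\ $z$ by continuity in $e$ from a countable dense set; and as you remark, the Wronski theorems make the degeneracy set null anyway.
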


\begin{proof} We begin by recalling a Banach space version of the Rademacher theorem, see e.g. \cite[Propositions 4.3 and  6.41]{BL}: If $\Phi: \C \to F$ is locally Lipschitz where $F$ is a Banach space with the Radon-Nikodym property, then $\Phi$ is Fr\'echet differentiable at almost every point $a \in \C$, i.e. there exists a bounded linear map $T_a:\C \to F$ such that
\begin{equation*}
\| \Phi(a+e) - \Phi(a) -T_a(e) \| = \|e\|\,o(\|e\|) \qquad \text{for $e \in \C$.}
\end{equation*}
Furthermore, every reflexive Banach space has the Radon-Nikodym property \cite[Corollary 5.12]{BL}.

We apply these facts to the map   $\Phi: a \mapsto \phi_a( \cdot )$, but to make use of the Radon-Nikodym property let us consider it as having values in $L^2(\D(0,R))$.
Then from Proposition \ref{bilip} $(b)$ we see that there is a set $E \subset \C$ with complement of zero measure such that for  every $a \in E$,
\begin{equation}\label{extra13} 
\lim_{t\to 0^+} \left\| \frac{1}{t} [\phi_{a+t e} -\phi_a] - T_a(e) \right\|_{L^2(\D(0,R))} = 0, \qquad e \in \C.
\end{equation}

On the other hand, as a function of $z$ the maps $\frac{1}{t} [\phi_{a+t e}(z)-\phi_a(z)]$ are, by our assumptions, all $K$-quasiconformal in $\C$,  sending $0$ to $0$ and $1$ to $e$. Therefore by the Montel-type theorem \cite[Theorem 3.9.4]{AIM}, we can take a subsequence $t_j \to 0$ such that $\frac{1}{t_j} [\phi_{a+t e}(z)-\phi_a(z)] \to \eta(z)$ locally uniformly in $\C$, where $\eta:\C \to \C$ is $K$-quasiconformal.  Comparing with \eqref{extra13}  we see that 
$$ \eta(z) = T_a(e)(z) \qquad {\rm almost \; everywhere}. 
$$ 
But then the limit map $\eta$ does not depend on the subsequence $\{ t_j\}$ chosen, and furthermore,  exhausting the plane with countably many disks $\D(0,R)$ we obtain that the (locally uniform in $z$) limits 
\begin{equation}\label{extra14} 
 \partial^a_e\phi_a :=\lim_{t\to 0}\frac{\phi_{a+te}-\phi_a}{t} =  T_a(e)
\end{equation}
are all $K$-quasiconformal maps of $\C$, fixing $0$ and sending $1$ to $e$, and depending {\it linearly} on the parameter $e \in \C \setminus\{0\}$.
Moreover, if $e_1, e_2\in\C$ are $\R$-linearly independent then also $\partial_{e_1}^a\phi_a, \partial_{e_2}^a\phi_a$ are $\R$-linearly independent.
 
We can now invoke
\cite[Theorem 16.6.6]{AIM} to see that for almost every $a$, i.e. for $a \in E$, there exists a unique pair of Beltrami coefficients $\mu_a$ and $\nu_a$ such that $\||\mu_a|+|\nu_a|\|_\infty\leq \frac{K-1}{K+1}$ and the equation
$$\partial_{\zbar}f(z) = \mu_a(z)\,\partial_z f(z) + \nu_a(z)\,\overline{\partial_zf(z)} \qquad \text{a.e.}$$
is satisfied by every member of the family $\{\partial^a_e\phi_a\}_{e\in\C}$. After choosing as generators $e_1=1, e_2=i$, the coefficients $\mu_a$, $\nu_a$ may be precisely described, see \cite[(16.190)]{AIM}, as
\begin{equation}\label{munu}\aligned
\mu_a(z)&=\frac{
\partial_{\zbar}(\partial_1^a\phi_a)\,\overline{\partial_z (\partial_i^a\phi_a)}-\partial_{\zbar}(\partial_i^a\phi_a)\,\overline{\partial_z (\partial_1^a\phi_a)}}{2i\,\Im(\partial_z(\partial_1^a\phi_a)\,\overline{\partial_z(\partial_i^a\phi_a)})},\\
\nu_a(z)&=\frac{\partial_z (\partial_1^a\phi_a)\,\partial_{\zbar}(\partial_i^a\phi_a)-\partial_z (\partial_i^a\phi_a)\,\partial_{\zbar}(\partial_1^a\phi_a)}{2i\,\Im(\partial_z(\partial_1^a\phi_a)\,\overline{\partial_z(\partial_i^a\phi_a)})}.
\endaligned
\end{equation}

\end{proof}
 
\begin{rem} We will later in Theorem \ref{existDaFa} see that if  $\cF = \{ \phi_a(z) \}_{a\in\C}$ arises as the set of homeomorphic solutions to a nonlinear Beltrami equation with structure function $\cH$, then $$\mu_a(z) = \partial_w\cH\big(z, \partial_z \phi_a(z)\big)\quad \text{ and } \quad \nu_a(z) = \partial_{\bar{w}}\cH\big(z, \partial_z \phi_a(z)\big).$$
Thus for a general field $\cF$, \eqref{munu} seems to determine a structure function at least infinitesimally. One may then ask if this can be made global, e.g. whether there is a counterpart for the Frobenius theorem in the setting of quasiconformal fields. 
\end{rem}

In addition,  pointwise smoothness of $a \mapsto \phi_a(z)$ 
 forces a uniform (over $z$) continuity of its derivatives.

\begin{cor}\label{corcontdif}
Let $\cF = \{ \phi_a(z) \}_{a\in\C}$ be  a  field of $K$-quasiconformal mappings. Then  $a \mapsto \phi_a$ is Fr\'echet differentiable at almost every $a \in \C$, as  a map  $\C \to L^{\infty}_\loc(\C)$. 

Moreover, if   for each fixed $z\in\C$, $a \mapsto \phi_a(z)$  is $C^1$, then $a \mapsto \phi_a$ is continuously Fr\'echet differentiable  $\C \to L^{\infty}_\loc(\C)$ at every $a \in \C$.
\end{cor}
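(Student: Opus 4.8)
\textbf{Proof plan for Corollary~\ref{corcontdif}.}

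The first claim is essentially a restatement of what was established in Proposition~\ref{dif}: the Rademacher-type theorem for Banach-valued Lipschitz maps, combined with the bi-Lipschitz estimate of Proposition~\ref{bilip}(b), gives Fr\'echet differentiability of $a \mapsto \phi_a$ into $L^2(\D(0,R))$ at a.e.\ $a$, and then the Montel-type compactness argument in the proof of Proposition~\ref{dif} upgrades the limits $\frac1t[\phi_{a+te} - \phi_a]$ to locally uniform (hence $L^\infty_\loc$) convergence to the $K$-quasiconformal map $T_a(e)$. So the plan for part one is just to cite Proposition~\ref{dif} and note that locally uniform convergence of the difference quotients, which holds there, is exactly $L^\infty_\loc$-Fr\'echet differentiability once one checks that the $o(\|e\|)$-control from the $L^2$-differentiability transfers: given $e_j \to 0$, the maps $\|e_j\|^{-1}(\phi_{a+e_j} - \phi_a - T_a(e_j))$ are $K$-quasiregular differences vanishing at $0$ and $1$, so any locally uniform subsequential limit is $K$-quasiregular, agrees a.e.\ with the $L^2$-limit $0$, hence is $0$; therefore the whole family converges to $0$ locally uniformly.

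For the second, main claim, I would fix $a_0 \in \C$ and seek to show $a \mapsto T_a \in \mathcal{L}(\C, L^\infty_\loc(\C))$ is continuous at $a_0$, where $T_a(e) = \partial^a_e\phi_a$ is the derivative from Proposition~\ref{dif}. First note that once $a \mapsto \phi_a(z)$ is $C^1$ for every fixed $z$, the set $E$ of full measure in Proposition~\ref{dif} is all of $\C$: indeed the pointwise directional derivative $\partial^a_e\phi_a(z)$ exists for every $a, z, e$, it is linear in $e$ for fixed $(a,z)$ because $a \mapsto \phi_a(z)$ is $C^1$ into $\C$, and the Montel argument of Proposition~\ref{dif} still produces a $K$-quasiconformal representative $T_a(e)$ with $T_a(e)(0)=0$, $T_a(e)(1)=e$. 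Then the continuity of $a \mapsto T_a$ should follow from a normal-families argument: take $a_n \to a_0$ and a candidate direction $e$ with $|e|=1$. The maps $z \mapsto T_{a_n}(e)(z)$ form a normal family of $K$-quasiconformal maps fixing $0$ and sending $1$ to $e$, so any subsequence has a locally uniformly convergent sub-subsequence with $K$-quasiconformal limit $g$. It remains to identify $g = T_{a_0}(e)$. For this I would use the pointwise $C^1$ hypothesis: for each fixed $z$, $\partial_e\phi_{a_n}(z) \to \partial_e\phi_{a_0}(z)$ as $n \to \infty$? This is \emph{not} automatic from $C^1$ at each point unless the dependence is jointly continuous, so the cleaner route is to differentiate the defining relation. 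Since $a \mapsto \phi_a(z)$ is $C^1$, for fixed $z$ and $e$ we have $\phi_{a_n + te}(z) - \phi_{a_n}(z) = \int_0^t \partial_e\phi_{a_n + se}(z)\,ds$; combined with locally uniform convergence $\phi_{a_n} \to \phi_{a_0}$ (which follows from Proposition~\ref{bilip}(b) and $a_n \to a_0$) and the uniform $K$-quasiconformality, one extracts that the locally uniform limit $g$ of $T_{a_n}(e)$ satisfies $\phi_{a_0 + te}(z) - \phi_{a_0}(z) = \int_0^t \tilde g_s(z)\,ds$ where $\tilde g_s$ is the corresponding limit along direction $s$; by uniqueness of the derivative of the $C^1$ map $s \mapsto \phi_{a_0+se}(z)$ this forces $g = T_{a_0}(e)$. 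Hence every subsequence of $T_{a_n}(e)$ has a sub-subsequence converging locally uniformly to $T_{a_0}(e)$, so $T_{a_n}(e) \to T_{a_0}(e)$ in $L^\infty_\loc$; linearity in $e$ then gives $\|T_{a_n} - T_{a_0}\|_{\mathcal{L}(\C, L^\infty(\D(0,R)))} \to 0$ for each $R$, which is continuity of the derivative.

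The main obstacle, as flagged above, is bridging pointwise-in-$z$ $C^1$ regularity in $a$ to the locally-\emph{uniform}-in-$z$ convergence of difference quotients and of the derivatives $T_{a_n}(e) \to T_{a_0}(e)$: the $C^1$ hypothesis is imposed only pointwise, so one cannot directly invoke joint continuity, and the compactness (Montel) theorem only gives subsequential limits that must then be pinned down. The integral representation $\phi_{a+te}(z) - \phi_a(z) = \int_0^t \partial_e\phi_{a+se}(z)\,ds$ together with the equicontinuity/normality coming from uniform $K$-quasiconformality is the right tool to close this gap, and care is needed to make the identification of limits uniform over $|e| = 1$ so as to conclude operator-norm continuity rather than merely strong continuity (though for a $2$-dimensional parameter space strong and operator-norm continuity coincide after a compactness argument on the unit circle in $\C$).
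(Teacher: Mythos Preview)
Your argument for the first claim has a genuine gap: the maps $\|e_j\|^{-1}\bigl(\phi_{a+e_j}-\phi_a-T_a(e_j)\bigr)$ are \emph{not} $K$-quasiregular. The axiom (F2) tells you that $\phi_{a+e_j}-\phi_a$ is $K$-quasiconformal, and $T_a(e_j)$ is $K$-quasiconformal as a limit of such differences, but nothing in Definition~\ref{perhe} makes the difference of two $K$-quasiconformal maps quasiregular (this would require a common Beltrami equation, which a general field does not possess). The paper avoids this entirely: once \eqref{extra14} shows that $a\mapsto\phi_a$ is Gateaux differentiable into $L^\infty_\loc(\C)$ with a linear derivative, the bi-Lipschitz bound of Proposition~\ref{bilip}(b) and the finite-dimensionality of the domain give Fr\'echet differentiability directly (citing \cite[p.~84]{BL}). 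You should use this route.

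For the second claim you overcomplicate the argument by doubting a fact that is immediate from the hypothesis. You write that ``$\partial_e\phi_{a_n}(z)\to\partial_e\phi_{a_0}(z)$ \ldots\ is not automatic from $C^1$ at each point unless the dependence is jointly continuous''---but it \emph{is} automatic. Saying that $a\mapsto\phi_a(z)$ is $C^1$ for fixed $z$ means precisely that $a\mapsto D_a\phi_a(z)$ is continuous, hence $\partial_e\phi_{a_n}(z)\to\partial_e\phi_{a_0}(z)$ pointwise in $z$ as $a_n\to a_0$. The paper then finishes in one line: the $\partial_e\phi_{a_n}$ are $K$-quasiconformal maps with $0\mapsto 0$, $1\mapsto e$, and a pointwise-convergent sequence of $K$-quasiconformal maps converges locally uniformly \cite[Corollary~3.9.3]{AIM}. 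Your integral-representation detour is unnecessary, and because of the erroneous dismissal above, your identification step for the subsequential limit $g$ is left hanging on a more delicate argument than is actually needed.
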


\begin{proof} Since \eqref{extra14} implies that $\partial_e^a\phi_a(z)$ depends linearly on $e$, the convergence in \eqref{extra14} shows (by definition) that 
\begin{equation*}
a \mapsto \phi_a : \C \to L^{\infty}_{\loc}(\C)
\end{equation*}
is Gateaux differentiable almost everywhere in the $a$-variable. Further, since $a \mapsto \phi_a$ is also bi-Lipschitz by Proposition~\ref{bilip} $(b)$, the Fr\'echet differentiability follows from \cite[p. 84]{BL}.

Assume finally that  for each fixed $z\in\C$, $a \mapsto \phi_a(z)$  is $C^1$.  Then the limit 
$$  \partial^a_e\phi_a(z) :=\lim_{t\to 0}\frac{\phi_{a+te}(z) -\phi_a(z)}{t}
$$
exists for every $a, z \in \C$ and for every $e \in \C \setminus\{0\}$.
Thus the  pointwise differentials $D_a\phi_a(z)\,h  := \partial_1^a\phi_a(z)\,h_1 + \partial_i^a\phi_a(z)\,ih_2$ for $h = h_1 + ih_2 \in \C$ now exist for every $a$, and it remains to show the continuity of  $a \mapsto D_a\phi_a : \C \to L(\C,  L^\infty_\loc(\C))$. 
 Let, for $e = 1, i$, 
$$ g_a(z) =  \partial_{e}^a\phi_a(z). 
$$
Then each $g_a$ is a $K$-quasiconformal map of $\C$ with $g_a(0) = 0$ and $ g_a(1) = e$. That $a \mapsto \phi_a(z)$  is $C^1$ means precisely that 
$$ \lim_{b \to a} g_b(z) = g_a(z) \qquad {\rm pointwise \; in } \; \C. 
$$
But if a sequence of $K$-quasiconformal maps converges pointwise, then \cite[Corollary 3.9.3]{AIM} it converges locally uniformly. In other words, 
\begin{equation*}
\lim_{b \to a}\|  \partial_{e}^a\phi_a -  \partial_{e}^a\phi_b \|_{L^\infty(\D(0, R))}= 0, \qquad 0 < R < \infty.
\end{equation*}
\end{proof}

\begin{prop}\label{mani} 
Assume that $\cF = \{ \phi_a(z) \}_{a\in\C}$ is a field of $K$-quasiconfor\-mal mappings that is $C^1$-continuous with respect to  $a$, i.e., $a \mapsto \phi_a(z) \in C^1(\C)$ for every fixed  $z \in \C$.  Then  $\mathcal{F}$  is a  $C^1$-embedded submanifold  of $L^\infty_{\loc}(\C)$.
\end{prop}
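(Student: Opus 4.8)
The plan is to exhibit a single chart that realizes $\cF$ as a $C^1$-embedded submanifold of the Fr\'echet space $L^\infty_\loc(\C)$ in the sense of Lang \cite{Lang}. The natural parametrization is the map $\Psi\colon\C\to L^\infty_\loc(\C)$, $\Psi(a)=\phi_a$. By Corollary~\ref{corcontdif}, the $C^1$-continuity hypothesis on $a\mapsto\phi_a(z)$ upgrades to $C^1$-differentiability of $\Psi$ as a map into $L^\infty_\loc(\C)$ at \emph{every} point $a$, with differential $D_a\Psi(a)h=\partial_1^a\phi_a\,h_1+\partial_i^a\phi_a\,h_2$ for $h=h_1+ih_2$. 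First I would record that $\Psi$ is a homeomorphism onto its image: injectivity and the bi-Lipschitz bounds in both directions come from Proposition~\ref{bilip}(b), which also shows that the inverse $\phi_a\mapsto a$ (read off, say, as the value at $z=1$) is continuous, so $\Psi$ is a topological embedding. It remains to check that $\Psi$ is an \emph{immersion with split image}, i.e.\ that at each $a$ the differential $D_a\Psi(a)\colon\C\to L^\infty_\loc(\C)$ is injective with closed, complemented range, and that locally $\cF$ looks like a graph over this two-dimensional range.

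The key step is the injectivity and splitting of the differential. The two vectors $v_1=\partial_1^a\phi_a$ and $v_2=\partial_i^a\phi_a$ are, by Proposition~\ref{dif}, $K$-quasiconformal maps fixing $0$ and sending $1$ to $1$ and $i$ respectively; in particular they solve a common $\R$-linear Beltrami equation and, being the two generators, they are $\R$-linearly independent as functions. Hence $D_a\Psi(a)$ is injective, and its range $V_a=\mathrm{span}_\R\{v_1,v_2\}$ is a two-dimensional subspace of $L^\infty_\loc(\C)$, automatically closed. To complement it I would use the evaluation functionals $\ell_z(g)=g(z)$ (continuous on $L^\infty_\loc(\C)$ for each fixed $z$, since convergence in $L^\infty_\loc$ of quasiconformal maps is locally uniform — here one stays within quasiconformal maps, or simply picks Lebesgue points): choosing $z=1$ we get $\ell_1(v_1)=1,\ \ell_1(v_2)=i$, so $\ell_1|_{V_a}\colon V_a\to\C$ is an $\R$-linear isomorphism, and $\{g:\ell_1(g)=0\}$ is a closed complement. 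This produces the required topological direct sum decomposition $L^\infty_\loc(\C)=V_a\oplus W_a$.

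With these ingredients the chart is built in the standard way. Near $\phi_a$ one writes any $\phi_b\in\cF$ as $\phi_b=\Psi(b)$ and uses the continuous map $b\mapsto\ell_1(\phi_b)=b$: this is a homeomorphism of a neighbourhood of $a$ in $\C$ onto a neighbourhood of $a$ in $\C\cong V_a$, so the composition $\Psi\circ(\ell_1|_{V_a})^{-1}$ is a local $C^1$-parametrization whose image is $\cF\cap U$ for a suitable open $U\subset L^\infty_\loc(\C)$, and whose inverse is the restriction of the continuous linear projection onto $V_a$. By Lang's definition \cite{Lang} of submanifold (the transition to the model situation $V_a\times W_a$ with $\cF$ corresponding to $V_a\times\{0\}$) this exhibits $\cF$ as a $C^1$-embedded submanifold modelled on $\R^2$ inside $L^\infty_\loc(\C)$; the $C^1$-compatibility of overlapping charts for different base points follows from the $C^1$-dependence of $v_1,v_2$ on $a$, itself a consequence of Corollary~\ref{corcontdif}. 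I expect the main obstacle to be the bookkeeping around the topology of $L^\infty_\loc(\C)$ as a Fr\'echet space: verifying that the evaluation-at-$1$ functional is continuous and that the projections and the graph neighbourhood $U$ are genuinely open in the Fr\'echet topology (not merely in some finer norm), and matching all this to Lang's axioms; the analytic input — linear independence of $v_1,v_2$ and the bi-Lipschitz estimates — is already supplied by Propositions~\ref{bilip} and \ref{dif} and Corollary~\ref{corcontdif}.
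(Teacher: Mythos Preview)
Your proposal is correct and follows essentially the same route as the paper: use $\Psi(a)=\phi_a$, invoke Proposition~\ref{bilip}(b) for the topological embedding, Corollary~\ref{corcontdif} for $C^1$-regularity, and the fact that $\partial_e^a\phi_a$ is quasiconformal with $\partial_e^a\phi_a(1)=e$ for injectivity of the differential.

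The only point worth flagging is your explicit construction of a complement via the evaluation functional $\ell_1$. As you yourself suspect, point evaluation is \emph{not} a continuous linear functional on $L^\infty_\loc(\C)$ (elements are equivalence classes), so this particular splitting does not work as written. The paper sidesteps the issue entirely: since the range of $D_a\Psi(a)$ is two-dimensional, it is automatically closed and complemented in any locally convex space by Hahn--Banach, and Lang's criterion \cite[Proposition~2.3, p.~29]{Lang} for an immersion is then satisfied without any explicit projection. Your subsequent chart construction and the discussion of $C^1$-compatibility of overlapping charts are therefore unnecessary; once one has a $C^1$ topological embedding whose differential is injective with split (here, finite-dimensional) image, Lang's framework delivers the embedded-submanifold conclusion directly.
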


\begin{proof}

We follow the setup and  definitions of submanifolds of Fr\'echet spaces from  \cite[Chapter II]{Lang}. According to this,  we need to show that   
 $a \mapsto \phi_a$ is a topological embedding and an $C^1$-immersion. In fact, the first claim that the map is 
  a homeomorphism onto its image follows from Proposition~\ref{bilip} $(b)$, while Corollary~\ref{corcontdif} shows that
$a \mapsto \phi_a \in C^1(\C, L^\infty_{\loc}(\C))$.

As for the immersion one needs to show \cite[Proposition~2.3, p. 29]{Lang}  
 that the differential $D_a\phi_a$ is injective and it splits  (see  \cite[p. 18]{Lang} for a definition of splitting in this setting). 
 But by \eqref{extra14}, for $e \neq 0$, the image  $D_a\phi_a \, e = \partial^a_{e}\phi_a$ 
 is a quasiconformal mapping sending $0 \mapsto 0$ and $1 \mapsto e$. Hence the kernel of $D_a\phi_a$ is $\{ 0 \}$, which shows the injectivity of the differential $D_a\phi_a: T_a\C \to T_{\phi_a} L^\infty_{\loc}(\C)$.
Since $D_a\phi_a : \C \to L^\infty_\loc(\C)$ is isomorphism onto its image, its (two dimensional) range is complemented and $D_a\phi_a$ splits.
\end{proof}

\begin{rem}\label{bundleref}
Combining Proposition~\ref{dif} with Proposition~\ref{mani}, we see that the tangent space $T_{\phi_a}\cF$ at a given point $\phi_a$ is a two-dimensional field of quasiconformal mappings and they span the two-dimensional field of homeomorphic solutions to a linear Beltrami equation 
$$
\partial_{\zbar}f(z) = \mu_a(z)\,\partial_z f(z) + \nu_a(z)\,\overline{\partial_z f(z)} \qquad \text{a.e.}
$$
\end{rem}

\section{$\cH$-equations}

\begin{prop}\label{differences}
The Beltrami equation  \eqref{Hqr} has the uniqueness property if and only if for every pair of quasiconformal solutions $f$ and $g$ to  \eqref{Hqr} the difference $f-g$ is either quasiconformal or constant.
\end{prop}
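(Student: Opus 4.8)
The plan is to prove both implications directly, using only (H1), the translation invariance of the solution set of \eqref{Hqr}, and Stoilow factorization; no deep input is needed. First I would record three elementary facts. (i) Every homeomorphic $W^{1,2}_{\loc}(\C)$-solution of \eqref{Hqr} is $K$-quasiconformal: since $\cH(z,0)\equiv 0$, (H1) gives $|\partial_{\zbar}f|=|\cH(z,\partial_z f)|\leq k\,|\partial_z f|$ a.e., so $f$ is $K$-quasiregular, hence $K$-quasiconformal; thus the quasiconformal solutions of the statement are exactly the homeomorphic ones. (ii) By \eqref{Hqrextra}, the difference $h:=f-g$ of any two $W^{1,2}_{\loc}(\C)$-solutions is $K$-quasiregular. (iii) If $g$ solves \eqref{Hqr} and $c\in\C$, then $g+c$ solves \eqref{Hqr} as well, since it has the same first order partials as $g$. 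The existence of a homeomorphic solution of \eqref{Hqr} with prescribed values at two given points is part of the standard existence theory under (H1)--(H2) (see \cite{Boj74,iw,boj,AIM}), so the content of the proposition is the equivalence of the two uniqueness assertions.

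For the implication from the difference condition to the uniqueness property, suppose every difference of quasiconformal solutions is quasiconformal or constant, and let $f,g$ be homeomorphic solutions with $f(z_0)=g(z_0)=\omega_0$, $f(z_1)=g(z_1)=\omega_1$, where $z_0\neq z_1$ and $\omega_0\neq\omega_1$. Then $h=f-g$ vanishes at both $z_0$ and $z_1$. By hypothesis $h$ is quasiconformal or constant; a quasiconformal map is injective, so $h(z_0)=0=h(z_1)$ with $z_0\neq z_1$ is impossible unless $h$ is constant, i.e. $h\equiv 0$ and $f=g$. Together with the existence recalled above, this is precisely the uniqueness property of Definition~\ref{uniqueness}.

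For the converse, assume \eqref{Hqr} has the uniqueness property and let $f,g$ be quasiconformal solutions with $h=f-g$ non-constant; I must show $h$ is quasiconformal. By (ii) and Stoilow factorization (\cite{AIM}), write $h=\psi\circ\chi$ with $\chi\colon\C\to\C$ a $K$-quasiconformal homeomorphism and $\psi$ a non-constant entire function; since an injective entire function is affine, $h$ will be a $K$-quasiconformal homeomorphism of $\C$ once I know $\psi$, equivalently $h$, is injective. Suppose not: $h(z_0)=h(z_1)$ for some $z_0\neq z_1$, so $c_0:=f(z_0)-g(z_0)=f(z_1)-g(z_1)$. Put $\tilde g:=g+c_0$. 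By (iii), $\tilde g$ is again a homeomorphic solution of \eqref{Hqr}, and $\tilde g(z_j)=f(z_j)$ for $j=0,1$; moreover $\tilde g\neq f$, for otherwise $h=f-g\equiv c_0$ would be constant, contrary to assumption. Finally $f(z_0)\neq f(z_1)$ because $f$ is injective. Hence $f$ and $\tilde g$ are two distinct homeomorphic $W^{1,2}_{\loc}(\C)$-solutions of \eqref{Hqr} taking the same pair of distinct values at the distinct points $z_0$ and $z_1$, contradicting the uniqueness property. Therefore $h$ is injective and thus quasiconformal.

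The whole argument is soft, and I expect the only delicate point to be the step ``injective $K$-quasiregular self-map of $\C$ $\Rightarrow$ $K$-quasiconformal homeomorphism of $\C$'', in particular surjectivity onto $\C$; this I would handle exactly as above, factoring through a global quasiconformal homeomorphism and invoking the rigidity of injective entire maps. The one genuinely useful trick is the replacement of $g$ by $\tilde g=g+c_0$, which turns a failure of injectivity of the difference into a violation of two-point uniqueness.
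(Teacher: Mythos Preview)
Your proof is correct and follows essentially the same route as the paper: in both directions the key move is exactly your ``useful trick'' of replacing $g$ by $g+c_0$ to convert a failure of injectivity of $f-g$ into two distinct homeomorphic solutions agreeing at two points. The only difference is cosmetic: the paper is terser and does not spell out the passage from ``injective $K$-quasiregular on $\C$'' to ``$K$-quasiconformal homeomorphism of $\C$'', whereas you justify surjectivity explicitly via Stoilow factorization.
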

\begin{proof} Given two  homeomorphic solutions  $f$ and $g$ to \eqref{Hqr} the difference $f-g$ is quasiregular, see \eqref{Hqrextra}. 

Suppose that \eqref{Hqr} has the uniqueness property and $f-g$ is not injective. Then
 there exist $z_0, z_1 \in \C$ with $(f-g)(z_0) = (f - g)(z_1) =: \gamma$, so that $f$ and $g + \gamma$ are homeomorphic solutions which take the same values at $z_0$ and $z_1$. Thus by the uniqueness property, $f-g$ is constant.

Conversely, assume that the quasiconformal solutions $f$, $g$ to \eqref{Hqr} attain the same values at $z_0$ and $z_1$. 
Then $(f-g)(z_0)= (f-g)(z_1)=0$ so that $f-g$ is not homeomorphic. Under the condition of our Proposition $f-g$ is thus constant, and so $f \equiv g$.
\end{proof}

\begin{rem}
The uniqueness property asks that no two points are in a special role, unlike in the uniqueness of the normalized homeomorphic solution (mapping 0 to 0 and 1 to 1). The uniqueness property is equivalent with the fact that the uniqueness of the normalized solution to the Beltrami equation \eqref{Hqr} is preserved under the pre- and the post-composition with similarities. Actually, it follows from the counterexample in \cite{ACFJS} that the uniqueness of the normalized solution is a truly weaker feature than the uniqueness property.
\end{rem}

If a field $\cF_\cH=\{\phi_a\}_{a\in\C}$ of $K$-quasiconformal maps arises as solutions to an equation \eqref{Hqr}, with the corresponding structure function $\cH$, it is natural to describe the tangent planes $T_{\phi_a}\cF$ in terms of $\cH$. For this we need to require some extra smoothness of $\cH$, namely the continuous differentiability in the gradient variable. 


\begin{thm}[Theorem~\ref{partialder1}]\label{existDaFa}
Assume that $\cH$ has the uniqueness property. Let  $w \mapsto \cH(z, w) \in C^1(\C)$ for every fixed $z\in \C$ and   $\cF_\cH=\{\phi_a (z)\}_{a\in\C}$.  Then for every $a \in \C$ the directional derivatives 
$$\partial_{e}^a \phi_a(z) := \lim_{ t \to 0^+} \frac{\phi_{a+te}(z) - \phi_a(z)}{t}, \qquad e \in \C\setminus \{0\},$$ 
exist and  define quasiconformal mappings of $z$, all  satisfying the same $\R$-linear  Beltrami equation  
\begin{equation}\label{rlineara}
\partial_{\zbar}f(z) = \mu_a(z)\,\partial_z f(z) + \nu_a(z)\,\overline{\partial_z f(z)} \qquad \text{a.e.}
\end{equation}
where 
\begin{equation*}
\mu_a(z) = \partial_w\cH\big(z, \partial_z \phi_a(z)\big)\quad \text{ and } \quad \nu_a(z) = \partial_{\bar{w}}\cH\big(z, \partial_z \phi_a(z)\big).
\end{equation*}
Furthermore, $\partial_{e}^a \phi_a$ is the unique quasiconformal solution to \eqref{rlineara} that fixes $0$ and maps $1$ to $e$.
\end{thm}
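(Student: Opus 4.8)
\emph{Proof proposal.} The plan is to show that the difference quotients $g_t(z):=t^{-1}\big(\phi_{a+te}(z)-\phi_a(z)\big)$, $t>0$, converge locally uniformly as $t\to0^+$; the strategy is to identify every subsequential limit as a normalized quasiconformal solution of the linearized equation \eqref{rlineara} and then invoke the uniqueness property of linear Beltrami equations to conclude the limit is unique. Fix $a,e\in\C$ with $e\neq0$. Since $\cH$ has the uniqueness property, $\cF_\cH$ is a field of $K$-quasiconformal mappings (Proposition~\ref{differences} supplies (F2)), so for small $t>0$ each $g_t$ is $K$-quasiconformal with $g_t(0)=0$ and $g_t(1)=e$. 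By the Montel-type normality theorem \cite[Theorem~3.9.4]{AIM}, $\{g_t\}_{t>0}$ is relatively compact in the topology of local uniform convergence; any subsequential limit $\eta=\lim_j g_{t_j}$ ($t_j\to0^+$) is $K$-quasiconformal with $\eta(0)=0$, $\eta(1)=e$, and, the $g_{t_j}$ being bounded in $W^{1,2}_{\loc}(\C)$, one has $Dg_{t_j}\rightharpoonup D\eta$ weakly in $L^2_{\loc}(\C)$.

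The crux is that any such $\eta$ solves \eqref{rlineara}. From \eqref{Hqr} and the fundamental theorem of calculus along the segment joining $\partial_z\phi_a(z)$ to $\partial_z\phi_{a+te}(z)$, using that $w\mapsto\cH(z,w)$ is $C^1$, one obtains for a.e.\ $z$
$$
\partial_{\zbar}g_t(z) = \int_0^1\Big[\mu_t^s(z)\,\partial_z g_t(z) + \nu_t^s(z)\,\overline{\partial_z g_t(z)}\Big]\,ds,
$$
where $\mu_t^s(z)=\partial_w\cH\big(z,\partial_z\phi_a(z)+s(\partial_z\phi_{a+te}(z)-\partial_z\phi_a(z))\big)$ and $\nu_t^s$ is defined analogously with $\partial_{\bar w}\cH$; by (H1), $|\mu_t^s|+|\nu_t^s|\leq k$ everywhere. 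By Proposition~\ref{bilip}(c), $D_z\phi_{a+te}\to D_z\phi_a$ in $L^2_{\loc}(\C)$ as $t\to0$, so after passing to a further subsequence of $\{t_j\}$ we may assume $\partial_z\phi_{a+t_je}\to\partial_z\phi_a$ a.e.; the continuity of $w\mapsto D_w\cH(z,w)$ then forces $\mu_{t_j}^s\to\mu_a$ and $\nu_{t_j}^s\to\nu_a$ a.e., uniformly in $s\in[0,1]$, where $\mu_a,\nu_a$ are as in the statement (they are Carath\'eodory in $z$, hence measurable). Testing the displayed identity against $\varphi\in C_c^\infty(\C)$, using Fubini in $(z,s)$, dominated convergence in $s$, and the elementary fact that the product of a bounded a.e.-convergent sequence with a weakly $L^2_{\loc}$-convergent sequence converges weakly to the product of the limits, I would let $t_j\to0$ to obtain $\partial_{\zbar}\eta=\mu_a\,\partial_z\eta+\nu_a\,\overline{\partial_z\eta}$ a.e. Thus $\eta$ is a $K$-quasiconformal solution of \eqref{rlineara} fixing $0$ and sending $1$ to $e$.

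To finish, since the $\R$-linear Beltrami equation \eqref{rlineara} has the uniqueness property \cite[Corollary~6.2.4]{AIM}, this normalized $\eta$ is unique; consequently every subsequential limit of $\{g_t\}_{t>0}$ coincides with it, and by relative compactness $g_t$ converges to it locally uniformly, hence in $L^\infty_{\loc}(\C)$, as $t\to0^+$. That limit is the directional derivative $\partial^a_e\phi_a$; it is $K$-quasiconformal, it solves \eqref{rlineara} with coefficients $\mu_a,\nu_a$ that plainly do not depend on $e$, and its uniqueness as a normalized quasiconformal solution of \eqref{rlineara} is precisely the linear uniqueness property just used. The main obstacle is the limit passage in the linearized identity: one must move $t_j\to0$ inside the nonlinear argument of $D_w\cH(z,\cdot)$, and this succeeds only because Proposition~\ref{bilip}(c) upgrades the bi-Lipschitz dependence $a\mapsto\phi_a$ into \emph{strong} $L^2_{\loc}$ convergence of the gradients $D_z\phi_{a+te}$; mere weak convergence would not permit passing the limit through $D_w\cH$.
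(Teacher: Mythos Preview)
Your proof is correct and follows the same overall strategy as the paper: use normality of the difference quotients to extract subsequential limits, identify every such limit as a normalized solution of the linearized equation \eqref{rlineara}, and invoke uniqueness for $\R$-linear Beltrami equations to conclude.

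The only substantive difference lies in how you handle the passage to the limit in the nonlinear term. The paper writes
\[
\partial_{\zbar}\eta^e_{t_j}=\mu_a\,\partial_z\eta^e_{t_j}+\nu_a\,\overline{\partial_z\eta^e_{t_j}}+h_{t_j},
\]
with $h_{t_j}$ the first-order Taylor remainder of $\cH(z,\cdot)$ at $\partial_z\phi_a(z)$, and then runs a Borel--Cantelli type argument (choosing a sparse subsequence so that $t_j\,\partial_z\eta^e_{t_j}\to 0$ a.e.) to show $h_{t_j}\to 0$ weakly. You instead use the integral form of the mean value theorem to write the equation with $s$-dependent coefficients $\mu^s_t,\nu^s_t$, observe via Proposition~\ref{bilip}(c) that $D_z\phi_{a+t_je}\to D_z\phi_a$ strongly in $L^2_{\loc}$ (hence a.e.\ along a further subsequence), and then apply the standard lemma that a bounded a.e.-convergent sequence times a weakly $L^2$-convergent sequence converges weakly to the product of the limits. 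Both arguments rest on the same ingredients (the bi-Lipschitz estimate of Proposition~\ref{bilip}(c) and continuity of $w\mapsto D_w\cH(z,w)$); your packaging is somewhat cleaner and avoids the explicit Borel--Cantelli step. One thing the paper's formulation buys is the quantitative estimate \eqref{extra16}, namely $h_{t_j}\to 0$ in $L^s_{\loc}$ for every $s<\tfrac{2K}{K-1}$, which is reused in Corollary~\ref{FrechetW12} to upgrade the convergence to $W^{1,2}_{\loc}$; your argument would need a small additional step to recover that.
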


\begin{proof} 
Let us fix $a\in\C$, and a direction $e\in\C\setminus\{0\}$, and denote 
$$
\eta^e_t=\frac{\phi_{a+ te}-\phi_a}{t}, \qquad t \in (0, \infty).
$$ 
Since $\cH$ has the uniqueness property, the mappings $\eta^e_t$ are $K$-quasiconformal in $\C$, and they map $0$ to $0$ and $1$ to $e$. Thus, the limit
$$
\eta^e=\lim_{t_j\to 0^+}\eta^ e_{t_j}
$$
exists, at least for a subsequence $t_j$, and it is a $K$-quasiconformal homeomorphism mapping $0$ to itself and $1$ to $e$, see the Montel-type theorem \cite[Theorem 3.9.4]{AIM}.  \label{kqcfamily} Moreover, the limit is taken locally uniformly in $z$, so that also the derivatives converge weakly in $L^p_{\loc}(\C)$ for $p\in \left[2, \frac{2K}{K-1}\right)$. We will show that $\eta^e$ solves an $\R$-linear Beltrami equation with coefficients given by $\mu_a$ and $\nu_a$, and deduce that for any subsequence $t_j$ the limit mapping $\eta^e$ must be the same.

\medskip

We fix the converging subsequence $t_j$. As quasiconformal mappings, $\phi_a$ and $\eta^e_{t_j}$ are differentiable almost everywhere. Since $t_j$ is countable, we have a set of full measure $E$ such that the derivatives of $\phi_a$ and $\eta^e_{t_j}$ exist at any point $z \in E$ and are nonzero (let us remind that quasiconformal mappings have a non-vanishing $\partial_z$-derivative almost everywhere).

Now, fix one such point $z\in E$. By assumption, $w \mapsto \cH(z, w)$ is $C^1$, so the complex partial derivatives $\partial_w \cH(z,\cdot)$, $\partial_{\bar{w}}\cH(z,\cdot)$ are defined at the point $w_0=\partial_z \phi_a(z)$. Hence we can write
\begin{equation}\label{nonhomogeneousequation}
\aligned
\partial_{\zbar}\eta^e_{t_j}(z) &= \frac{\cH\big(z,\partial_z \phi_{a+ t_j e}(z)\big)-\cH\big(z,\partial_z \phi_a(z)\big)}{t_j}\\
&=\frac{\cH\big(z,w_0+t_j\,\partial_z \eta^e_{t_j}(z)\big)-\cH\big(z,w_0\big)}{t_j}\\
&=\mu_a(z)\,\partial_z \eta^e_{t_j}(z)+\nu_a(z)\,\overline{\partial_z \eta^e_{t_j}(z)} + h_{t_j}(z),
\endaligned
\end{equation}
where 
\begin{equation}\label{muanua}
\aligned
\mu_a(z) &= \partial_w\cH\big(z, w_0\big)= \partial_w\cH\big(z, \partial_z \phi_a(z)\big),\\
\nu_a(z) &=  \partial_{\bar{w}}\cH\big(z, w_0\big)=\partial_{\bar{w}}\cH\big(z, \partial_z \phi_a(z)\big),
\endaligned
\end{equation}
and
\begin{equation}\label{extra15}
\aligned
h_{t_j}(z) &= \frac{\cH\big(z,w_0+t_j\,\partial_z \eta^e_{t_j}(z)\big) - \cH\big(z, w_0\big)}{t_j}\\
&\quad-\frac{\partial_w \cH\big(z,w_0\big)\,t_j\,\partial_z \eta^e_{t_j}(z)+\partial_{\bar{w}}\cH\big(z,w_0\big)\,\overline{t_j\,\partial_z \eta^e_{t_j}(z)}}{t_j}.
\endaligned
\end{equation}

The coefficients $\mu_a$ and $\nu_a$ define a genuine Beltrami equation, since they have an ellipticity bound and they are measurable. Indeed, since $w \mapsto \cH(z, w)$ is $k(z)$-Lipschitz by (H1), we get
\begin{equation*}\label{ellipticityD}
|\mu_a(z)| + |\nu_a(z)| \leq |\partial_w\cH(z, w_0)| + |\partial_{\bar{w}}\cH(z, w_0)| = |D_w \cH(z, w_0)| \leq k(z).
\end{equation*}
Measurability follows as $z \mapsto D_w\cH(z, w)$ is measurable as a limit of measurable functions, and $w \mapsto D_w\cH(z, w)$ is continuous by assumption. Thus $(z, w) \mapsto D_w\cH(z, w)$ is jointly measurable as a Carath\'eodory function. Since $w_0=\partial_z \phi_a(z)$ is a measurable function of $z$, the measurability of $\mu_a$ and $\nu_a$ follows.

Now, $\eta^e$ solves the $\R$-linear Beltrami equation \eqref{rlineara} defined by $\mu_a$ and $\nu_a$, if for every compactly supported test function $\xi\in C^{\infty}_{0}(\C)$
\begin{equation}\label{distbelt}
\int_{\C} \xi(z)\left(\partial_{\zbar} \eta^e(z) - \mu_a(z)\, \partial_z \eta^e(z) - \nu_a(z)\, \overline{\partial_z \eta^e(z)}\right) dA(z) = 0.
\end{equation}
We have, by equation~\eqref{nonhomogeneousequation},
\begin{equation}\label{distbelt2}
\aligned
&\left| \int_{\C} \xi(z)\,\left(\partial_{\zbar} \eta^e(z) - \mu_a(z)\, \partial_z \eta^e(z) - \nu_a(z)\, \overline{\partial_z \eta^e(z)}\right)\,dA(z)\right|\\
&\quad \leq  \left|\int_{\C} \xi(z)\left(\partial_{\zbar}-  \mu_a(z)\partial_z-\nu_a(z)\, \overline{\partial_z}\right)(\eta^e(z)-\eta^e_{t_j}(z))\,dA(z)\right| \\ 
&\qquad+ \left| \int_{\C} \xi(z)\, h_{t_j}(z)\,dA(z)\right|.
\endaligned
\end{equation}
Since $\eta^e_{t_j} \to \eta^e$ locally uniformly, $D_z \eta^e_{t_j} \to D_z \eta^e$ weakly. Moreover, by Proposition~\ref{bilip} $(c)$,  it follows that 
\begin{equation}\label{weakbound2}
\|D_z \eta^e_{t_j}\|_{L^p(\D(0, R))} \leq c(p, K, R)
\end{equation}
and hence the convergence of $D_z \eta^e_{t_j}$  is weak in $L^p(\D(0,R))$. Let $R>0$ be such that $\textrm{supp}(\xi) \subset \D(0,R)$. Since  both $\xi\,\mu_a$ and $\xi\,\nu_a$ belong to $L^{\infty}(\C)$, they are suitable  test functions for the weak $L^p(\D(0,R))$-convergence. Thus the first term in the right hand side of \eqref{distbelt2} converges to $0$ and we are left to show that $h_{t_j} \to 0$  weakly.

We proceed as follows. Set
$$
B(R)=\bigcup_{l=1}^\infty \left(\bigcap_{k=1}^\infty\bigcup_{j=k}^\infty \left\{z\in \D(0,R): t_j\,|\partial_z\eta^e_{t_j}(z)|>2^{-l}\right\}\right)\!.
$$
We choose a sparse enough subsequence of $t_j$ (let us denote it $t_j$, too) such that $\lim_{k\to\infty}\sum_{j\geq k}t_j^p=0$. Now, for any fixed $l=1,2,3...$, one has
$$\aligned
\left|\left\{z\in \D(0,R): t_j\,|\partial_z\eta^e_{t_j}(z)|>2^{-l}\right\}\right|
&\leq \,2^{lp}\,t_j^p\int_{\D(0,R)}|\partial_z\eta^e_{t_j}(z)|^p\,dA(z)\\
&\leq 2^{lp}\,c(p,K,R)\, t_j^p,
\endaligned$$
where \eqref{weakbound2} was used at the last step. Then $|B(R)|=0$ easily follows. 

If we now fix a point $z\in (\D(0,R)\cap E)\setminus B(R)$, then by construction one sees that
$$\lim_{j\to\infty} t_j\,|\partial_z\eta^e_{t_j}(z)|=0.$$
Thus, using the $C^1$-smoothness of $w\mapsto\cH(z,w)$ one gets at these points $z$ that
\begin{equation}\label{aeconv}
\lim_{j\to\infty}\frac{|\cH(z,w_j)-\cH(z,w_0)-D_w\cH(z,w_0)\,(w_j-w_0)|}{|w_j-w_0|}=0,
\end{equation}
where we write $w_0=\partial_z\phi_a(z)$ as before, and $w_j=w_0 + t_j\,\partial_z\eta^e_{t_j}(z).$
Summarizing, if $p'=\frac{p}{p-1}$, then
$$\aligned
&\left|\int_{\C} \xi(z)\,h_{t_j}(z)\,dA(z)\right|
=\left|\int_{\D(0,R)} \xi(z)\,\frac{h_{t_j}(z)}{\partial_z\eta^e_{t_j}(z)}\,\partial_z\eta^e_{t_j}(z)\,dA(z)\right|\\
&\quad\leq \left(\int_{\D(0,R)} \left| \xi(z)\,\frac{h_{t_j}(z)}{\partial_z\eta^e_{t_j}(z)}\right|^{p'}\,dA(z)\right)^\frac{1}{p'}\,\left(\int_{\D(0,R)} |\partial_z\eta^e_{t_j}(z)|^p\,dA(z)\right)^\frac1p \!.
\endaligned$$
The second integral above is bounded independently of $j$, by \eqref{weakbound2}. Concerning the first integral, we note that
\begin{equation*}
\frac{|h_{t_j}(z)|}{|\partial_z\eta^e_{t_j}(z)|}=\frac{|\cH(z,w_j)-\cH(z,w_0)-D_w\cH(z,w_0)\,(w_j-w_0)|}{|w_j-w_0|}\leq 2k.
\end{equation*}
Moreover, we know by \eqref{extra15} and \eqref{aeconv} that 
\begin{equation*}
\frac{|h_{t_j}(z)|}{|\partial_z\eta^e_{t_j}(z)|}\to 0, \quad j \to \infty, \qquad \text{a.e. on }\D(0,R).
\end{equation*}
Since $\xi$ is compactly supported and bounded, the dominated convergence theorem then gives  
$$\lim_{j\to\infty}\int_{\C} \xi(z)\,h_{t_j}(z)\,dA(z)=0$$
as desired. 

The above argument gives, in fact, little more, and  we record this for later purposes. Namely, since $\frac{|h_{t_j}(z)|}{|\partial_z\eta^e_{t_j}(z)|}  <2$ is uniformly bounded, the dominated convergence applies to any $L^q$ norm so that, as long as $s < p < \frac{2K}{K-1}$,
\begin{equation}\label{extra16}
\aligned
\int_{\D(0,R)}|h_{t_j}|^s 
&= \int_{\D(0,R)}\left|\frac{h_{t_j}}{\partial_z \eta^e_{t_j}}\right|^s\,|\partial_z \eta^e_{t_j}|^s\\
&\leq \left(\int_{\D(0,R)}\left|\frac{h_{t_j}}{\partial_z \eta^e_{t_j}}\right|^\frac{sp}{p-s}\right)^\frac{p-s}{p}\,\left(\int_{\D(0,R)}|\partial_z\eta^e_{t_j}|^p\right)^\frac{s}{p} \to 0
\endaligned
\end{equation}
as $j\to \infty$.

In any case, we have shown that  \eqref{distbelt} holds, and thus the limit $\eta^e$ of any converging sequence $\eta^e_{t_j}$ solves the $\R$-linear Beltrami equation \eqref{rlineara} with coefficients $\mu_a$ and $\nu_a$. 

If  we are now given another converging subsequence $\eta^e_{\tilde{t}_j}$, then by the above argumentation the limit $\tilde{\eta}^e$ solves the same $\R$-linear Beltrami equation \eqref{rlineara} as $\eta$. But $\R$-linear equations have only one $K$-quasiconformal solution that fixes $0$ and maps $1$ to a given nonzero point, \cite[Theorem 6.2.3]{AIM}. Hence $\eta^e \equiv \tilde{\eta}^e$.
In particular, the full sequence $\eta^e_t$ converges, showing that for every $a \in \C$ the directional derivatives $\partial^a_e \phi_a = \lim_{t\to 0}\eta^e_t$ exist (convergence is  locally uniform) and satisfy \eqref{rlineara}.
\end{proof}

\begin{rem}\label{extra17}
The uniqueness of the homeomorphic solutions to  \eqref{rlineara} implies that the partial derivatives of the mapping $a\mapsto\phi_a(z)$ depend $\R$-linearly on $e$ so that 
\begin{equation*}
\partial^a_e\phi_a(z) = (\Re e)\,\partial^a_1\phi_a(z) + (\Im e)\,\partial^a_i\phi_a(z).
\end{equation*}
 Thus these form 
 a linear field of $K$-quasiconformal mappings indexed by the direction $e$ of the differentiation. 
 \end{rem}
 
 In addition, the directional derivatives $\partial^a_e \phi_a$ are not only pointwise derivatives, but metric derivatives with the $W^{1,2}_{\loc}$-topology.

\begin{cor}\label{FrechetW12}
Assume that $\cH$ has the uniqueness property and suppose $w \mapsto \cH(z, w) \in C^1(\C)$ for every fixed $z\in \C$. Then  $\partial^a_e\phi_a$ is the $e$-directional derivative of 
$$\aligned
a \mapsto\phi_a &: \C \to L^\infty_{\loc}(\C),\\
a \mapsto\phi_a &: \C \to W^{1,2}_{\loc}(\C).
\endaligned$$ 
\end{cor}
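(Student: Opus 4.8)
The plan is to upgrade the pointwise/locally uniform convergence established in Theorem~\ref{existDaFa} to convergence in the $L^\infty_\loc$ and $W^{1,2}_\loc$ topologies. The first claim, convergence in $L^\infty_\loc(\C)$, is essentially already in hand: in the proof of Theorem~\ref{existDaFa} we saw that $\eta^e_t = \frac{1}{t}(\phi_{a+te}-\phi_a)$ is a family of $K$-quasiconformal maps fixing $0$ and sending $1$ to $e$, that $\eta^e_t \to \partial^a_e\phi_a$ pointwise, and hence (by \cite[Corollary~3.9.3]{AIM}) locally uniformly, which is exactly convergence in $L^\infty(\D(0,R))$ for every $R$. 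So the difference quotient of $a\mapsto\phi_a$ converges in $L^\infty_\loc(\C)$ to $\partial^a_e\phi_a$, which is the assertion.

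For the $W^{1,2}_\loc$ statement, what remains is to show that $D_z\eta^e_t \to D_z(\partial^a_e\phi_a)$ strongly in $L^2(\D(0,R))$ for every $R>0$; combined with the $L^\infty_\loc$ convergence of the functions themselves this gives the $W^{1,2}_\loc$-differentiability. I would argue as follows. First, $D_z\eta^e_t$ is bounded in $L^p(\D(0,R))$ for some $p\in(2,\frac{2K}{K-1})$, uniformly in small $t$, by Proposition~\ref{bilip}(c); so along any sequence $t_j\to 0^+$ we may extract a weakly $L^p(\D(0,R))$-convergent subsequence, and by the locally uniform convergence of $\eta^e_{t_j}$ the weak limit must be $D_z(\partial^a_e\phi_a)$. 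Uniform $L^p$-boundedness with $p>2$ turns weak $L^2$-convergence into strong $L^2$-convergence once one also has convergence of the $L^2$-norms; so the crux is to verify
\begin{equation*}
\lim_{t\to 0^+}\int_{\D(0,R)}|D_z\eta^e_t|^2\,dA = \int_{\D(0,R)}|D_z(\partial^a_e\phi_a)|^2\,dA .
\end{equation*}
To get this I would pass through the Jacobian: since each $\eta^e_t$ and the limit $\partial^a_e\phi_a$ are $K$-quasiconformal homeomorphisms of $\C$, one has $\int_{\D(0,R)}J(z,\eta^e_t)\,dA = |\eta^e_t(\D(0,R))|$, and by the locally uniform convergence $\eta^e_t\to\partial^a_e\phi_a$ together with the fact that these are homeomorphisms with equicontinuous inverses, $|\eta^e_t(\D(0,R))|\to |\partial^a_e\phi_a(\D(0,R))|=\int_{\D(0,R)}J(z,\partial^a_e\phi_a)\,dA$. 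This controls the $L^2$ norm of the gradient up to the $K$-quasiconformality pinching $\frac1K|D_z g|^2\le J(z,g)\le|D_z g|^2$; to remove the pinching and obtain exact convergence of $\|D_z\eta^e_t\|_{L^2}$ one uses lower semicontinuity of the $L^2$-norm under weak convergence for the $\liminf$ inequality and the Jacobian/area identity for the $\limsup$ inequality, exploiting that for a $K$-quasiconformal map the gradient norm and $\sqrt{KJ}$ are comparable only up to constants but the relevant energy functional is still weakly lower semicontinuous.

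The main obstacle I anticipate is precisely the strong (not merely weak) $L^2$-convergence of the derivatives: weak convergence of $D_z\eta^e_t$ is immediate from the uniform $L^p$-bound, but upgrading to the norm requires genuinely using the quasiconformality and the area identity as above, and one must be careful that the domains $\eta^e_t(\D(0,R))$ converge in measure — this follows from locally uniform convergence of both $\eta^e_t$ and (via the Montel-type theorem applied to inverses, or equicontinuity of inverses of normalized $K$-quasiconformal maps) of $(\eta^e_t)^{-1}$. Alternatively, and perhaps more cleanly, one can invoke the known result that if $K$-quasiconformal maps converge locally uniformly then their derivatives converge in $L^s_\loc$ for every $s<\frac{2K}{K-1}$ (this is the stability of the derivative under locally uniform convergence, e.g. \cite[Theorem 5.4.2 or Corollary 5.5.3 type statements]{AIM}); applying this to $\eta^e_t\to\partial^a_e\phi_a$ gives $D_z\eta^e_t\to D_z(\partial^a_e\phi_a)$ in $L^2_\loc$ directly, and the Corollary follows at once. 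I would present the proof via this stability result, with the Jacobian/area argument as the fallback in case a self-contained derivation is preferred.
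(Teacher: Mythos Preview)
Your treatment of the $L^\infty_\loc$ claim is fine and matches the paper. The gap is in the $W^{1,2}_\loc$ part: neither of your two routes actually produces strong $L^2$-convergence of the gradients.

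Your Jacobian/area argument cannot close. From locally uniform convergence of the $K$-quasiconformal homeomorphisms $\eta^e_t$ you do get $\int_{\D(0,R)} J(z,\eta^e_t)\,dA \to \int_{\D(0,R)} J(z,\eta^e)\,dA$, but this only controls $\int (|\partial_z\eta^e_t|^2-|\partial_{\zbar}\eta^e_t|^2)$, not $\int (|\partial_z\eta^e_t|^2+|\partial_{\zbar}\eta^e_t|^2)$. Weak lower semicontinuity gives you $\liminf$ of each piece separately, and the distortion bound $|\partial_{\zbar}|\le k|\partial_z|$ only pins the ratio, not the individual limits; you cannot extract $\limsup\|D_z\eta^e_t\|_{L^2}^2 \le \|D_z\eta^e\|_{L^2}^2$ from these ingredients. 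Your fallback ``stability'' result is in fact false as stated: take $f_n(x+iy)=x+\tfrac{c}{n}\sin(nx)+iy$ with $0<c<1$. These are $K$-quasiconformal homeomorphisms of $\C$ with $K=\tfrac{1}{1-c}$, they converge locally uniformly to the identity, yet $\partial_{\zbar}f_n=\tfrac{c}{2}\cos(nx)$ does not tend to $0$ in $L^2_\loc$. So locally uniform convergence of $K$-quasiconformal maps does \emph{not} force $L^2_\loc$-convergence of derivatives; this is exactly the phenomenon behind $G$-convergence and homogenization of Beltrami operators.

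The missing idea is to use the equation. From the proof of Theorem~\ref{existDaFa} one has, with $L_a=\partial_{\zbar}-\mu_a\partial_z-\nu_a\overline{\partial_z}$, that $L_a(\eta^e_t)=h_t$ and $L_a(\eta^e)=0$, and crucially \eqref{extra16} gives $\|h_t\|_{L^2(\D(0,R))}\to 0$. Cutting off with $\xi\in C^\infty_0(\D(0,2R))$, $\xi\equiv 1$ on $\D(0,R)$, one gets
\[
L_a\bigl(\xi(\eta^e_t-\eta^e)\bigr)=(\eta^e_t-\eta^e)\,L_a\xi+\xi\,h_t,
\]
and since $L_a=\mathcal{B}\,\partial_{\zbar}$ with $\mathcal{B}=I-\mu_a\cS-\nu_a\overline{\cS\,\cdot\,}$ invertible on $L^2(\C)$, this yields $\|\partial_{\zbar}(\xi(\eta^e_t-\eta^e))\|_{L^2(\C)}\to 0$, hence $\|D_z\eta^e_t-D_z\eta^e\|_{L^2(\D(0,R))}\to 0$. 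In short, strong $W^{1,2}_\loc$-convergence here comes from the a~priori $L^2$-estimate for the linearized Beltrami operator together with the strong $L^2$-smallness of the inhomogeneity $h_t$, not from any general compactness property of quasiconformal maps.
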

\begin{proof} The first claim was shown within the proof of Theorem~\ref{existDaFa}. That  the limit defining the directional derivative $\partial^a_e \phi_a = \lim_{t\to 0}\eta^e_t$ can be taken in $W^{1, 2}_\loc(\C)$-metrics, too, requires a further short argument.

Let us denote $L_a = \partial_{\zbar}-\mu_a(z)\,\partial_z -\nu_a(z)\,\overline{\partial_z }$. We saw in the proof of Theorem~\ref{existDaFa} that
$$\aligned
L_a (\eta^e_{t_j}(z))&= h_{t_j}(z)\\
L_a(\eta^e)&=0,
\endaligned$$
almost everywhere.  We know also from \eqref{extra16}
that for every $R>0$,
\begin{equation}\label{strongL2}
\lim_{j\to\infty}\|h_{t_j}\|_{L^s(\D(0,R))}=0, \qquad \forall s < \frac{2K}{K-1}.
\end{equation}

Now, choose a real valued cutoff function $\xi\in C^{\infty}_0(\D(0,2R))$, $\xi=1$ on $\D(0,R)$. Then
$$L_a(\xi\,(\eta^{e}_{t_j}-\eta^e)) = (\eta^e_{t_j}-\eta^e) \, L_a(\xi) + \xi\, h_{t_j}.$$
We can write $L_a = \mathcal{B}\partial_{\zbar} =\partial_{\zbar}-\mu_a(z)\,\cS\partial_{\zbar} -\nu_a(z)\,\overline{\cS \partial_{\zbar}}$ where $|\mu_a(z)| + |\nu_a(z)| \leq k < 1$ and the Beurling transform $\cS$ is an isometry in $L^2$. Hence  $\mathcal{B}$ is invertible in $L^2(\C)$, so that for every  $\xi\,(\eta^{e}_{t_j}-\eta^e)\in W^{1,2}(\C)$, we have an a priori $L^2$-estimate
$$
\|\partial_{\zbar}(\xi(\eta^{e}_{t_j}-\eta^e))\|_{L^2(\C)}\leq c(K)\,\left(\|(\eta^e_{t_j}-\eta^e) \,  L_a(\xi) \|_{L^2(\C)}+ \|\xi\, h_{t_j}\|_{L^2(\C)}\right).
$$
The first term on the right hand side converges to $0$, due to the local uniform convergence of $\eta^e_{t_j}\to \eta^e$. The second term also converges to $0$ by  \eqref{strongL2}. Using now that the Beurling transform $\cS$ is bounded in $L^2(\C)$, we get for the full differentials that
\begin{equation*}
\lim_{j\to\infty}\left\|D_z \eta^e_{t_j} - D_z \eta^e\right\|_{L^2(\D(0,R))}=0.
\end{equation*}
\end{proof}

\begin{rem}
The argument above actually shows that Sobolev space $W^{1,2}_{\loc}(\C)$ can be replaced by $W^{1,p}_{\loc}(\C)$ for any $2<p<\frac{2K}{K-1}$, if one uses deeper invertibility properties of the operator $\mathcal{B}$, see \cite[Section 14]{AIM}. 
\end{rem}

It turns out that the derivative  is continuous as a function of $a$ respect to various topologies. 

\begin{thm}\label{partialder}
Assume that $\cH$ has the uniqueness property. Let $w \mapsto \cH(z, w)$ be $C^1$ for every fixed $z\in \C$. Then
\begin{enumerate}
\item[(a)] $a \mapsto \phi_a$ is continuously (Fr\'echet) differentiable as  a map  $\C \to L^{\infty}_\loc(\C)$;
\item[(b)] $a\mapsto\phi_a(z)$ is continuously differentiable on $\C$, for every $z\in\C$;
\item[(c)] $a \mapsto \phi_a$ is continuously (Fr\'echet) differentiable as  a map  $\C \to W^{1,2}_{\loc}(\C)$.
\end{enumerate}
\end{thm}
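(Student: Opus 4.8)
The plan is to reduce all three statements to the continuity of the two maps $a\mapsto\partial^a_1\phi_a$ and $a\mapsto\partial^a_i\phi_a$ in the relevant topology. By Remark~\ref{extra17} the differential is $D_a\phi_a\,e=(\Re e)\,\partial^a_1\phi_a+(\Im e)\,\partial^a_i\phi_a$, so continuity of these two maps pointwise in $z$ gives (b), which together with Corollary~\ref{corcontdif} yields (a), while their continuity into $W^{1,2}_\loc(\C)$ gives (c). Recall that the existence of all directional derivatives, and the fact that $\partial^a_e\phi_a$ is the unique normalized $K$-quasiconformal solution of the $\R$-linear equation~\eqref{rlineara} with coefficients $\mu_a,\nu_a$ as in~\eqref{muanua}, are already supplied by Theorem~\ref{existDaFa} and Corollary~\ref{FrechetW12}.

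The first step is the continuity of the linearized coefficients in $a$. Fix $a$ and take $a_n\to a$. Proposition~\ref{bilip}(c) gives $D_z\phi_{a_n}\to D_z\phi_a$ strongly in $L^p(\D(0,R))$ for every $R>0$ and every $2<p<\frac{2K}{K-1}$, hence, along a subsequence, $D_z\phi_{a_n}(z)\to D_z\phi_a(z)$ for a.e.\ $z$. Continuity of $w\mapsto D_w\cH(z,w)$ then forces $\mu_{a_n}(z)=\partial_w\cH(z,\partial_z\phi_{a_n}(z))\to\mu_a(z)$ and $\nu_{a_n}(z)\to\nu_a(z)$ for a.e.\ $z$; since $|\mu_{a_n}|+|\nu_{a_n}|\leq k<1$, dominated convergence upgrades this to $\mu_{a_n}\to\mu_a$, $\nu_{a_n}\to\nu_a$ in $L^q(\D(0,R))$ for every finite $q$.

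Second, I would pass to the limit in the linearized equations. Write $g_n=\partial^{a_n}_e\phi_{a_n}$ for $e\in\{1,i\}$: these are $K$-quasiconformal, fix $0$, send $1$ to $e$, and solve~\eqref{rlineara} with coefficients $\mu_{a_n},\nu_{a_n}$. By the Montel-type theorem~\cite[Theorem 3.9.4]{AIM} and the uniform higher integrability of derivatives of normalized $K$-quasiconformal maps, a further subsequence satisfies $g_n\to g$ locally uniformly with $D_z g_n\rightharpoonup D_z g$ in $L^p_\loc(\C)$, $g$ being $K$-quasiconformal with $g(0)=0$, $g(1)=e$. Testing the equation for $g_n$ against $\xi\in C^\infty_0(\C)$ and splitting $\mu_{a_n}\partial_z g_n=\mu_a\partial_z g_n+(\mu_{a_n}-\mu_a)\partial_z g_n$, the first term converges by weak $L^p$-convergence ($\xi\mu_a\in L^{p'}_\loc$) and the second by H\"older, using $\|\mu_{a_n}-\mu_a\|_{L^{p'}(\D(0,R))}\to0$ and the uniform bound on $\|\partial_z g_n\|_{L^p(\D(0,R))}$; the $\nu$-term is identical. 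Thus $g$ solves~\eqref{rlineara} with coefficients $\mu_a,\nu_a$, and by uniqueness of the normalized solution of an $\R$-linear equation~\cite[Theorem 6.2.3]{AIM}, $g=\partial^a_e\phi_a$. Independence of the limit from the subsequence gives $\partial^{a_n}_e\phi_{a_n}\to\partial^a_e\phi_a$ locally uniformly, which is (b), and (a) then follows via Corollary~\ref{corcontdif}.

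For (c) I would upgrade this to strong convergence of the gradients, along the lines of the proof of Corollary~\ref{FrechetW12}. With $L_a=\partial_{\zbar}-\mu_a\partial_z-\nu_a\overline{\partial_z}$, the equations for $g_n$ and for $\partial^a_e\phi_a$ give $L_a(g_n-\partial^a_e\phi_a)=(\mu_{a_n}-\mu_a)\partial_z g_n+(\nu_{a_n}-\nu_a)\overline{\partial_z g_n}=:r_n$, and for $2<p<\frac{2K}{K-1}$, H\"older yields $\|r_n\|_{L^2(\D(0,R))}\to0$. For a cutoff $\xi\in C^\infty_0(\D(0,2R))$ with $\xi\equiv1$ on $\D(0,R)$, one has $L_a(\xi(g_n-\partial^a_e\phi_a))=(g_n-\partial^a_e\phi_a)\,L_a(\xi)+\xi\,r_n$; the $L^2$-invertibility of the operator $\mathcal{B}$ (as in the proof of Corollary~\ref{FrechetW12}) then forces $\|\partial_{\zbar}(\xi(g_n-\partial^a_e\phi_a))\|_{L^2(\C)}\to0$ — the first term on the right vanishing by the locally uniform convergence of the previous step, the second by $\|r_n\|_{L^2}\to0$ — and boundedness of the Beurling transform gives $D_z g_n\to D_z\partial^a_e\phi_a$ in $L^2(\D(0,R))$. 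Taking $e=1$ and $e=i$ yields continuity of $a\mapsto D_a\phi_a$ into $W^{1,2}_\loc(\C)$, proving (c). The only genuinely new ingredient compared with Theorem~\ref{existDaFa} and Corollary~\ref{FrechetW12} is the control of the variation of $\mu_a,\nu_a$ with $a$, and I expect this to be the main point: it rests entirely on the strong $L^p_\loc$-continuity of $a\mapsto D_z\phi_a$ from Proposition~\ref{bilip}(c), which lets the Carath\'eodory nonlinearity $D_w\cH$ convert pointwise convergence of gradients into dominated convergence of coefficients; the remaining subsequence bookkeeping (one subsequence for a.e.\ convergence of gradients, a further one for the Montel limit, full convergence from uniqueness of the identified limit) is routine.
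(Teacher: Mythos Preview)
Your proof is correct and follows essentially the same route as the paper: convergence of the linearized coefficients $\mu_{a_n}\to\mu_a$, $\nu_{a_n}\to\nu_a$ in $L^q_\loc$ via Proposition~\ref{bilip}(c) and dominated convergence, followed by the cutoff/$L_a$-operator estimate (exactly as in Corollary~\ref{FrechetW12}) to upgrade to $W^{1,2}_\loc$-convergence of the directional derivatives. Your treatment of the $L^\infty$ part via the Montel-type compactness and uniqueness of the normalized solution to the limiting $\R$-linear equation is in fact more explicit than the paper's terse justification, but the substance is identical.
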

\begin{proof}
$(b)$ follows from $(a)$. For $(a)$ and $(c)$ we know by Corollary~\ref{FrechetW12} and Remark \ref{extra17} that $a\mapsto \phi_a$ is Gateaux differentiable. Arguing as in the proof of Corollary \ref{corcontdif}, the bi-Lipschitz properties of $a \mapsto \phi_a$ 
with  \cite[Proposition 4.3]{BL} show that the map $a \mapsto \phi_a$ is Fr\'echet differentiable from $\C$ to both of the spaces $L^\infty_{\loc}(\C)$ and $W^{1,2}_{\loc}(\C)$.

It remains to show the continuity of the derivatives, that 
\begin{equation}\label{first}
\lim_{b \to a}\| \partial_e^a\phi_a - \partial_e^a\phi_b \|_X = 0
\end{equation}
whether $X$ is  $L^\infty(\D(0, R))$ or $W^{1, 2}(\D(0, R))$.
Note that
\eqref{first} yields the continuity of $a \mapsto D_a\phi_a : \C \to L(\C, X)$. 

Here the case of $X = L^\infty(\D(0, R))$ follows directly from Remark \ref{extra17} and  Proposition~\ref{bilip} $(b)$. 
For  $X = W^{1, 2}(\D(0, R))$ we need to use the Beltrami equations \eqref{rlineara}. 
We first claim that if $b_j \to a$, then 
for every $1<p<\infty$,
\begin{equation}\label{conv1}\aligned
&\|\mu_{b_j} - \mu_a \|_{L^p(\D(0, R))}\\
&\quad= \|\partial_w \cH(\cdot , \partial_z \phi_{b_j}) - \partial_w \cH(\cdot, \partial_z \phi_a )\|_{L^p(\D(0, R))} \to 0,\\
&\|\nu_{b_j} - \nu_a \|_{L^p(\D(0, R))}\\
&\quad=  \|\partial_{\wbar} \cH(\cdot,\partial_z \phi_{b_j} ) - \partial_{\wbar} \cH(\cdot,\partial_z \phi_a) \|_{L^p(\D(0, R))} \to 0,
\endaligned
\end{equation}
as $j\to\infty$.  In fact, by  Proposition~\ref{bilip} $(c)$, we know that $\partial_z \phi_{b_j}(z) \to \partial_z \phi_a(z)$ for almost every $z\in \D(0, R)$, at least for a subsequence. 
Since $w\mapsto D_w\cH(z,w)$ is continuous and bounded by assumption, the dominated convergence theorem gives \eqref{conv1}  for this subsquence. But since the limit of $\mu_{b_j}$\!'s is the same for every converging subsequence, \eqref{conv1} holds for the entire sequence $(b_j)$.

On the other hand, 
let us consider the operator $L_a = \partial_{\zbar}-\mu_a(z)\,\partial_z -\nu_a(z)\,\overline{\partial_z }$ as in the proof of Corollary \ref{FrechetW12}. If  $\xi\in C^{\infty}_0(\D(0,2R))$ is  a real valued cutoff function with $\xi=1$ on $\D(0,R)$, this time we obtain
$$L_a(\xi\,(\partial_e^a\phi_{b_j}-\partial_e^a\phi_a)) = (\partial_e^a\phi_{b_j}-\partial_e^a\phi_a) \, L_a(\xi) + \xi\, L_a(\partial_e^a\phi_{b_j}).$$
Here we already saw that the first term on the right converges to zero uniformly. For the second term on the right,
$$
L_a(\partial_e^a\phi_{b_j}) = (\mu_{b_j}(z)-\mu_{a}(z))\partial_z \partial^{a}_{e}\phi_{b_j}(z) +(\nu_{b_j}(z)-\nu_{a}(z))\overline{\partial_z \partial^{a}_{e}\phi_{b_j}(z)}.
$$
The $\partial_e^a\phi_{b_j}$\!'s are $K$-quasiconformal maps of $\C$ fixing $0$ and taking $1$ to $e$, thus \cite[Corollary 13.2.4]{AIM} gives 
   for every $b \in \C$
\begin{equation*}
\|D_z \partial_e^a \phi_b \|_{L^p(\D(0,R))} \leq c_p(e,K, R), \qquad  p < \frac{2K}{K-1}.
\end{equation*}
Combining this with \eqref{conv1} shows that $L_a(\partial_e^a\phi_{b_j}) \to 0$ in $L^2(\D(0,R))$, and in fact in $L^p(\D(0,R))$ for every $ p < \frac{2K}{K-1}$.

Now the same argument as in Corollary \ref{FrechetW12} gives us a priori $L^2$-estimates
$\|\partial_{\zbar}(\xi\,(\partial_e^a\phi_{b_j}-\partial_e^a\phi_a))\|_{L^2(\C)} \to 0$ as $b_j \to a$, and similarly as there we obtain 
\begin{equation}\label{frechetlimit4}
\lim_{j\to\infty}\left\|D_z \partial_e^a\phi_{b_j} - D_z\partial_e^a\phi_{a} \right\|_{L^2(\D(0,R))}=0.
\end{equation}
\end{proof}

It is interesting to compare Theorem~\ref{partialder} with Corollary~\ref{corcontdif}. The fact that we are starting 
with the (nonlinear) equation allows us to get the continuity of the derivative in $W^{1,2}_{\loc}(\C)$. In the manifolds language this enables us to embed $\cF_{\cH}$ in $W^{1,2}_{\loc}(\C)$. 

\begin{cor}
Assume that $\cH$ has the uniqueness property and that $w \mapsto \cH(z, w) \in C^1(\C)$ for every fixed $z\in \C$. Then $\cF_{\cH}$ is a $C^1$-embedded submanifold of $W^{1,2}_{\loc}(\C)$.
\end{cor}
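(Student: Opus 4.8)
The plan is to repeat, essentially verbatim, the argument of Proposition~\ref{mani}, but with the target Fr\'echet space $L^\infty_{\loc}(\C)$ replaced by $W^{1,2}_{\loc}(\C)$, the one new ingredient being the stronger regularity supplied by Theorem~\ref{partialder}. Following Lang's definitions \cite[Chapter~II]{Lang}, one has to check two things: that $a\mapsto\phi_a$ is a topological embedding $\C\to W^{1,2}_{\loc}(\C)$, and that it is a $C^1$-immersion, i.e.\ a $C^1$-morphism whose differential at each point is injective with splitting image. First I would note that $\cF_{\cH}$ is genuinely a field of $K$-quasiconformal maps in the sense of Definition~\ref{perhe}: (F1) is the normalization $\phi_a(0)=0$, $\phi_a(1)=a$, and (F2) follows from the uniqueness property through Proposition~\ref{differences}, since for $a\neq b$ the difference $\phi_a-\phi_b$ cannot be constant because $\phi_a(1)-\phi_b(1)=a-b\neq0$. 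Moreover $a\mapsto\phi_a(z)\in C^1(\C)$ for every fixed $z$ by Theorem~\ref{partialder}(b), so Proposition~\ref{mani} already hands us a $C^1$-embedded submanifold of $L^\infty_{\loc}(\C)$; the only real content here is upgrading the ambient space.

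For the topological embedding, continuity of $a\mapsto\phi_a\colon\C\to W^{1,2}_{\loc}(\C)$ is immediate from Theorem~\ref{partialder}(c), which states that this map is continuously Fr\'echet differentiable. Injectivity is clear from $\phi_a(1)=a$. Continuity of the inverse is exactly where the finer topology must be exploited: if $\phi_{a_j}\to\phi_{a_0}$ in $W^{1,2}_{\loc}(\C)$, then $D_z\phi_{a_j}\to D_z\phi_{a_0}$ in $L^2(\D(0,R))$ for every $R$, and the lower bi-Lipschitz bound of Proposition~\ref{bilip}(c) at $p=2$ — concretely the area inequality $\pi|a-b|^2/\eta_K(1/R)^2\le\int_{\D(0,R)}|D_z(\phi_a-\phi_b)|^2$ appearing in its proof — forces $a_j\to a_0$. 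Hence $a\mapsto\phi_a$ is a homeomorphism onto its image in $W^{1,2}_{\loc}(\C)$.

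For the immersion part, Theorem~\ref{partialder}(c) gives $a\mapsto\phi_a\in C^1(\C,W^{1,2}_{\loc}(\C))$, and by Corollary~\ref{FrechetW12} together with Remark~\ref{extra17} the differential is the $\R$-linear map $D_a\phi_a\,h=(\Re h)\,\partial_1^a\phi_a+(\Im h)\,\partial_i^a\phi_a$, where $\partial_1^a\phi_a,\partial_i^a\phi_a\in W^{1,2}_{\loc}(\C)$ are the quasiconformal maps produced by Theorem~\ref{existDaFa}. For $e\neq0$ the image $D_a\phi_a\,e=\partial_e^a\phi_a$ is a quasiconformal map with $\partial_e^a\phi_a(1)=e\neq0$, hence nonzero, so $\ker D_a\phi_a=\{0\}$ and the differential is injective. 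Its range is the two-dimensional tangent space $T_{\phi_a}\cF_{\cH}=\operatorname{span}_{\R}\{\partial_1^a\phi_a,\partial_i^a\phi_a\}$, onto which $D_a\phi_a$ is a toplinear isomorphism (again by the lower bound of Proposition~\ref{bilip}(c)). Since $W^{1,2}_{\loc}(\C)$, with the seminorms $\|\cdot\|_{W^{1,2}(\D(0,n))}$, is a Fr\'echet space, its finite-dimensional subspaces are closed and complemented — choose coordinate functionals on the range and extend them by Hahn-Banach — so $D_a\phi_a$ splits. Thus $a\mapsto\phi_a$ is a $C^1$-immersion, and combined with the embedding property this exhibits $\cF_{\cH}$ as a $C^1$-embedded submanifold of $W^{1,2}_{\loc}(\C)$.

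As for what is delicate: essentially all the analytic difficulty has already been absorbed into Theorem~\ref{partialder1} and Theorem~\ref{partialder}(c), whose proofs rely on the a priori $L^2$-estimates for the Beltrami operator $\mathcal{B}=\partial_{\zbar}-\mu_a\,\cS\partial_{\zbar}-\nu_a\,\overline{\cS\partial_{\zbar}}$ and on weak $L^p$-convergence of the difference quotients of $\phi_a$; without the underlying nonlinear equation one would only have the $L^\infty_{\loc}$-differentiability of Corollary~\ref{corcontdif}. The two remaining points here — reverse continuity of the parametrization and splitting of a finite-dimensional tangent plane inside a Fr\'echet space — are routine, the first using nothing beyond the already-established lower bi-Lipschitz estimate and the second being a standard fact about locally convex spaces.
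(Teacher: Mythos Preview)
Your proof is correct and follows essentially the same approach as the paper's, which simply invokes Theorem~\ref{partialder} for the $C^1$-regularity in $W^{1,2}_{\loc}$, Proposition~\ref{bilip}(c) for the topological embedding, and refers back to the immersion argument of Proposition~\ref{mani}. You have spelled out some details (e.g., why $\cF_\cH$ is a field via Proposition~\ref{differences}, and the Hahn--Banach splitting of the two-dimensional range) that the paper leaves implicit, but the logical structure is identical.
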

\begin{proof}
By Theorem~\ref{partialder}, $a \mapsto \phi_a \in C^1(\C, W^{1,2}_{\loc}(\C))$.
Proving that that this yields an embedded manifold is similar to Proposition~\ref{mani}. Indeed, the proof that  $a\mapsto\phi_a$ is an immersion is the same and  the fact that $a \mapsto \phi_a : \C \to W^{1, 2}_{\loc}(\C)$ is a topological  embedding follows by Proposition~\ref{bilip} $(c)$.
\end{proof}

\section{Smooth $\cH$-equations}\label{smoothHequations}

\noindent We study the smoothness of  the  field $\cF_{\cH} = \{\phi_a(z)\}_{a\in\C}$ associated to the nonlinear Beltrami equation \eqref{Hqr}, when we are given a H\"older smooth   structure function $\cH$. The main results are Corollary~\ref{c3} and Theorem~\ref{c2}, in which we derive the smoothness of $\phi_a(z)$ with respect to both variables $a$ and $z$.

We will repeatedly use the following interpolation estimates.

\begin{thm}[Interpolation]\label{interpolation}
Let $\D_r \subset \C$ be a disk, and $f \in L^p(\D_{2r}) \cap C^{\gamma}(\D_{2r})$ for some $1<p<\infty$, $0<\gamma<1$. Given any $\theta\in\left(\frac{2}{2+\gamma p}, 1\right]$, set
\begin{equation*}
s = \theta\left(\gamma+\frac2p\right)-\frac2p.
\end{equation*}
Then
$$
\|f\|_{L^\infty(\D_{r})} \leq \|f\|_{C^s(\D_r)}\leq c(\theta, \D_{2r})\,\|f\|_{L^p(\D_{2r})}^{1-\theta}\,\|f\|_{C^\gamma(\D_{2r})}^\theta.$$
\end{thm}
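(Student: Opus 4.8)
The statement is a standard interpolation inequality between $L^p$ and the H\"older space $C^\gamma$, producing an intermediate H\"older norm $C^s$ with the exponent $s$ dictated by the scaling of the two scales of spaces. The plan is to reduce everything to a single mean-oscillation estimate on balls and then to optimise the radius of the ball. Concretely, I would first recall that for $f\in C^\gamma(\D_{2r})$ and any ball $\D(x,\rho)\subset\D_{2r}$ one has $|f(x)-f_{\D(x,\rho)}|\lesssim \rho^\gamma\,[f]_{C^\gamma}$ where $f_{\D(x,\rho)}$ denotes the average, while on the other hand $|f_{\D(x,\rho)}|\lesssim \rho^{-2/p}\,\|f\|_{L^p(\D_{2r})}$ by H\"older's inequality. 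Adding these and optimising $\rho\in(0,r]$ — balancing $\rho^{\gamma}[f]_{C^\gamma}$ against $\rho^{-2/p}\|f\|_{L^p}$ — yields the pointwise bound $\|f\|_{L^\infty(\D_r)}\lesssim \|f\|_{L^p(\D_{2r})}^{1-\theta_0}\|f\|_{C^\gamma(\D_{2r})}^{\theta_0}$ for the borderline exponent $\theta_0=\frac{2}{2+\gamma p}$; for larger $\theta$ one interpolates this $L^\infty$ bound trivially against $\|f\|_{C^\gamma}$.

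For the full $C^s$ estimate rather than just the $L^\infty$ estimate, I would estimate the H\"older seminorm $[f]_{C^s(\D_r)}$ directly: given $x,y\in\D_r$ with $|x-y|=d$, if $d$ is comparable to or larger than $r$ the estimate follows from the $L^\infty$ bound just proved (since $|f(x)-f(y)|\le 2\|f\|_{L^\infty}$ and $d^s\gtrsim r^s$), and if $d<r$ one writes $|f(x)-f(y)|\le |f(x)-f_{\D(x,d)}|+|f_{\D(x,d)}-f_{\D(y,d)}|+|f_{\D(y,d)}-f(y)|$; the outer two terms are $\lesssim d^\gamma[f]_{C^\gamma}\le d^\gamma\|f\|_{C^\gamma}$, and, since $\D(x,d)$ and $\D(y,d)$ overlap in a ball of radius $\gtrsim d$, the middle term is $\lesssim d^{-2/p}\|f\|_{L^p}$. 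Dividing by $d^s$ and again optimising the split (now with an extra free parameter coming from $\theta$, i.e. splitting $|f(x)-f(y)|$ as a geometric-mean of the $C^\gamma$-controlled part and the $L^p$-controlled part with weights $\theta$ and $1-\theta$) gives $[f]_{C^s(\D_r)}\lesssim \|f\|_{L^p(\D_{2r})}^{1-\theta}\|f\|_{C^\gamma(\D_{2r})}^{\theta}$, with $s=\theta(\gamma+\tfrac2p)-\tfrac2p$ being exactly the exponent for which the two powers of $d$ match; the constraint $\theta>\frac{2}{2+\gamma p}$ is precisely what makes $s>0$.

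The only genuinely delicate point is bookkeeping: one must check that $s\in(0,\gamma]$ throughout the admissible range of $\theta$ (so that $C^s$ is a bona fide intermediate space and $\|f\|_{L^\infty}\le\|f\|_{C^s}$ makes sense on the bounded domain $\D_r$), and that the constant depends only on $\theta$ and the geometry of $\D_{2r}$ and not on $f$; both are immediate from the formula for $s$ and from the scaling of the inequalities above. A clean alternative, which I would mention, is to invoke the general theory: $C^\gamma$ and $L^p$ on a nice bounded domain are, up to equivalence of norms, members of the Besov/Triebel–Lizorkin scale ($C^\gamma=B^\gamma_{\infty,\infty}$, $L^p=F^0_{p,2}$), and the asserted inequality is an instance of the standard real-interpolation embedding between them with the stated relation among the smoothness exponents; but the hands-on averaging argument above is self-contained and is the one I would write out. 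The main obstacle is thus not conceptual but organisational — getting the optimisation and the exponent arithmetic exactly right so that the single parameter $\theta$ simultaneously controls the interpolation weight and the resulting smoothness $s$.
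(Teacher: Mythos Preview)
Your proposal is correct and gives a self-contained elementary proof; the paper instead takes the abstract route you mention at the end. Specifically, the paper identifies $C^\gamma=B^\gamma_{\infty,\infty}$ and uses $L^p\subset B^0_{p,\infty}$, then invokes the complex interpolation identity $(B^0_{p,\infty},B^\gamma_{\infty,\infty})_{[\theta]}=B^{\theta\gamma}_{p/(1-\theta),\infty}$ together with the Besov embedding $B^{\theta\gamma}_{p/(1-\theta),\infty}\hookrightarrow B^s_{\infty,\infty}=C^s$ (both from Bergh--L\"ofstr\"om), and localises by multiplying with a cutoff. This yields the inequality in two lines once the machinery is granted, at the cost of citing the interpolation literature; your averaging/optimisation argument trades that citation for a page of explicit computation and is arguably more transparent about where the exponent $s=\theta(\gamma+\tfrac2p)-\tfrac2p$ comes from.

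One small point in your write-up deserves tightening. In the estimate of the $C^s$ seminorm for $d<r$, the three-term decomposition at scale $d$ only gives $|f(x)-f(y)|\lesssim d^\gamma\|f\|_{C^\gamma}+d^{-2/p}\|f\|_{L^p}$, which is a \emph{sum} and does not directly ``split as a geometric mean with weights $\theta,1-\theta$''. The clean way to finish is to use instead the two separate bounds $|f(x)-f(y)|\le d^\gamma[f]_{C^\gamma}$ and $|f(x)-f(y)|\le 2\|f\|_{L^\infty(\D_r)}$, take their geometric mean with weight $\alpha=(\theta-\theta_0)/(1-\theta_0)$, and plug in the $L^\infty$ bound you already proved; a line of arithmetic then recovers exactly $s=\theta(\gamma+\tfrac2p)-\tfrac2p$. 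This is surely what you intended, but as written the ``optimising the split'' step is slightly misstated.
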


\begin{proof}
Notice that $C^{\gamma} = B^{\gamma}_{\infty\infty}$, $L^p \subset B^0_{p, \infty}$, and, by the embedding theorem \cite[Theorem 6.5.1]{interpolation}, there is a continuous inclusion
$B^{\theta\gamma}_{\frac{p}{1-\theta}, \infty} \subset B^{s}_{\infty, \infty}$, where $B^{\cdot}_{\cdot, \cdot}$ denotes the Besov space.
In particular, if $\frac{2}{2+\gamma p}<\theta<1$, then $B^s_{\infty,\infty}=C^s$ with  $0<s<1$. By \cite[Theorem 6.4.5]{interpolation},
$$
(B^0_{p, \infty}, B^{\gamma}_{\infty, \infty})_{[\theta]} = B^{\theta\gamma}_{\frac{p}{1-\theta}, \infty}, \qquad 0\leq \theta\leq 1.
$$ 
Thus, for $f \in L^p(\C) \cap C^{\gamma}(\C)$,
$$
\|f\|_{L^\infty(\C)} \leq \|f\|_{C^s(\C)}\leq c(\theta)\,\|f\|_{L^p(\C)}^{1-\theta}\,\|f\|_{C^\gamma(\C)}^\theta.
$$
The statement follows when we apply the previous estimate to $\xi f$ with a suitable cutoff function  $\xi$.
\end{proof}

We first combine  the  interpolation (Theorem~\ref{interpolation}) with the Schauder estimates (Theorem~\ref{schauder}) to obtain continuity of $a \mapsto D_z\phi_a : \C \to L^{\infty}_{\loc}(\C)$; let the constant $\gamma(\alpha,K)$ be as in Theorem \ref{schauder}.

\begin{lem}\label{cont}
Assume that the structure function  $\cH$ has the uniqueness property. Let $\cH$ satisfy \eqref{holdercondition} and $\cF_\cH =\{\phi_a\}_{a\in\C}$. 
Then, for every $0<s<\gamma(\alpha, K) \leq \alpha$,
$$
\|D_z\phi_{a} - D_z\phi_{b}\|_{C^s(\D(z_1,R))} \leq c(\theta, \cH, z_1, R)\,|a - b|^{1 - \theta} (|a| + |b|)^\theta,
$$
where $\theta = \theta(s, \gamma)$.  

\end{lem}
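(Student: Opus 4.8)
The plan is to combine the two a priori ingredients already available: the $W^{1,2}_{\loc}$-difference estimate for the mappings $\phi_a$ coming from Proposition~\ref{bilip}(c), and the Schauder bound of Theorem~\ref{schauder} for the individual $D_z\phi_a$, and then feed both into the interpolation inequality of Theorem~\ref{interpolation}. First I would observe that, by the uniqueness property, $g = \phi_a - \phi_b$ is a $K$-quasiconformal map of $\C$ with $g(0)=0$, $g(1) = a-b$, so $g$ itself solves \emph{no} $\cH$-equation, but each of $\phi_a$, $\phi_b$ does, hence by Theorem~\ref{schauder} applied on a slightly larger disk $\D(z_1, 2R)$ we get $D_z\phi_a, D_z\phi_b \in C^{\gamma}(\D(z_1, 2R))$ with
\begin{equation*}
\|D_z\phi_a\|_{C^\gamma(\D(z_1, 2R))} \leq c(\cH, z_1, R)\,\|D_z\phi_a\|_{L^2(\D(z_1, 4R))},
\end{equation*}
and similarly for $\phi_b$; by the distortion/normalization estimates (Proposition~\ref{bilip}(c) with $a'=0$, or directly quasisymmetry) the right-hand side is bounded by $c(\cH, z_1, R)\,(|a| + |b|)$ up to a harmless additive constant absorbed into the $(|a|+|b|)^\theta$ factor. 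Thus $\|D_z\phi_a - D_z\phi_b\|_{C^\gamma(\D(z_1,2R))} \leq c(\cH, z_1, R)(|a|+|b|)$.

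Next I would record the $L^p$ half. Proposition~\ref{bilip}(c), applied with center $z_1$ rather than $0$ (which changes nothing, since translating the domain only changes the quasisymmetry constant), gives
\begin{equation*}
\|D_z\phi_a - D_z\phi_b\|_{L^p(\D(z_1, 2R))} \leq c(p, K, z_1, R)\,|a - b|
\end{equation*}
for any fixed $p \in \left(2, \tfrac{2K}{K-1}\right)$; actually $p=2$ already suffices for the interpolation statement as written, since Theorem~\ref{interpolation} allows $1<p<\infty$. Then I would apply Theorem~\ref{interpolation} to the function $f = D_z\phi_a - D_z\phi_b$ on the pair of disks $\D(z_1, R) \subset \D(z_1, 2R)$, with exponent $\gamma$ as above: for any admissible $\theta \in \left(\tfrac{2}{2+\gamma p}, 1\right]$ and $s = \theta(\gamma + \tfrac2p) - \tfrac2p$, this yields
\begin{equation*}
\|D_z\phi_a - D_z\phi_b\|_{C^s(\D(z_1, R))} \leq c(\theta, z_1, R)\,\|D_z\phi_a - D_z\phi_b\|_{L^p(\D(z_1, 2R))}^{1-\theta}\,\|D_z\phi_a - D_z\phi_b\|_{C^\gamma(\D(z_1, 2R))}^{\theta},
\end{equation*}
and substituting the two bounds from the previous paragraph gives exactly the claimed inequality, with the dependence $\theta = \theta(s, \gamma)$ obtained by inverting the relation between $s$ and $\theta$ (note every $s \in (0, \gamma)$ arises from a unique $\theta \in \left(\tfrac{2}{2+\gamma p}, 1\right)$, so the statement's range $0 < s < \gamma(\alpha, K)$ is covered).

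The only genuinely delicate point is bookkeeping of the radii and the passage from the Schauder norm bound, which is stated on $\D(z_1, r)$ in terms of the $L^2$ norm on $\D(z_1, 2r)$ and requires $\D(z_1, 2r) \Subset \Omega = \C$ (automatic here), to the scale at which I want to run the interpolation; one must choose the disks in a nested chain $\D(z_1, R) \subset \D(z_1, 2R) \subset \D(z_1, 4R)$ so that Theorem~\ref{schauder} is legitimately applicable and so that the $L^2$ norm on the largest disk is controlled by $|a| + |b|$ via quasisymmetry of $\phi_a$ (with $\phi_a(0)=0$, $\phi_a(1) = a$). This is routine but needs to be stated carefully so the constants $c(\theta, \cH, z_1, R)$ depend only on the advertised quantities; nothing deeper than the cited estimates is needed.
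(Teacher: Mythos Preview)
Your argument is correct and matches the paper's proof essentially step for step: bound $\|D_z\phi_a - D_z\phi_b\|_{L^p}$ by Proposition~\ref{bilip}(c), bound each $\|D_z\phi_a\|_{C^\gamma}$ by Theorem~\ref{schauder} together with Proposition~\ref{bilip}(c) (taking $b=0$, so no additive constant is needed), and interpolate via Theorem~\ref{interpolation}. The only cosmetic issue is that Proposition~\ref{bilip}(c) is stated for disks centered at the origin, so rather than ``translating the domain'' you should simply enclose $\D(z_1,4R)$ in $\D(0,|z_1|+4R)$, which is exactly how the dependence on $z_1$ enters the constant.
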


\begin{proof}
By Proposition~\ref{bilip} $(c)$,
$$
\|D_z\phi_{a} - D_z\phi_{b}\|_{L^p(\D(z_1, R))} \leq c(K, \D(z_1, R))\,|a - b|.
$$
Further, we have a  $C^{1, \gamma}(\D(z_1, R))$-bound for $\phi_a$ and $\phi_{b}$ that is uniform in $|a|$, $|b|$, by combining the Schauder norm estimate \eqref{thmnorm} and Proposition~\ref{bilip} $(c)$,
$$
\|D_z\phi_{a}\|_{C^\gamma(\D(z_1,R))}
\leq c(\cH, \D(z_1,2R))\,\|D_z\phi_{a}\|_{L^2(\D(z_1,2R))} \leq c(\cH, \D(z_1,2R))\,|a|.
$$
Hence we can use interpolation (Theorem~\ref{interpolation}) to get the claim. 
\end{proof}

\begin{cor}\label{c3}
Assume that the structure function  $\cH$ has the uniqueness property. Let $\cH$ satisfy \eqref{holdercondition}   and $\cF_\cH=\{\phi_a(z)\}_{a\in\C}$. Then
$$
(z, a) \mapsto \phi_a(z) \in C^1(\C \times \C).
$$
\end{cor}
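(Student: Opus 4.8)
The plan is to establish joint $C^1$ regularity of $(z,a)\mapsto\phi_a(z)$ by combining the two "directions" already controlled: differentiability in $z$ (from the Schauder theory, Theorem~\ref{schauder}) and differentiability in $a$ (from Theorem~\ref{existDaFa}), and then upgrading to joint continuity of the first-order partials. Concretely, I would argue as follows.

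\begin{proof}
By Theorem~\ref{schauder} and the uniform $L^2$-bound of Proposition~\ref{bilip}~$(c)$, each $\phi_a$ lies in $C^{1,\gamma}_{\loc}(\C)$ with the norm estimate \eqref{thmnorm}, so $z\mapsto D_z\phi_a(z)$ is continuous for every fixed $a$. By Theorem~\ref{existDaFa}, for every fixed $z$ the map $a\mapsto\phi_a(z)$ is differentiable with partials $\partial^a_e\phi_a(z)$, $e=1,i$, depending $\R$-linearly on $e$ (Remark~\ref{extra17}); in particular $a\mapsto D_a\phi_a(z)$ is defined everywhere. Thus $\phi_a(z)$ has all four first-order partial derivatives in $(z,a)$ at every point, and it remains only to check that these partials are jointly continuous; joint $C^1$-regularity then follows from the standard criterion that a function with continuous partial derivatives is $C^1$.

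For the $z$-partials, Lemma~\ref{cont} gives $\|D_z\phi_a-D_z\phi_b\|_{C^s(\D(z_1,R))}\le c\,|a-b|^{1-\theta}(|a|+|b|)^\theta$, so $a\mapsto D_z\phi_a\in C(\C, C^s_{\loc}(\C))\subset C(\C, C(\C))$; combined with the continuity of $z\mapsto D_z\phi_a(z)$ for each fixed $a$, a routine triangle-inequality argument (splitting $D_z\phi_a(z)-D_z\phi_b(w)$ as $[D_z\phi_a(z)-D_z\phi_a(w)]+[D_z\phi_a(w)-D_z\phi_b(w)]$ and using the uniform $C^s$ modulus) yields joint continuity of $(z,a)\mapsto D_z\phi_a(z)$. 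For the $a$-partials, fix $e\in\{1,i\}$; by Theorem~\ref{existDaFa} the map $g_a:=\partial^a_e\phi_a$ is the unique $K$-quasiconformal solution of the $\R$-linear Beltrami equation \eqref{rlineara} with $\mu_a,\nu_a$ given by \eqref{rlineara2} that fixes $0$ and sends $1$ to $e$. When $b\to a$, Proposition~\ref{bilip}~$(c)$ gives $\partial_z\phi_b\to\partial_z\phi_a$ a.e.\ (along subsequences), hence $\mu_b\to\mu_a$, $\nu_b\to\nu_a$ a.e.\ and in every $L^p_{\loc}$ by dominated convergence (the argument is exactly \eqref{conv1} in the proof of Theorem~\ref{partialder}); since the limiting equation has a unique normalized solution, the $K$-quasiconformal maps $g_b$ converge locally uniformly to $g_a$ (Montel plus uniqueness), i.e.\ $b\mapsto g_b\in C(\C, C(\C))$. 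As each $g_b$ is continuous in $z$, the same triangle-inequality splitting as above gives joint continuity of $(z,a)\mapsto\partial^a_e\phi_a(z)$.

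Assembling: all first-order partials of $(z,a)\mapsto\phi_a(z)$ exist at every point and are jointly continuous, so $\phi_a(z)\in C^1(\C\times\C)$.
\end{proof}

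The main obstacle I expect is not any single estimate but the bookkeeping in the two "mixed" continuity arguments: one must pass from continuity of $a\mapsto D_z\phi_a$ \emph{into} a Hölder (or sup) norm in $z$ — which Lemma~\ref{cont} supplies — and combine it with $z$-continuity for fixed $a$, being careful that the uniformity in $a$ holds on compact sets (it does, because the right-hand sides of \eqref{thmnorm} and Lemma~\ref{cont} are controlled by $|a|$). For the $a$-derivative the genuine point is that convergence $\mu_b\to\mu_a$, $\nu_b\to\nu_a$ only holds a.e.\ along subsequences a priori, but the uniqueness of the normalized homeomorphic solution to the limiting linear equation (\cite[Theorem~6.2.3]{AIM}) forces the whole family $g_b$ to converge, which is the same device already used in Theorem~\ref{partialder}; so the heavy lifting has really been done in Theorem~\ref{schauder}, Theorem~\ref{existDaFa}, and Lemma~\ref{cont}, and this corollary is an assembly statement.
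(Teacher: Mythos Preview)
Your proof is correct and follows essentially the same route as the paper's. The paper's argument is just more terse: for $D_z\phi_a$ it cites Lemma~\ref{cont} directly, and for $D_a\phi_a$ it cites Theorem~\ref{partialder} (continuity in $a$) and Theorem~\ref{partialder1} (quasiconformality, hence continuity, in $z$) rather than reproducing the Montel-plus-uniqueness argument as you do; your version simply unpacks what is behind those citations.
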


\begin{proof}
That $(z, a) \mapsto D_z\phi_a(z)$ exists and is continuous on $\C \times \C$ follows from Lemma~\ref{cont}. Concerning  $(z, a) \mapsto D_a\phi_a(z)$, the existence and continuity in $a$ is given by Theorem \ref{partialder} while continuity in $z$ follows from  the fact that all  partial derivatives $\partial_e^a \phi_a(z)$ are quasiconformal as maps in the $z$-variable, see Theorem~\ref{partialder1}.
 \end{proof}

Let us then see how the smoothness of $\cF_{\cH}$ is improved  when $\cH$ is a regular  structure function (see Definition~\ref{defregular}). The regularity of $\cH$ is required to guarantee that the Beltrami coefficients \eqref{muanua} are H\"older continuous, which enables us to use the classical Schauder estimates. 

\begin{lem}  \label{c1}
Let $\cH$ be  regular   and $\cF_\cH=\{\phi_a(z)\}_{a\in\C}$. Then
$$z\mapsto D_z D_a\phi_a(z) \quad \text{is locally H\"older continuous}$$
and, moreover, 
\begin{equation}\label{uniformbound}
\|D_z D_a\phi_a\|_{C^{\gamma\alpha}(\D_r)} \leq c(\cH, |a|, \D_r),
\end{equation}
where $\gamma 
\leq \alpha$.
\end{lem}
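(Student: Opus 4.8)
The plan is to derive the Hölder regularity of $z \mapsto D_z D_a \phi_a(z)$ from the classical Schauder theory applied to the linear Beltrami equations satisfied by the directional derivatives $g_e := \partial_e^a \phi_a$. By Theorem~\ref{existDaFa}, each $g_e$ solves the $\R$-linear equation $\partial_{\zbar} g_e = \mu_a(z)\,\partial_z g_e + \nu_a(z)\,\overline{\partial_z g_e}$ with $\mu_a(z) = \partial_w\cH(z, \partial_z\phi_a(z))$, $\nu_a(z) = \partial_{\bar w}\cH(z, \partial_z\phi_a(z))$. Since $\cH$ is regular, $(z,w)\mapsto D_w\cH(z,w)$ is locally $\alpha$-Hölder, and by Theorem~\ref{schauder} we have $z \mapsto \partial_z\phi_a(z) \in C^{\gamma}_{\loc}$ with $\gamma = \gamma(\alpha, K) \leq \alpha$; composing, $z \mapsto \mu_a(z), \nu_a(z)$ are locally $C^{\gamma\alpha}$, hence Hölder with some exponent $\beta \leq \gamma\alpha$. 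Classical Schauder estimates for the $\R$-linear Beltrami equation (equivalently, for the associated second-order elliptic divergence-type equation) then give $g_e \in C^{1,\beta}_{\loc}$, i.e.\ $D_z g_e \in C^{\beta}_{\loc}$, with a norm bound
\begin{equation*}
\|D_z g_e\|_{C^{\beta}(\D_r)} \leq c\big(K, \beta, \D_r, \|\mu_a\|_{C^\beta(\D_{2r})}, \|\nu_a\|_{C^\beta(\D_{2r})}\big)\,\|D_z g_e\|_{L^2(\D_{2r})}.
\end{equation*}

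First I would write $D_z D_a\phi_a(z)$ explicitly in terms of the two generators: since $D_a\phi_a(z)\,h = g_1(z)h_1 + g_i(z)h_2$ by Remark~\ref{extra17}, we have $D_z D_a\phi_a(z)$ determined by $D_z g_1$ and $D_z g_i$, so it suffices to bound each $\|D_z g_e\|_{C^{\gamma\alpha}(\D_r)}$ for $e = 1, i$. Next I would make the coefficient regularity quantitative: using $\|\mu_a\|_{C^\beta(\D_{2r})} \leq \mathbf{H}_{loc}\,(1 + \|\partial_z\phi_a\|_{C^\gamma(\D_{2r})})^{\text{something}}$ and the bound $\|D_z\phi_a\|_{C^\gamma(\D_{2r})} \leq c(\cH,\D_{2r})\,|a|$ from Lemma~\ref{cont} (itself from the Schauder norm bound \eqref{thmnorm} and Proposition~\ref{bilip}(c)), the coefficient norms are controlled by a constant depending only on $\cH$, $|a|$, $\D_r$. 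Then I would apply the linear Schauder estimate, controlling the right-hand side $\|D_z g_e\|_{L^2(\D_{2r})} \leq c_p(e, K, \D_{2r})$ using that $g_e$ is $K$-quasiconformal fixing $0$ and mapping $1 \to e$ together with \cite[Corollary 13.2.4]{AIM} — this gives a bound depending only on $K$, $e$, $\D_{2r}$, hence absorbed into $c(\cH, |a|, \D_r)$. Collecting the two generators yields \eqref{uniformbound}, and local Hölder continuity of $z \mapsto D_z D_a\phi_a(z)$ follows since Hölder bounds on every disk $\D_r \Subset \C$ give local Hölder continuity.

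The main obstacle I expect is verifying the precise exponent bookkeeping and the legitimacy of applying classical Schauder to the $\R$-linear Beltrami equation rather than to a scalar second-order equation: one must either quote a Schauder estimate directly for first-order $\R$-linear Beltrami systems with Hölder coefficients (e.g.\ from the linear theory in \cite{AIM}), or pass through the real and imaginary parts to a second-order elliptic system and invoke Schauder there, checking that the ellipticity constants are controlled by $K$ and the coefficient Hölder norms are controlled as above. A secondary subtlety is that the Hölder exponent one gets is $\gamma\alpha$ (Hölder of $C^\gamma$-composition with $\alpha$-Hölder $D_w\cH$), not $\gamma$ itself; this is exactly the exponent asserted in \eqref{uniformbound}, so one must be careful to track it honestly through the composition $z \mapsto \partial_z\phi_a(z) \mapsto D_w\cH(z, \partial_z\phi_a(z))$ and then through the Schauder gain (which preserves the exponent of the coefficients). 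Once the coefficient exponent $\gamma\alpha$ is fixed, the Schauder gain produces $D_z g_e \in C^{\gamma\alpha}_{\loc}$ and the uniform bound in $|a|$ is immediate from Lemma~\ref{cont}.
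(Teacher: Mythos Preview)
Your proposal is correct and follows essentially the same route as the paper: show $\mu_a,\nu_a\in C^{\gamma\alpha}_{\loc}$ by composing the local $\alpha$-H\"older regularity of $D_w\cH$ with the $C^\gamma$ bound on $\partial_z\phi_a$ from Theorem~\ref{schauder} and Proposition~\ref{bilip}(c), then apply classical Schauder for the linear Beltrami equation to the generators $\partial_1^a\phi_a,\partial_i^a\phi_a$ and bound their $L^2$ norms via normalization. The obstacle you flag---a Schauder estimate directly for first-order $\R$-linear Beltrami systems with H\"older coefficients---is handled in the paper by quoting \cite[Theorem~15.0.6]{AIM}, so no passage through a second-order system is needed.
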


\begin{proof}
First note that since $w \mapsto \cH(z, w) \in C^1(\C)$, we use  \cite[Theorem 1.3]{ACFKJ} to apply the Schauder estimates of Theorem~\ref{schauder} with $\gamma(\alpha, K) = \alpha$.


We know by Theorem~\ref{partialder1} 
that for $e = 1$ and $e= i$ 
the directional derivative $f=\partial^a_e\phi_a$ is a $K$-quasiconformal solution to the $\R$-linear  Beltrami equation  
\begin{equation*}
\partial_{\zbar}f(z) = \mu_a(z)\,\partial_z f(z) + \nu_a(z)\,\overline{\partial_z f(z)} \qquad \text{a.e.},
\end{equation*}
where $\mu_a$ and $\nu_a$ are given in \eqref{muanua}. 

By the Schauder estimates of Theorem~\ref{schauder} and  Proposition~\ref{bilip} $(c)$, there exists a constant $c = c(\cH, \D(z_0, r))$ such that 
\begin{equation*}
\|D_z \phi_a \|_{C^\gamma(\D(z_0, r))} \leq c\,|a| \eta_{K}( r). 
\end{equation*}
Further, using the assumption that $\cH$ is regular, so that  $(z, w) \mapsto D_w\cH(z, w)$ is locally $\alpha$-H\"older continuous,  one gets for  $z_1,z_2\in \D(z_0, r) = \D_r$ 
\begin{equation}\label{muaholder}
\aligned
&|\mu_{a}(z_1)-\mu_a(z_2)|
= |\partial_w\cH(z_1,\partial_z\phi_a(z_1))-\partial_w\cH(z_2,\partial_z\phi_a(z_2))|\\
&\qquad\leq c(\D_r, \D_{\| D_z \phi_a \|_{L^\infty(\D_r)}})\,\left(|z_1-z_2|^{\alpha} + |\partial_z\phi_a(z_1)-\partial_z\phi_a(z_2)|^\alpha\right)\\
&\qquad\leq c(\cH, |a|, \D_r)\,\left(|z_1-z_2|^{\alpha} + [D_z\phi_a]^\alpha_{C^\gamma(\D_r)}|z_1- z_2|^{\gamma\alpha}\right) \\
&\qquad\leq c(\cH, |a|, \D_r)\,|z_1 - z_2|^{\gamma\alpha}.
\endaligned
\end{equation}
Hence $\mu_a\in C^{\gamma\alpha}_{\loc}(\C)$   with uniform norm bounds 
Therefore, by the classical Schauder estimates for linear Beltrami equations \cite[Theorem 15.0.6]{AIM}, $D_z f \in C^{\gamma\alpha}_{\loc}(\C)$ with  estimate $\|D_z f\|_{C^{\gamma\alpha}(\D_r)} \leq c(\cH, |a|, \D_r)\|f\|_{L^2(\D_{2r})}$.  Bounding the $L^2$-norm with Proposition~\ref{bilip} $(c)$ proves then our statement. 
\end{proof}


\begin{thm}\label{c2}
Let $\cH$ be  regular   and $\cF_\cH=\{\phi_a(z)\}_{a\in\C}$. Then
$$(z,a) \mapsto D_a D_z\phi_a(z) = D_z D_a \phi_a(z) \quad \text{is continuous on $\C \times \C$.}$$
\end{thm}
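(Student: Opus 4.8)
The plan is to combine the Schauder-type regularity that is already available in each variable separately with the classical refinement of Schwarz's theorem: if $g$ is a function of two real variables such that $g_x$, $g_y$ and one mixed partial, say $g_{xy}=\partial_y\partial_x g$, exist in a neighbourhood of a point $p$ and $g_{xy}$ is continuous at $p$, then the other mixed partial $g_{yx}=\partial_x\partial_y g$ exists at $p$ and equals $g_{xy}(p)$. Writing $a=(a_1,a_2)$, $z=(z_1,z_2)$ and $\phi_a(z)=(\phi^1,\phi^2)$ in real coordinates, it suffices to apply this to each scalar component $g=\phi^l$ with $x$ one of the $a_i$'s and $y$ one of the $z_j$'s (the remaining variables being frozen). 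Here the ``known'' mixed partial will be $\partial_z\partial_a\phi_a(z)=D_zD_a\phi_a(z)$, supplied by Lemma~\ref{c1}, while the first-order derivatives $D_a\phi_a(z)$ and $D_z\phi_a(z)$ exist and are continuous by Theorem~\ref{partialder1} and Corollary~\ref{c3}; the refinement then yields that $D_aD_z\phi_a(z)=\partial_a\partial_z\phi_a(z)$ exists at every point and equals $D_zD_a\phi_a(z)$.

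The first (and main) step is therefore to upgrade Lemma~\ref{c1} to the statement that $(z,a)\mapsto D_zD_a\phi_a(z)$ is \emph{jointly} continuous on $\C\times\C$. Lemma~\ref{c1} already gives, for each fixed $a$, local H\"older continuity in $z$ together with a uniform bound $\|D_zD_a\phi_a\|_{C^{\gamma\alpha}(\D_r)}\leq c(\cH,|a|,\D_r)$ that stays bounded when $a$ ranges over a compact set, so only continuity in the parameter $a$ (locally uniformly in $z$) remains. Fix $a$ and a sequence $b_j\to a$, and set $f_c^e=\partial_e^a\phi_c$ for $e\in\{1,i\}$. By Theorem~\ref{partialder}(c) the map $c\mapsto\phi_c$ is continuously Fr\'echet differentiable into $W^{1,2}_{\loc}(\C)$, hence $f_{b_j}^e\to f_a^e$ in $W^{1,2}_{\loc}(\C)$ and, along a subsequence, $D_zf_{b_j}^e\to D_zf_a^e$ almost everywhere. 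Since $\{D_zf_{b_j}^e\}_j$ is bounded in $C^{\gamma\alpha}(\D_r)$ uniformly in $j$ by Lemma~\ref{c1}, Arzel\`a--Ascoli lets us extract a further subsequence converging in $C^s_{\loc}(\C)$ for any $0<s<\gamma\alpha$, and the a.e.\ limit forces this $C^s$-limit to be $D_zf_a^e$. As every subsequence has a further subsequence with the same limit, the whole sequence $D_z\partial_e^a\phi_{b_j}\to D_z\partial_e^a\phi_a$ in $C^s_{\loc}(\C)$, which together with the uniform H\"older continuity in $z$ gives the joint continuity of $(z,a)\mapsto D_zD_a\phi_a(z)$. (Alternatively one may argue directly: $u_j:=f_{b_j}^e-f_a^e$ solves $L_a u_j=(\mu_{b_j}-\mu_a)\,\partial_z f_{b_j}^e+(\nu_{b_j}-\nu_a)\,\overline{\partial_z f_{b_j}^e}$ with H\"older coefficients $\mu_a,\nu_a$ as in \eqref{muaholder}; using Lemma~\ref{cont}, the H\"older continuity of $D_w\cH$ and the interpolation inequality Theorem~\ref{interpolation}, the right-hand side tends to $0$ in $C^s_{\loc}$, and the classical inhomogeneous Schauder estimate for $L_a$ then gives $D_zu_j\to0$ in $C^s_{\loc}$.)

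With Step~1 in place, Step~2 is soft. Taking $x$ to be an $a$-coordinate and $y$ a $z$-coordinate, we have: $g_x=D_a\phi$ exists everywhere (Theorem~\ref{partialder1}), $g_y=D_z\phi$ exists and is continuous everywhere (Corollary~\ref{c3}), and $g_{xy}=D_zD_a\phi$ exists everywhere (Lemma~\ref{c1}) and is continuous by Step~1. The refined Schwarz theorem --- whose proof uses only the mean value theorem applied twice to the second mixed difference quotient $Q(h,k)$ (once in $y$ via $g_y$, once in $x$ via $g_x$, the latter differentiable in $y$ because $g_{xy}$ exists), combined with $\lim_{k\to0}Q(h,k)=\bigl(g_y(x_0+h,y_0)-g_y(x_0,y_0)\bigr)/h$ --- then shows that $g_{yx}=\partial_x\partial_y g$ exists at every point and equals $g_{xy}$. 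Applying this componentwise yields that $D_aD_z\phi_a(z)$ exists on $\C\times\C$ and equals $D_zD_a\phi_a(z)$, and continuity of the common value is exactly what was proved in Step~1. I expect Step~1 to be the only real obstacle, and it is precisely there that regularity of $\cH$ is used, through the H\"older bounds of Lemma~\ref{c1} and the $W^{1,2}_{\loc}$-differentiability of Theorem~\ref{partialder}.
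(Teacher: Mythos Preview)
Your proposal is correct and follows the same two-step strategy as the paper: first prove joint continuity of $(z,a)\mapsto D_zD_a\phi_a(z)$, then invoke the refined Schwarz theorem (the paper cites Rudin, pp.~235--236) to interchange the order of differentiation. The only technical difference is in how you carry out Step~1: the paper uses the interpolation inequality (Theorem~\ref{interpolation}) directly, bounding
\[
\|D_zf_b-D_zf_a\|_{C^s(\D_r)}\leq c\,\|D_zf_b-D_zf_a\|_{L^2(\D_{2r})}^{1-\theta}\,\|D_zf_b-D_zf_a\|_{C^{\gamma\alpha}(\D_{2r})}^\theta,
\]
where the first factor tends to $0$ by \eqref{frechetlimit4} and the second is uniformly bounded by Lemma~\ref{c1}; this avoids any passage to subsequences and gives an explicit modulus of continuity. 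Your Arzel\`a--Ascoli route uses the same two inputs (the uniform $C^{\gamma\alpha}$ bound and $L^2$ convergence) but argues via compactness and identification of limits, which is equally valid though softer. Your parenthetical Schauder-estimate alternative also works but is heavier than either.
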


\begin{proof}
By Corollary \ref{c3} the partial derivatives  $D_a\phi_a(z)$ and $D_z\phi_a(z)$ exist and are continuous in $\C \times \C$.

To show the continuity of $(z, a) \mapsto D_z D_a \phi_a(z)$ we need
to pass from boundedness shown in Lemma~\ref{c1} to continuity. We combine  interpolation (Theorem~\ref{interpolation}), continuity in  $W^{1,2}_{\loc}(\C)$-topology (Theorem~\ref{partialder} $(c)$)
and \eqref{uniformbound}.  Indeed,  let  us fix a direction $e\in\C$, $|e|=1$, and $a, b\in \D_R$, and denote $f_b=\partial^a_e\phi_{b}$ and $f_a=\partial^a_e\phi_a$. We get 
$$\aligned
&\|D_zf_b-D_zf_a\|_{C^s(\D_r)}\\
&\quad\leq c(\theta, \D_r)\,\|D_zf_b-D_zf_a\|^{1-\theta}_{L^2(\D_{2r})}\,\|D_zf_b-D_zf_a\|_{C^{\gamma\alpha}(\D_{2r})}^\theta\\
&\quad \leq c(\theta, \cH,\D_r, \D_R)\,\|D_zf_b-D_zf_a\|^{1-\theta}_{L^2(\D_{2r})}\endaligned$$
and the continuity in $\D_r \times \D_R$ follows from \eqref{frechetlimit4}.

\smallskip

Finally, since $(z, a) \mapsto D_z D_a \phi_a(z)$ is continuous on $\C \times \C$, we can change the order of differentiation, i.e., $D_a D_z\phi_a(z) = D_z D_a \phi_a(z)$, see, for example, \cite[pp. 235--236]{rud}.
\end{proof}

\section{From $\cF$ to $\cH$}\label{famsec}

\noindent The main goal of this section is  to prove Theorems~\ref{Uniquenessthm} and \ref{homeo} for  fields $\cF_{\cH}$ associated to the structure function $\cH$. It turns out that to obtain a unique and well-defined structure function $\cH_{\cF}$ from a field $\cF$, one first needs some  smoothness properties, and it is natural to assume\begin{equation}\label{smooth}
(z,a) \mapsto D_a D_z\phi_a(z)  \quad \text{is continuous on $\C \times \C$,} \quad \phi_a \in \cF,
\end{equation}
since this condition is satisfied by fields arising from a regular $\cH$. But more importantly, we need  $\cF$ to  satisfy conditions of  non-degeneracy.

\begin{defn}\label{nondegdef}
Suppose that $\cF=\{\phi_a(z)\}_{a\in\C}$ is a field of $K$-quasiconformal maps satisfying (F1), (F2) from Definition~\ref{perhe}. We say that $\cF$ is \textit{smooth} if in addition
\eqref{smooth} holds. On the other hand, we call  $\cF$ \textit{non-degenerate}, 
if  the following two additional conditions are satisfied.
\begin{enumerate}
\item For every disk $\D(0, R)$ there exists a constant $c = c(K, R)$ such that
$$\frac{1}{c}\leq \frac{|\partial_z\phi_a(z)|}{|a|}\leq c,\qquad z\in \D(0, R),\quad a\neq 0.$$
\item For every $z,a\in\C$,  one has
$$\det D_a(\partial_z\phi_a)(z) \neq 0.$$
\end{enumerate}
\end{defn}

For non-degenerate and smooth fields we can use topology to understand the range of $a \mapsto \partial_z\phi_a(z)$.
\begin{lem}\label{keylemma}
Let $\cF=\{\phi_a(z)\}_{a\in\C}$ be a smooth and non-degenerate field of $K$-quasi\-conformal maps. 
Then for every $z \in \C$,  the mapping
$$a\mapsto \partial_z\phi_a(z)=: F_z(a)$$
is a homeomorphism of $\C$. 
\end{lem}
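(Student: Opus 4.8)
\textbf{Proof plan for Lemma~\ref{keylemma}.}
The plan is to fix $z \in \C$ and study $F_z : \C \to \C$, $F_z(a) = \partial_z\phi_a(z)$, by combining local injectivity coming from the non-vanishing Jacobian with a degree/topology argument that handles the behaviour at infinity. First I would record the basic regularity: by the smoothness assumption \eqref{smooth}, $F_z$ is $C^1$ on $\C$, and by condition (2) of non-degeneracy $\det D_a F_z(a) \neq 0$ everywhere, so $F_z$ is a local $C^1$-diffeomorphism; in particular it is an open map and locally injective. Next I would observe that condition (1) of non-degeneracy forces properness: since $\frac{1}{c}|a| \leq |\partial_z\phi_a(z)| \leq c|a|$ on every disk (equivalently, by the bi-Lipschitz control of Proposition~\ref{bilip}), $|F_z(a)| \to \infty$ as $|a| \to \infty$, hence preimages of compact sets are compact. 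A proper local homeomorphism between (connected, locally compact, here simply connected) manifolds of the same dimension is a covering map; since $\C$ is simply connected the covering must be trivial, i.e. a homeomorphism. This is essentially the standard ``proper local homeomorphism onto a simply connected target is a homeomorphism'' argument, and it gives both injectivity and surjectivity at once.

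If one prefers a more hands-on route avoiding covering-space language, the alternative is a degree argument: the properness of $F_z$ together with the fact that $\det D_a F_z$ does not change sign (this is exactly the content of the second clause of Proposition~\ref{locinj}/the discussion around \eqref{nullLag}, where the determinant is identified with a null Lagrangian $\Im(\partial_z f\,\overline{\partial_z g})$ of fixed sign) means $F_z$ has a well-defined, locally constant topological degree on $\C$, equal to $\pm 1$ because $F_z$ is a local diffeomorphism. A proper map of degree $\pm 1$ from $\C$ to $\C$ that is everywhere a local homeomorphism is a global homeomorphism: surjectivity follows since the degree is nonzero on every point of the (connected) target, and injectivity follows since every regular value has exactly $|\deg| = 1$ preimage while every value is regular. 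I would phrase the proof using whichever of these two formulations reads most cleanly in context; both need only the two non-degeneracy conditions plus $C^1$-smoothness, all of which are in the hypotheses.

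The one point that needs genuine care --- and which I expect to be the main obstacle --- is establishing properness cleanly, i.e. that $F_z$ really does blow up at infinity rather than, say, oscillating. Here condition (1) of Definition~\ref{nondegdef} is precisely designed for this: it gives the two-sided bound $|a|/c \leq |\partial_z\phi_a(z)| \leq c|a|$ uniformly for $z$ in a fixed disk, so in particular $|F_z(a)| \geq |a|/c \to \infty$. So in fact the obstacle is already resolved by the hypotheses, and the remaining work is the soft topological packaging. A secondary small subtlety is to make sure the sign of $\det D_a F_z$ is constant across all of $\C$ (not just locally); this is where I would cite Proposition~\ref{locinj} directly, since its statement asserts both nonvanishing and no sign change. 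With properness, openness, local injectivity, and a nonvanishing degree in hand, the conclusion that $F_z$ is a homeomorphism of $\C$ is immediate.
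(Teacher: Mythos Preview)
Your proposal is correct and follows essentially the same route as the paper: the paper's proof simply observes that smoothness makes $F_z$ continuously differentiable, the two non-degeneracy conditions make it locally homeomorphic and proper, and then invokes the monodromy theorem to conclude that $F_z$ is a global homeomorphism. One small remark: for the degree variant you need not appeal to Proposition~\ref{locinj} for the sign constancy, since under the hypotheses of the lemma the Jacobian $\det D_a F_z$ is already continuous and nowhere zero on the connected domain $\C$, hence of constant sign automatically.
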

\begin{proof} By our assumptions  $F_z: a\mapsto \partial_z\phi_a(z)$ is well-defined and continuously differentiable, for every fixed $z\in\C$, so that  the above  non-degeneracy assumptions make it   a locally homeomorphic  proper map  of $\C$. Thus by the monodromy theorem $F_z$ is a global homeomorpism.
\end{proof}

In particular, we want to apply the above argument to a field of solutions to a Beltrami equation.
 This is done by the following result, whose proof is postponed 
 to the next Subsection~\ref{nondegfamilies}. 

\begin{prop}\label{FHisnondeg}
Let $\cH$ be a regular  structure function. Then the field $\cF_\cH$  associated to $\cH$ is non-degenerate.
\end{prop}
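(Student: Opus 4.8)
The plan is to verify the two conditions of Definition~\ref{nondegdef} for the field $\cF_\cH$, given that $\cH$ is regular. Condition (1), the two-sided bound $\tfrac1c \leq |\partial_z\phi_a(z)|/|a| \leq c$ on disks, splits naturally: the upper bound follows immediately from the interior Schauder estimate of Theorem~\ref{schauder} together with Proposition~\ref{bilip}(c), since $\|D_z\phi_a\|_{C^\gamma(\D(0,R))} \leq c(\cH,R)\,\|D_z\phi_a\|_{L^2(\D(0,2R))} \leq c(\cH,R)\,|a|$, and in particular $|\partial_z\phi_a(z)| \leq c|a|$ pointwise on $\D(0,R)$. For the lower bound I would first reduce to normalized maps: writing $\phi_a = a\,\psi_a$ where $\psi_a = \phi_a/a$ is a normalized homeomorphic solution to the nonlinear Beltrami equation with the rescaled structure function $\cH_a(z,w) := \tfrac1a\cH(z,aw)$ (which satisfies (H1), (H2) and \eqref{holdercondition} with the same constants, uniformly in $a$), one has $\partial_z\phi_a(z)/a = \partial_z\psi_a(z)$; then the uniform lower bound $\inf_{z\in\D(0,R)}|\partial_z\psi_a(z)| \geq c(\cH,R) > 0$ is exactly the quantitative statement about the Jacobian in Theorem~\ref{Jac} (together with $J(z,\psi_a) \leq |\partial_z\psi_a|^2$ for quasiconformal maps), applied uniformly over the family $\{\cH_a\}_a$. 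The mild point to check here is that the constant $c(\cH,R)$ from Theorem~\ref{Jac} depends on $\cH_a$ only through the ellipticity bound $k$ and the H\"older constant $\mathbf{H}_\alpha$, both of which are uniform in $a$.

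For condition (2), the nonvanishing of $\det D_a(\partial_z\phi_a)(z)$, the idea is to identify this determinant with a null Lagrangian. By Theorem~\ref{partialder1}, for $e=1$ and $e=i$ the directional derivatives $f := \partial^a_1\phi_a$ and $g := \partial^a_i\phi_a$ are $K$-quasiconformal solutions of the same $\R$-linear Beltrami equation \eqref{rlineara} with the H\"older continuous coefficients \eqref{muanua}; by Theorem~\ref{c2} one may freely interchange $D_a$ and $D_z$, so $D_a(\partial_z\phi_a)(z)$ is the real $2\times2$ matrix with columns $\partial_z f(z)$ and $\partial_z g(z)$ (viewed as vectors in $\R^2$). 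Hence
$$
\det D_a(\partial_z\phi_a)(z) = \Im\big(\overline{\partial_z f(z)}\,\partial_z g(z)\big).
$$
This is the standard Wronskian-type null Lagrangian for $\R$-linear Beltrami systems, and the recent results quoted in the introduction (\cite{G-cpt}, \cite{G-cl}, \cite{AN}, \cite{AJ}) assert that for two $\R$-linearly independent homeomorphic solutions of such an equation this quantity is nonvanishing (indeed of constant sign) on all of $\C$. Since $\partial^a_1\phi_a$ and $\partial^a_i\phi_a$ are $\R$-linearly independent — their values at $z=1$ are $1$ and $i$ — the expression is nonzero at every $z$.

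The main obstacle, and the part that requires genuine care rather than bookkeeping, is the lower bound in condition (1): one is claiming a uniform-in-$a$ positive lower bound for $|\partial_z\phi_a|$, and this is not a soft normal-families argument — a priori the maps $\psi_a$ could degenerate as $a\to\infty$ if the rescaled structure functions $\cH_a$ behaved badly. The point is precisely that $\cH_a(z,w) = \tfrac1a\cH(z,aw)$ inherits all the relevant constants of $\cH$ uniformly (this uses that $\cH(z,0)=0$ and the Lipschitz and H\"older bounds are homogeneous of the right degree), so Theorem~\ref{Jac} delivers a bound $c(\cH,R)$ independent of $a$. Once this uniformity is in place the proposition follows by assembling Theorems~\ref{schauder}, \ref{partialder1}, \ref{c2} and \ref{Jac} together with the null-Lagrangian non-vanishing results; everything else is routine.
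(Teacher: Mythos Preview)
Your proposal is correct and follows essentially the same route as the paper: the two conditions of Definition~\ref{nondegdef} are handled exactly as in Corollary~\ref{continfty} (rescaling $\psi_a=\phi_a/a$, noting the rescaled structure function $\cH_a(z,w)=\tfrac1a\cH(z,aw)$ has uniform constants, then invoking Theorem~\ref{schauder} for the upper bound and Theorem~\ref{Jac} for the lower one) and Proposition~\ref{locinj2} (identifying $\det D_a\partial_z\phi_a$ with the null Lagrangian $\Im(\overline{\partial_zf}\,\partial_zg)$ via Theorem~\ref{c2}, then using the pointwise non-vanishing for H\"older-coefficient $\R$-linear equations). The only thing to tighten is the citation for condition~(2): the results in \cite{AN}, \cite{AJ} give non-vanishing only almost everywhere for measurable coefficients, so you should cite \cite[Proposition~5.1]{G-cl} (as the paper does), which uses the H\"older continuity of $\mu_a,\nu_a$ (established in \eqref{muaholder}) to get non-vanishing at \emph{every} point.
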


\begin{proof}[Proof of Theorem~\ref{homeo}]
It simply suffices to notice that for  a regular  structure function $\cH$, the field  $\cF_\cH = \{\phi_a(z) \}$ satisfies all the requirements in Lemma~\ref{keylemma}, due to 
Theorem~\ref{c2} and Proposition~\ref{FHisnondeg}.
\end{proof}

We know that the   structure functions $\cH$ with the uniqueness property generate a unique field $\cF_{\cH}$. For the other direction,  fields $\cF$ that are smooth enough and non-degenerate determine $\cH_\cF$ uniquely. 

\begin{thm}\label{family}
Assume that $\cF = \{\phi_a\}_{a\in\,\C}$ is a  field of $K$-quasiconformal mappings which is both smooth and non-degenerate in the sense of Definition \ref{nondegdef}.

Then there is a unique $\cH = \cH_{\cF}$ satisfying (H1), (H2) such that every member of $\cF$ is a $K$-quasiconformal solution to
\begin{equation*}
\partial_{\zbar} \phi_a(z) = \cH(z, \partial_z \phi_a(z)).
\end{equation*}
\end{thm}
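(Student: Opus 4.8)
The plan is to define $\cH_{\cF}$ by the formula $\cH_{\cF}(z, w) = \partial_{\zbar}\phi_a(z)$ whenever $w = \partial_z\phi_a(z)$, and then verify it is well-defined, that it satisfies (H1) and (H2), and that it is unique. The first and most important step is to show that for each fixed $z$ the map $F_z : a \mapsto \partial_z\phi_a(z)$ is a bijection of $\C$; this is exactly Lemma~\ref{keylemma}, which applies because $\cF$ is smooth and non-degenerate. Hence for every $(z,w)\in\C\times\C$ there is a unique $a = a(z,w) = F_z^{-1}(w)$, and $\cH_{\cF}(z,w) := \partial_{\zbar}\phi_{a(z,w)}(z)$ is a genuine function on $\C\times\C$ with $\cH_{\cF}(z,0) = \partial_{\zbar}\phi_0(z) = 0$ since $\phi_0\equiv 0$ (note $F_z(0) = 0$ by (F1) as $\phi_0\equiv 0$, so $a(z,0)=0$). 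By construction every $\phi_a$ solves $\partial_{\zbar}\phi_a(z) = \cH_{\cF}(z,\partial_z\phi_a(z))$ pointwise a.e.

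Next I would verify the structural hypotheses. For (H2), measurability of $z\mapsto\cH_{\cF}(z,w)$: fix $w$; one needs $z\mapsto a(z,w) = F_z^{-1}(w)$ to be measurable, which follows from joint continuity of $(z,a)\mapsto\partial_z\phi_a(z)$ (part of smoothness via Corollary~\ref{c3}/\eqref{smooth}) together with continuity of the inverse — a measurable, indeed continuous, selection argument using that $F_z$ is a homeomorphism depending continuously on $z$. Then $z\mapsto\partial_{\zbar}\phi_{a(z,w)}(z)$ is measurable as a composition. For (H1), the $k$-Lipschitz bound in $w$: given $w_1 = \partial_z\phi_a(z)$, $w_2 = \partial_z\phi_b(z)$ with corresponding $a,b$, the difference $\phi_a - \phi_b$ is $K$-quasiconformal by (F2), hence $|\partial_{\zbar}(\phi_a-\phi_b)(z)| \leq k(z)|\partial_z(\phi_a-\phi_b)(z)|$ where $k = \frac{K-1}{K+1}$; that is precisely $|\cH_{\cF}(z,w_1) - \cH_{\cF}(z,w_2)| \leq k\,|w_1 - w_2|$. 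The normalization $\cH_{\cF}(z,0)=0$ was already observed.

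For uniqueness of $\cH_{\cF}$: if $\widetilde\cH$ also satisfies (H1), (H2) and has every $\phi_a$ as a solution, then $\widetilde\cH(z,\partial_z\phi_a(z)) = \partial_{\zbar}\phi_a(z) = \cH_{\cF}(z,\partial_z\phi_a(z))$ for a.e. $z$, for every $a$. Since $a\mapsto\partial_z\phi_a(z)$ is surjective onto $\C$ for each fixed $z$, the two structure functions agree on all of $\C\times\C$ — one has to be slightly careful about the a.e. exceptional $z$-set depending on $a$, but running the identity over a countable dense set of $a$'s and using continuity of $w\mapsto\widetilde\cH(z,w)$ and $w\mapsto\cH_{\cF}(z,w)$ (Lipschitz by (H1)) on the full-measure set where it holds simultaneously, then density of $\{\partial_z\phi_a(z) : a \in \Q + i\Q\}$ in $\C$ (from surjectivity plus continuity of $F_z$), gives $\widetilde\cH = \cH_{\cF}$ a.e. $z$, for all $w$.

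I expect the main obstacle to be the careful handling of the a.e.-defined exceptional sets — the equation $\partial_{\zbar}\phi_a = \cH(z,\partial_z\phi_a)$ holds only off a null set $N_a$ depending on $a$, so pinning down $\cH_{\cF}$ as an honest everywhere-defined function and proving measurability and uniqueness requires threading through these null sets via countability and the continuity/Lipschitz regularity in $w$. The non-degeneracy (surjectivity of $F_z$ and the lower bound in Definition~\ref{nondegdef}(1)) is what makes every value $w$ genuinely attained, so that $\cH_{\cF}$ is determined everywhere rather than just on the "reachable" set; without it one would only get $\cH_{\cF}$ on a proper subset and extension (e.g. Kirszbraun) would destroy uniqueness.
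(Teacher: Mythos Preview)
Your approach is the same as the paper's: invoke Lemma~\ref{keylemma} to get the homeomorphism $F_z$, set $\cH_{\cF}(z,w)=\partial_{\zbar}\phi_{a(z,w)}(z)$, and verify (H1) via (F2) and (H2) via regularity of $a(z,w)$. The one place where the paper is cleaner is that it exploits the smoothness hypothesis \eqref{smooth} more fully: since $(z,a)\mapsto D_z\phi_a(z)$ is everywhere defined and continuous, the distortion inequality $|\partial_{\zbar}\phi_a-\partial_{\zbar}\phi_b|\le k\,|\partial_z\phi_a-\partial_z\phi_b|$ holds at \emph{every} point, not merely a.e., and the paper proves (Lemma~\ref{continuityofa}) that $z\mapsto a(z,w)$ is actually continuous, so $z\mapsto\cH_{\cF}(z,w)$ is continuous. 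Thus all of your anticipated difficulties with $a$-dependent null sets, countable dense families, and measurable selection simply do not arise here; your uniqueness argument via a dense set of $a$'s is correct but unnecessary in this smooth setting.
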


We will prove the above theorem in Section~\ref{famsecsmooth}. With it we have then completed the argument  that starting from a regular   structure function $\cH$, the associated field $\cF_\cH$ defines $\cH$ uniquely.

\begin{proof}[Proof of Theorem~\ref{Uniquenessthm}]
Assuming $\cH$ to be a regular  structure function, we obtain from Proposition~\ref{FHisnondeg} that the  field $\cF_\cH = \{\phi_a(z)\}_{a \in \C}$ associated to $\cH$ is non-degenerate. Further,  the required smoothness properties are provided by  Theorem~\ref{c2}, so that $\cF_{\cH}$ defines $\cH$ uniquely by Theorem~\ref{family}.
\end{proof}

\begin{rem}\label{uniq2} Suppose $\cF$ is a  field of $K$-quasiconformal mappings, with  distortions  small enough, e.g. $K < \sqrt{2}$, or more generally, such that  \eqref{inftybound} holds for $\cH_{\cF}$.
Then the structure function $\cH_{\cF}$ has  the uniqueness property by  \cite[Theorem 1.2]{ACFJS}. If in addition $\cF$ is smooth and non-degenerate in the sense of  Definition \ref{nondegdef},  then the corresponding field  $\cF_{{\cH}_{\cF}}$ must be equal to $\cF$.
\end{rem}


\subsection{Field $\cF_{\cH}$ is non-degenerate}\label{nondegfamilies}

We will prove Proposition~\ref{FHisnondeg} in two steps (Corollary~\ref{continfty} and Proposition~\ref{locinj2}). 
The first is   a consequence of Theorems~\ref{schauder} and \ref{Jac}.

\begin{cor}\label{continfty}
Let  $\cH$  satisfy \eqref{holdercondition} and $\cF_\cH = \{ \phi_a(z) \}$. There exists a constant $c=c(K,R)>0$ such that
\begin{equation}\label{lineargrowth}
\frac1{c}\leq \frac{|\partial_z\phi_a(z)|}{|a|}\leq c, \qquad z\in \D(0, R).
\end{equation}
In particular, $a \mapsto \partial_z \phi_a(z)$ admits a continuous extension at the point $a = \infty$.\end{cor}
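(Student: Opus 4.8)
The plan is to rescale to \emph{normalized} solutions and then read off the two inequalities in \eqref{lineargrowth} from the Schauder estimate (Theorem~\ref{schauder}) and from the Jacobian lower bound (Theorem~\ref{Jac}), respectively. First I would fix $a\neq0$ and put $\psi_a:=a^{-1}\phi_a$; this is a $K$-quasiconformal homeomorphism of $\C$ with $\psi_a(0)=0$ and $\psi_a(1)=1$, and a one-line computation shows that $\psi_a$ solves $\partial_{\zbar}\psi_a(z)=\cH_a\big(z,\partial_z\psi_a(z)\big)$ with the rescaled structure function $\cH_a(z,w):=a^{-1}\cH(z,aw)$. One checks directly that $\cH_a$ again satisfies (H1) and (H2) with the \emph{same} ellipticity bound $k(\cdot)$, and \eqref{holdercondition} with the \emph{same} constant $\mathbf{H}_\alpha(\Omega)$ on every bounded $\Omega$. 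Since the constants in Theorems~\ref{schauder} and~\ref{Jac} depend on the structure function only through $k$, $\alpha$ and $\mathbf{H}_\alpha$ on the relevant disks, every estimate we derive for the family $\{\psi_a\}_{a\neq0}$ will be uniform in $a$.

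For the upper bound in \eqref{lineargrowth} I would combine Proposition~\ref{bilip}~(c), applied with $b=0$ (recall $\phi_0\equiv0$), which gives $\|D_z\psi_a\|_{L^2(\D(0,R+1))}=|a|^{-1}\|D_z\phi_a\|_{L^2(\D(0,R+1))}\leq c(K,R)$ uniformly in $a$, with the Schauder norm estimate \eqref{thmnorm} applied to $\psi_a$: covering $\overline{\D(0,R)}$ by finitely many disks $\D(z_j,r)$ with $\D(z_j,2r)\Subset\D(0,R+2)$, one obtains $\sup_{\D(0,R)}|\partial_z\psi_a|\leq\|D_z\psi_a\|_{C^\gamma(\D(0,R))}\leq c(\cH,R)$, again uniformly in $a$. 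Multiplying through by $|a|$ yields $|\partial_z\phi_a(z)|\leq c\,|a|$ for $z\in\D(0,R)$.

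For the lower bound I would apply Theorem~\ref{Jac} to the normalized solution $\psi_a$, obtaining a constant $c_0>0$ — independent of $a$ by the scale-invariance noted above — with $\inf_{z\in\D(0,R)}J(z,\psi_a)\geq c_0$; since $J(z,\psi_a)=|a|^{-2}J(z,\phi_a)$ and $J(z,\phi_a)\leq|\partial_z\phi_a(z)|^2$, this forces $|\partial_z\phi_a(z)|\geq\sqrt{c_0}\,|a|$ on $\D(0,R)$, completing \eqref{lineargrowth}. Finally, for the continuous extension at $a=\infty$: the map $F_z\colon a\mapsto\partial_z\phi_a(z)$ is continuous on $\C$ by Corollary~\ref{c3}, and \eqref{lineargrowth} gives $|F_z(a)|\geq|a|/c\to\infty$ as $a\to\infty$, so $F_z$ extends to a continuous map of the Riemann sphere by setting $F_z(\infty)=\infty$.

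I expect the only genuine difficulty to be hidden inside Theorem~\ref{Jac}: there is no soft quasiconformal argument for the lower bound, since a generic normalized $K$-quasiconformal map can have arbitrarily small Jacobian on a fixed compact set, and it is precisely the nonlinear equation that rules this out. The contribution of the present argument is the bookkeeping that upgrades the single-map statement of Theorem~\ref{Jac} to a statement uniform over all of $\cF_\cH$, via the invariance of the class \eqref{holdercondition} under $\cH\mapsto\cH_a$. A secondary point that needs care is that the upper bound must also pass through the Schauder estimate rather than plain distortion theory, since without the H\"older regularity of $\cH$ a $K$-quasiconformal map need not have a locally bounded derivative.
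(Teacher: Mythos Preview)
Your argument is correct and follows essentially the same route as the paper: rescale to the normalized map $\psi_a=a^{-1}\phi_a$, observe that the rescaled structure function $\cH_a(z,w)=a^{-1}\cH(z,aw)$ satisfies (H1), (H2) and \eqref{holdercondition} with constants independent of $a$, then read off the upper bound from Theorem~\ref{schauder} combined with Proposition~\ref{bilip}(c) (with $b=0$), and the lower bound from Theorem~\ref{Jac}. The only cosmetic differences are that the paper cites Lemma~\ref{cont} rather than Corollary~\ref{c3} for continuity of $a\mapsto\partial_z\phi_a(z)$ on $\C$, and bounds $|\partial_z\psi_a|$ below via $(1-k)|\partial_z\psi_a|^2\leq J(z,\psi_a)$ in place of your $J(z,\psi_a)\leq|\partial_z\psi_a|^2$; both are trivially equivalent for the purpose at hand.
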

\begin{proof}

The mappings $f=\frac1a\,\phi_a$ are normalized quasiconformal solutions ($0 \mapsto 0$ and $1 \mapsto 1$) to the following nonlinear Beltrami equations
$$
\partial_{\zbar} f(z) = \tilde{\cH}(z,\partial_z f(z)) \qquad \text{a.e.},
$$
where $\tilde{\cH}(z,w)=\frac1a\,\cH(z,aw)$. Clearly $\tilde{\cH}$ satisfies (H1) and (H2). Moreover, if $z_1, z_2\in\Omega$, the H\"older bounds for $\cH$ give us that
$$
\left|\tilde{\cH}(z_1,w) -\tilde{\cH}(z_2, w)\right| \leq\frac{\mathbf{H}_{\alpha}(\Omega)}{|a|}\,|z_1 - z_2|^\alpha\,|a\,w| = \mathbf{H}_{\alpha}(\Omega)\,|z_1 - z_2|^\alpha|w|.
$$
In particular the H\"older constant of $\tilde{\cH}$ does not depend on $a$. Thus, first by Theorem~\ref{schauder} and Proposition~\ref{bilip} $(c)$ (with $b=0$) we get an upper bound
$$
\|D_z f \|_{C^{\gamma}(\D(0, R))} \leq c\;\|D_z f\|_{L^{2}(\D(0, 2R))} \leq c,
$$
where $c = c(\cH, R)$.
Since  $\alpha$, $\gamma$ and the   structure function are fixed and $K = \frac{1 + k}{1 - k}$, the upper bound at \eqref{lineargrowth} follows. For the lower bound, we use Theorem~\ref{Jac} and get
$$
(1 - k)\left|\frac{\partial_z\phi_a}{a}\right| \geq J\left(z, \frac{\phi_a}{a}\right) \geq c(\cH, R) > 0,
$$
for every $z\in\D(0, R)$, which proves \eqref{lineargrowth}. 

Finally, by Lemma~\ref{cont}, $a \mapsto \partial_z \phi_a(z)$ is continuous in $\C$, uniformly on compact subsets of $z$.  The estimate \eqref{lineargrowth} yields that
  $\lim_{|a| \to \infty} |\partial_z \phi_a(z)| = \infty$ for every $z\in\C$. 
\end{proof}

We are left to show the non-vanishing of the Jacobian.
Note that the mapping $a\mapsto \partial_z\phi_a(z)$, even being continuously differentiable, might still have a vanishing Jacobian for some $a$ and $z$ as, for instance, $\phi_a(z)=a\,z|z|^2$. However, the fact that the partial derivatives are solutions to a linear Beltrami equation with H\"older continuous coefficients prevents this kind of behaviour.
We find the following surprising   application of the non-vanishing of new null Lagrangians very interesting. These null Lagrangians have been  a recent theme of research (see, e.g., \cite{G-cl}, \cite{AIM}, \cite{AN}, \cite{AJ}, \cite{J}).

\begin{prop}[Proposition~\ref{locinj}]\label{locinj2}
Let $\cH$ be a regular  structure function and $\cF_\cH = \{ \phi_a(z) \}$. Then, for $a$, $z\in\C$,
$$\big|\det[D_a \partial_z \phi_a(z) ]\big|  \geq c(\cH, |a|, \D_r) > 0, $$ and the determinant does not change sign.
\end{prop}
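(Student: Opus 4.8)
The plan is to identify the quantity $\det[D_a\partial_z\phi_a(z)]$ with a null-Lagrangian expression of the form $\Im(\partial_z f\,\overline{\partial_z g})$ built from the directional derivatives $f = \partial^a_1\phi_a$ and $g = \partial^a_i\phi_a$, and then invoke the recent non-vanishing results for such expressions together with a quantitative lower bound coming from the Schauder theory of Section~\ref{smoothHequations}. First I would write out the $2\times 2$ matrix $D_a(\partial_z\phi_a)(z)$: by Theorem~\ref{c2} we may change the order of differentiation, so the columns of this matrix are $\partial_z\partial^a_1\phi_a(z)$ and $\partial_z\partial^a_i\phi_a(z)$, each viewed as a vector in $\R^2\cong\C$. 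A direct computation shows that for $f,g$ complex-valued, $\det\begin{pmatrix}\Re\partial_zf & \Re\partial_zg\\ \Im\partial_zf & \Im\partial_zg\end{pmatrix} = \Im(\overline{\partial_z f}\,\partial_z g) = -\Im(\partial_z f\,\overline{\partial_z g})$. Hence
\begin{equation*}
\det[D_a\partial_z\phi_a(z)] = \pm\,\Im\big(\partial_z\partial^a_1\phi_a(z)\,\overline{\partial_z\partial^a_i\phi_a(z)}\big).
\end{equation*}

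Next I would recall from Theorem~\ref{existDaFa} (Theorem~\ref{partialder1}) that $f=\partial^a_1\phi_a$ and $g=\partial^a_i\phi_a$ are $\R$-linearly independent $K$-quasiconformal solutions to one and the same $\R$-linear Beltrami equation \eqref{rlineara} with coefficients $\mu_a,\nu_a$ given by \eqref{muanua}; since $\cH$ is regular, Lemma~\ref{c1} (and the H\"older estimate \eqref{muaholder}) tells us these coefficients are locally H\"older continuous. For an $\R$-linear Beltrami equation with two $\R$-independent solutions, the Wronski-type results of \cite{G-cpt}, \cite{G-cl}, \cite{AN}, \cite{AJ} guarantee that $\Im(\partial_z f\,\overline{\partial_z g})$ never vanishes — this is precisely the ``new null Lagrangian'' non-vanishing alluded to in the discussion before the statement. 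In particular the sign of $\Im(\partial_z f\,\overline{\partial_z g})$ is constant in $z$ for fixed $a$; and since, by continuity in $a$ (Theorem~\ref{partialder}, Theorem~\ref{c2}) and the fact that $f,g$ stay $\R$-independent for every $a\neq\infty$ (they send $1$ to $1$ and $i$ respectively), the sign cannot jump with $a$ either. This establishes that the determinant is nonzero everywhere and of one sign, which already gives condition~(2) of non-degeneracy.

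To upgrade from ``nonzero'' to the quantitative lower bound $\big|\det[D_a\partial_z\phi_a(z)]\big|\geq c(\cH,|a|,\D_r)>0$, I would combine two ingredients. First, by Lemma~\ref{c1} the map $z\mapsto D_zD_a\phi_a(z)$ is H\"older continuous on $\D_r$ with norm controlled by $c(\cH,|a|,\D_r)$, so the determinant is a H\"older continuous function of $z$ with controlled modulus of continuity; it therefore suffices to bound it from below at a single convenient point, say $z=1$ (or to bound $\inf_{z\in\D_r}$ directly). Second, one uses the normalization $f(0)=0,f(1)=1$, $g(0)=0,g(1)=i$ together with the Jacobian lower bound of Theorem~\ref{Jac} applied to the linearized equation: a quasiconformal solution of an $\R$-linear Beltrami equation with H\"older coefficients and such normalization has $J(z,f)=|\partial_z f|^2-|\partial_{\bar z}f|^2$ bounded below on $\D_r$ by a constant depending only on the data, and similarly for $g$ — and, more to the point, the classical representation of solutions to linear equations via a principal solution composed with a holomorphic map lets one express $\Im(\partial_z f\,\overline{\partial_z g})$ as $J(z,h)\cdot\Im(F'\overline{G'})$ for a single quasiconformal $h$ and holomorphic $F,G$, whose Wronskian $\Im(F'\overline{G'})$ is bounded below on $\D_r$ by a normalized-family (Montel) compactness argument. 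The main obstacle, I expect, is this last quantitative step: turning the purely qualitative non-vanishing of the null Lagrangian into an effective lower bound uniform over $z\in\D_r$ with explicit dependence only on $\cH$, $|a|$, $r$ — this requires either a careful normalization/compactness argument for the linearized families $\{\partial^a_e\phi_a\}$ or a direct Harnack-type estimate for the null Lagrangian, rather than any soft topological reasoning.
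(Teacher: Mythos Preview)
Your qualitative argument is essentially the paper's: identify the determinant with the null Lagrangian $\Im(\partial_z f\,\overline{\partial_z g})$ built from $f=\partial_1^a\phi_a$ and $g=\partial_i^a\phi_a$ (after swapping the order of differentiation via Theorem~\ref{c2}), observe that $f,g$ solve the same $\R$-linear equation \eqref{rlineara} with H\"older coefficients, and invoke the non-vanishing results of \cite{G-cl}, \cite{AN}, \cite{AJ}. The paper in fact carries out the identification via the complex derivatives $\partial_a,\partial_{\bar a}$, obtaining directly $\Im(\partial_z f\,\overline{\partial_z g})=|\partial_a\partial_z\phi_a|^2-|\partial_{\bar a}\partial_z\phi_a|^2=\det[D_a\partial_z\phi_a]$, which fixes the sign and avoids your $\pm$.

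Where your proposal diverges is the quantitative lower bound, and here there is a genuine gap. Your ``bound at a single point plus H\"older continuity'' idea only yields a lower bound in a neighbourhood of controlled radius, not on all of $\D_r$. More seriously, the Stoilow-type factorisation you invoke, writing $\Im(\partial_z f\,\overline{\partial_z g})=J(z,h)\cdot\Im(F'\overline{G'})$ with a single quasiconformal $h$ and \emph{holomorphic} $F,G$, is not available for $\R$-linear Beltrami equations: solutions of $\partial_{\bar z}f=\mu\partial_z f+\nu\overline{\partial_z f}$ do not in general factor as a holomorphic map composed with a common quasiconformal change of variables, so the reduction to a holomorphic Wronskian is not justified.

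The paper's route to the quantitative bound is different and more direct. One observes that $|\Im(\partial_z f\,\overline{\partial_z g})|^2$ is the discriminant of the quadratic form $(t,s)\mapsto|t\,\partial_z f+s\,\partial_z g|^2$, so that
\[
|\Im(\partial_z f\,\overline{\partial_z g})|^2=4\bigl((|\partial_z f|^2+|\partial_z g|^2)\iota-\iota^2\bigr),\qquad \iota:=\inf_{(t,s)\in S^1}|t\,\partial_z f+s\,\partial_z g|^2.
\]
For each $(t,s)\in S^1$ the map $tf+sg$ is a normalised $K$-quasiconformal solution of \eqref{rlineara}, and its \emph{inverse} again solves an $\R$-linear Beltrami equation whose coefficients have H\"older norm controlled by $c(\cH,|a|,\D_r)$. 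One then applies the Jacobian lower bound of Theorem~\ref{Jac} (or the classical linear Schauder estimates) to this inverse to bound $\iota$ from below uniformly in $(t,s)$. This algebraic reduction to a one-parameter family of Jacobian estimates is the step your proposal is missing.
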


\begin{proof} 
Let 
\begin{align*}
f(z) &:= \partial_{1}^{a}\phi_a(z) =  \partial_a \phi_a(z) + \partial_{\bar{a}} \phi_a(z)\\
g(z) &:= -\partial_{i}^{a}\phi_a(z) =  i\big(\partial_{\bar{a}} \phi_a(z) - \partial_a \phi_a(z)\big).
\end{align*}
Then 
$$
\Im \left(\partial_z f\,\overline{\partial_z g}\right)= |\partial_a \partial_z\phi_a|^2 - |\partial_{\bar{a}}\partial_z\phi_a|^2 = \det[D_a \partial_z \phi_a(z) ], 
$$
since we can exchange the order of differentiation by Theorem~\ref{c2}.

Now, $f$ and $g$ solve the  $\R$-linear Beltrami equation \eqref{rlinearaint} by  Theorem~\ref{partialder1}, and from  \eqref{muaholder} we know that $\| \mu_a \|_{\C^{\gamma\alpha}(\D_r)} + \| \nu_a \|_{\C^{\gamma\alpha}(\D_r)} \leq c(\cH, |a|, \D_r)$. As a consequence, by \cite[Proposition~5.1]{G-cl}, we obtain
$$
\Im(\partial_z f(z)\,\overline{\partial_z g(z)})\neq 0 \qquad \text{everywhere}
$$
and it does not change sign, see \cite[Lemma 7.1.]{G-cl} or \cite[Theorem 6.3.2]{AIM}.

Actually, the argument  from  \cite[Lemma 7.1.]{G-cl}  can easily be made quantitative. Let $(t, s) \in S^1$, then $|\Im(\partial_z f(z)\,\overline{\partial_z g(z)})|^2$ is the discriminant of the quadratic form $|t\,\partial_z f(z) + s\,\partial_zg(z)|^2$. We have for the first eigenvalue $\lambda_1$ that
$$
\iota := \inf_{(t, s)\in S^1} \frac{|t\,\partial_z f(z) + s\,\partial_zg(z)|^2}{t^2 + s^2} = \lambda_1
$$
and thus
$$
|\Im(\partial_z f(z)\,\overline{\partial_z g(z)})|^2 = 4\left((|\partial_z f(z)|^2 + |\partial_z g(z)|^2)\iota - \iota^2\right).
$$
As in  \cite[Proposition~5.1]{G-cl}, map $(t\, f + s\,g)^{-1}$ solves also an $\R$-linear Beltrami equation  with H\"older continuous coefficients for every $t, s$. Moreover, it
is easy to check that, in our case, the H\"older norm of the coefficients is bounded by $c(\cH, |a|, \D_r)$.

Now, from our lower bound for the Jacobian (Theorem \ref{Jac}) or the classical Schauder estimates, one bounds $\iota$ from below by the H\"older norm of the coefficients and the $L^2$-norm of the solution and hence
$$
|\Im(\partial_z f(z)\,\overline{\partial_z g(z)})| \geq c(\cH, |a|, \D_r).
$$
\end{proof}

\begin{rem} If we strengthen our assumptions by a condition of the differential quotients like\begin{equation*}
\aligned&\left|
\frac{\cH(z_1,w_1+h_1)-\cH(z_1,w_1)}{h_1}-
\frac{\cH(z_2,w_2+h_2)-\cH(z_2,w_2)}{h_2}
\right|\\
&\qquad\leq c(\D_r)\,\bigg(|z_1-z_2|^\alpha+|w_1-w_2|^\beta\bigg)
\endaligned
\end{equation*} 
for every $z_1,z_2\in \D_r \subset \C$ and $w_1,w_2\in\C$, $h_1,h_2\in\C\setminus\{0\}$,  then it actually  holds that
$$\frac{1}{c_a(K,\D_r)}\leq \left|\frac{\partial_z\phi_a(z)-\partial_z\phi_b(z)}{a-b}\right|\leq c_a(K,\D_r),\qquad z\in \D_r,\, a\neq b.$$
and thus $a\mapsto\partial_z\phi_a(z)$ is directly a local homeomorphism in $\hat{\C}$ and thus a global homeomorphism.
\end{rem}

\subsection{Construction of $\cH_{\cF}$}\label{famsecsmooth}
Since it will be repeatedly used in this section, we need a label for the inverse of the homeomorphism $\, a \mapsto F_z(a) := \partial_z\phi_a(z)$, see Lemma \ref{keylemma}.

\begin{defn}\label{adef}
Let $\cF=\{\phi_a(z)\}_{a\in\C}$ be a smooth and non-degenerate field of $K$-quasiconformal maps. Then 
for each $(z,w) \in \C \times \C$ we denote 
\[a(z,w)=F^{-1}_z(w). \]
\end{defn}
\smallskip

Under the assumptions of Theorem~\ref{family}, the existence, uniqueness, and regularity properties of $\cH_{\cF}$ depend on those of the function 
$a$. Indeed, we need to have  
$$\cH(z,\partial_{z}\phi_a(z))= \partial_{\zbar}\phi_a(z),$$
and since $a \mapsto F_z(a) = \partial_{z}\phi_a(z)$ is a homeomorphism of $\C$, necessarily
$\cH = \cH_{\cF}$ is determined by 
\begin{equation}\label{defH}
  \cH(z, w)=\partial_{\zbar} \phi_a(z)\vert_{a=a(z,w)}.
\end{equation}
Equivalently, $(z,w)\mapsto \cH(z,w)$ is defined as the composition of 
\begin{equation}\label{compH}
(z,w)\mapsto a(z,w)\qquad \text{with} \qquad a\mapsto\partial_{\zbar}\phi_a(z).
\end{equation}


Thus $\cH(z, w)$ is uniquely defined by $\cF$ in $\C \times \C$, and it only remains to show that $\cH$ has all the properties required by Theorem \ref{family}. For this 
we  first prove the continuity of $a(z, w)$ in both variables.

\begin{lem}\label{continuityofa}
Let $\cF = \{ \phi_a(z) \}_{a\in\C}$ be a non-degenerate field of quasiconformal maps that  satisfies the smoothness condition \eqref{smooth}. Then
\begin{enumerate}
\item[(a)] $w\mapsto a(z,w)$ is continuous and
\item[(b)] $z\mapsto a(z,w)$ is continuous.
\end{enumerate}
\end{lem}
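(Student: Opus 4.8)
The plan is to exploit that $F_z(a)=\partial_z\phi_a(z)$ is, by Lemma~\ref{keylemma}, a homeomorphism of $\C$ for each fixed $z$, so that $a(z,w)=F_z^{-1}(w)$ is at least separately defined; what must be verified is continuity in each variable. For part (a), fixing $z$, the map $w\mapsto a(z,w)$ is literally the inverse of the homeomorphism $F_z$, hence continuous; so (a) is immediate once Lemma~\ref{keylemma} is in hand, and the only thing to spell out is that the smoothness hypothesis \eqref{smooth} together with non-degeneracy is exactly what feeds Lemma~\ref{keylemma}.

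The substance is in part (b): continuity of $z\mapsto a(z,w)$ for fixed $w$. First I would record what we know about the joint behaviour of $F_z(a)=\partial_z\phi_a(z)$: by Corollary~\ref{c3} (or directly, for a general smooth non-degenerate field, by hypothesis \eqref{smooth} which forces $(z,a)\mapsto\partial_z\phi_a(z)$ to be $C^1$, hence jointly continuous), the map $(z,a)\mapsto F_z(a)$ is jointly continuous on $\C\times\C$. Fix $w$ and $z_0$, set $a_0=a(z_0,w)$, and let $z_n\to z_0$; put $a_n=a(z_n,w)$, so $F_{z_n}(a_n)=w$ for all $n$. The argument then splits into two pieces. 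First, one must rule out escape to infinity: the sequence $(a_n)$ is bounded. This is where the non-degeneracy condition (1) of Definition~\ref{nondegdef} enters — it gives $|\partial_z\phi_a(z)|\geq |a|/c$ uniformly for $z$ in a fixed disk, so $|w|=|F_{z_n}(a_n)|\geq|a_n|/c$ forces $|a_n|\leq c|w|$. (For the field $\cF_\cH$ this is exactly Corollary~\ref{continfty}.) Second, with $(a_n)$ bounded, pass to any convergent subsequence $a_{n_k}\to a_*$; by joint continuity of $(z,a)\mapsto F_z(a)$ we get $w=F_{z_{n_k}}(a_{n_k})\to F_{z_0}(a_*)$, so $F_{z_0}(a_*)=w$, and since $F_{z_0}$ is injective, $a_*=a_0$. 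As every convergent subsequence of the bounded sequence $(a_n)$ has the same limit $a_0$, the whole sequence converges to $a_0$, which is continuity.

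The one point deserving care — and the closest thing to a genuine obstacle — is making sure that the hypotheses actually supply joint continuity of $(z,a)\mapsto F_z(a)$ and the uniform-in-$z$ lower bound $|\partial_z\phi_a(z)|\gtrsim|a|$ on compacta, rather than just pointwise versions. Joint continuity comes from \eqref{smooth} (continuity of $D_aD_z\phi_a(z)$ on $\C\times\C$ implies in particular continuity of $D_z\phi_a(z)$ there); the uniform lower bound is precisely non-degeneracy condition (1). So once the statement of the lemma is read with ``non-degenerate field satisfying \eqref{smooth}'' as the standing hypothesis, both ingredients are available by assumption, and the subsequence argument above closes the proof. I would write part (b) as: boundedness of $(a_n)$ via non-degeneracy~(1), then the standard ``every subsequence has a further subsequence converging to the same limit'' compactness argument using joint continuity and injectivity of $F_{z_0}$.
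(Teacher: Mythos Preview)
Your proof is correct, and for part (b) it takes a genuinely different route from the paper's. The paper argues quantitatively: writing $|a_0-a_n|=|F_{z_n}^{-1}(w_n)-F_{z_n}^{-1}(w)|$ with $w_n=F_{z_n}(a_0)$, it shows $w_n\to w$ and then establishes that the inverses $F_{z_n}^{-1}$ are \emph{uniformly locally Lipschitz} near $w$, by deriving a uniform positive lower bound on the Jacobian $J(a,F_{z_n})$ from non-degeneracy condition~(2) together with continuity of $(z,a)\mapsto J(a,F_z)$. Your compactness argument bypasses all of this: you use non-degeneracy condition~(1) to confine the $a_n$ to a compact set, then joint continuity of $(z,a)\mapsto F_z(a)$ plus injectivity of $F_{z_0}$ to pin down every subsequential limit. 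This is shorter and more robust; the paper's approach, in exchange, yields quantitative Lipschitz control on $z\mapsto a(z,w)$, which is in the spirit of the later Lemma~\ref{moregularityofa}. One small imprecision: \eqref{smooth} does not quite make $(z,a)\mapsto\partial_z\phi_a(z)$ of class $C^1$ (nothing is said about $D_z\partial_z\phi_a$), but it does give joint continuity---integrate $D_aD_z\phi_a$ along a segment in $a$ from $0$, using $\phi_0\equiv 0$---and that is all your argument needs.
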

\begin{proof} The first claim, $(a)$, is just a restatement of Lemma~\ref{keylemma}.
For  $(b)$ fix $z_0, w\in\C$. Take a sequence $z_n \to z_0$, $z_n \in \D(z_0, r)$. By Lemma~\ref{keylemma}, we have for  $a_0:= a(z_0,w)$ and $a_n := a(z_n,w)$  that $F_{z_0}(a_0) = \partial_{z}\phi_{a_0}(z_0) = w$ and $F_{z_n}(a_n) = \partial_{z}\phi_{a_n}(z_n) = w$. Now,
\begin{equation}\label{acont}
|a_0 - a_n| = |F_{z_n}^{-1} \circ F_{z_n} \circ F_{z_0}^{-1}(w) - F_{z_n}^{-1}(w)| =: |F_{z_n}^{-1}(w_n) - F_{z_n}^{-1}(w)|.
\end{equation}
Here
\begin{align}\label{holderforFz}
|w_n - w| &= |F_{z_n}(a_0) - F_{z_0}(a_0)| = |\partial_{z}\phi_{a_0}(z_n) - \partial_{z}\phi_{a_0}(z_0)|\to 0, \end{align}
when  $n\to\infty$, because  for fixed $a_0\in \C$, $z \mapsto \phi_{a_0}(z)\in C^1(\C)$. Furthermore, the smoothness
condition \eqref{smooth} implies $F_{z_n}$ is locally uniformly Lipschitz in a neighbourhood of $(z_0,w)$. Thus, if we can show that the Jacobian of $F_{z_n}$ with respect to $a$ has a positive lower bound, then $F_{z_n}^{-1}$
is  locally uniformly Lipschitz, and the continuity follows from \eqref{acont} and \eqref{holderforFz}.



\smallskip

But the estimates for the Jacobian of $F_{z_n}$ quickly follow. First, by  Condition (1) in the  non-degeneracy  Definition~\ref{nondegdef}, 
\begin{equation}\label{aboundbyw}
|a_n| \leq c(K, |z_0|, r)\,|F_{z_n}(a_n)| =  c(K, |z_0|, r)\,|w|.
\end{equation}
Hence we can study the situation locally for  $a_n \in \overline{\D\left(0, c(K, |z_0|, r)\,|w|\right)}$.

The Jacobian $J(a, F_{z_0}) \neq 0$ due to Condition (2) in the definition of non-degeneracy, and the smoothness \eqref{smooth} shows that $(z,a) \mapsto J(a, F_{z})$ is continuous. We may assume without loss of generality $J(a, F_{z_0}) > 0$, and then  
by compactness and continuity,  
$$
J(a, F_{z_0}) \geq c_2 = c_2(|w|, c(K, |z_0|, r)), \qquad a \in \overline{\D\left(0, c(K, |z_0|, r)\,|w|\right)}.
$$
Hence, when $r$ is small enough and $|z_n - z_0| < r$, we obtain 
$J(a, F_{z_n}) \geq  c_2/2  > 0$. We have shown our claim.
\end{proof}

\begin{proof}[Proof of Theorem~\ref{family}]
By assumption $z\mapsto \phi_a(z)$ is $C^1$, so that $\partial_{\zbar}\phi_a(z)$ exists at every point $z$ and $a$, and hence 
the function $w\mapsto \cH(z,w)$ given by \eqref{defH}, i.e., $\cH(z, w) = \partial_{\zbar}\phi_{a(z, w)}(z)$,  is well-defined for every $(z,w)\in \C\times\C$. Furthermore, by (F2) in Definition~\ref{perhe}, 
$$
|\partial_{\zbar}\phi_a(z) - \partial_{\zbar}\phi_b(z)| \leq \frac{K-1}{K+1}\,|\partial_z \phi_a(z) - \partial_z \phi_b(z)| 
$$
and, 
since $z \mapsto D_z \phi_a(z)$ is continuous,  the inequality holds at every point $z$. Thus, for fixed $z$, we get directly from  \eqref{defH} that
$$
\aligned
|\cH(z, w_1) - \cH(z, w_2)| &= |\partial_{\zbar}\phi_{a(z, w_1)}(z) - \partial_{\zbar}\phi_{a(z, w_2)}(z)| \\ &\leq \frac{K-1}{K+1}\,|\partial_z \phi_{a(z, w_1)}(z) - \partial_z \phi_{a(z, w_2)}(z)|\\
& = \frac{K-1}{K+1}\,|w_1 - w_2|
\endaligned
$$
and $w \mapsto \cH(z, w)$  is $k(z)$-Lipschitz for every $z$, with $\|k\|_\infty\leq\frac{K-1}{K+1}$. Also, $\cH(z, 0) \equiv 0$ by \eqref{defH}, since $\phi_0 \equiv 0$. Hence (H1) holds.

To see (H2), we recall that $\cH$ has been defined in \eqref{compH} as the composition  of two continuous functions, since $z\mapsto a(z,w)$ is continuous, by Lemma~\ref{continuityofa}, and $a\mapsto\partial_{\bar{z}}\phi_a(z)$ is continuous, by assumption. As a continuous function $z\mapsto \cH(z,w)$ is, in particular, measurable. 
 \end{proof}

\subsection{Regularity of $\cH_{\cF}$}\label{regularitysec}

In this final section we conclude the paper by showing how $\cH_{\cF}$ inherits the regularity of $\cF$. Looking at how we defined $\cH_{\cF}$ at \eqref{compH}, it is clear that we first need to understand the smoothness of $(z,w)\mapsto a(z,w)$. 

\begin{lem}\label{moregularityofa}
Let $\cF=\{\phi_a(z)\}_{a\in\C}$ be a non-degenerate field of $K$-quasi\-conformal mappings such that for some $s\in(0,1)$
\begin{equation}\label{linearboundH}
\|D_z\phi_a\|_{C^s(\D_r)}\leq c(s,r)\,|a|, \qquad R < \infty,
\end{equation}  
and  the field satisfies the smoothness condition \eqref{smooth}.
Then
\begin{enumerate}
\item[(a)] $z\mapsto a(z,w) \in C^s_{\loc}(\C)$ with the upper bound for $C^s_{\loc}$-norm that is uniform for $w$ in a compact set and
\item[(b)] $(z, w) \mapsto D_w a(z,w)$ is continuous.
\end{enumerate}
\end{lem}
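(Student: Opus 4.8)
The plan is to exploit that, for each fixed $z$, the map $a(z,\cdot)$ is by construction the inverse of $F_z\colon a\mapsto\partial_z\phi_a(z)$. By the smoothness condition \eqref{smooth} the map $a\mapsto\partial_z\phi_a(z)$ is $C^1$, with $a$-differential $D_aF_z(a)=D_a\partial_z\phi_a(z)$ jointly continuous in $(z,a)$; by Condition (2) of Definition~\ref{nondegdef} this differential never degenerates; and by Lemma~\ref{keylemma} $F_z$ is a global homeomorphism of $\C$. Hence $F_z$ is a global $C^1$ diffeomorphism and, by the inverse function theorem, $a(z,\cdot)=F_z^{-1}$ is $C^1$ with
$$D_w a(z,w)=\bigl(D_a\partial_z\phi_a(z)\bigr)^{-1}\big|_{a=a(z,w)}.$$
Granting this identity, part (b) reduces to the joint continuity of $(z,w)\mapsto a(z,w)$: matrix inversion is continuous on $\mathrm{GL}_2(\R)$, the map $(z,a)\mapsto D_a\partial_z\phi_a(z)$ is continuous by \eqref{smooth}, and the values remain in $\mathrm{GL}_2(\R)$ by Condition (2), so $D_w a$ is a composition of continuous maps.

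For part (a) I would re-run the argument of Lemma~\ref{continuityofa}, this time keeping track of the H\"older exponent. Fix $r>0$, take $z_1,z_2\in\D_r$, set $a_1:=a(z_1,w)$, and note $|a_1|\leq c(K,r)\,|w|$ by Condition (1) of non-degeneracy. Comparing $w=\partial_z\phi_{a_1}(z_1)$ with $\widetilde w:=\partial_z\phi_{a_1}(z_2)$, hypothesis \eqref{linearboundH} gives
$$|\widetilde w-w|=|\partial_z\phi_{a_1}(z_2)-\partial_z\phi_{a_1}(z_1)|\leq c(s,r)\,|a_1|\,|z_1-z_2|^s\leq c(s,r,K)\,|w|\,|z_1-z_2|^s,$$
while $|a(z_1,w)-a(z_2,w)|=|F_{z_2}^{-1}(\widetilde w)-F_{z_2}^{-1}(w)|$. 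To conclude I need $F_{z_2}^{-1}$ to be Lipschitz on a fixed disk containing $w$ and $\widetilde w$, with a constant uniform for $z_2$ near $z_1$ and $w$ in a compact set; this is exactly the step already carried out in Lemma~\ref{continuityofa}, using Condition (1) (the relevant $a$-values lie in a fixed compact set), then Condition (2) together with \eqref{smooth} (on that set $\det D_aF_{z_2}(a)$ is bounded below and $\|D_aF_{z_2}(a)\|$ above, by continuity and compactness), and finally integration of $(D_aF_{z_2})^{-1}$ along the segment $[w,\widetilde w]$. Combining the two estimates gives $|a(z_1,w)-a(z_2,w)|\leq C(s,r,K,|w|)\,|z_1-z_2|^s$ for $z_1,z_2$ close; since $z\mapsto a(z,w)$ is continuous, hence bounded on $\overline{\D_r}$ by Lemma~\ref{continuityofa}(b), a routine chaining argument upgrades this to a genuine $C^s(\D_r)$-bound, uniform for $w$ in a compact set. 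This proves (a).

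It then remains to note that $(z,w)\mapsto a(z,w)$ is jointly continuous: by the triangle inequality the $z$-increment is controlled by the uniform $C^s$-bound of part (a) and the $w$-increment by the continuity of $w\mapsto a(z,w)$ (Lemma~\ref{keylemma}), so $(z,w)\mapsto(z,a(z,w))$ is continuous, and (b) follows from the displayed formula for $D_wa$ as explained above. The main obstacle is the uniform Lipschitz estimate for $F_z^{-1}$, but it is essentially a reprise of the corresponding step in Lemma~\ref{continuityofa}; the only genuinely new work is the bookkeeping needed to keep all constants uniform for $w$ in a compact set while carrying the exponent $s$ through the argument.
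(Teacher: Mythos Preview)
Your proposal is correct and follows essentially the same route as the paper: for (b) the paper writes the identical inverse-function-theorem formula $D_w a(z,w)=\bigl(D_a(\partial_z\phi_a)(z)|_{a=a(z,w)}\bigr)^{-1}$ and deduces continuity from \eqref{smooth} and the continuity of $a(z,w)$; for (a) the paper, like you, compares $\partial_z\phi_{a}(z_1)$ with $\partial_z\phi_{a}(z_0)$ at the fixed parameter $a=a(z_1,w)$ using \eqref{linearboundH}, and then converts this into control of $|a(z_0,w)-a(z_1,w)|$ via a lower bound on $D_aF_z$.

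The only technical difference is that the paper applies the mean-value theorem directly on the \emph{$a$-segment} $[a(z_0,w),a(z_1,w)]$ rather than on your $w$-segment $[w,\widetilde w]$. Since both endpoints $a(z_i,w)$ lie in $\overline{\D(0,c(K,r)|w|)}$ by non-degeneracy (1), the infimum $\inf_{z\in\D_r,\,|\xi|=1}|D_a(\partial_z\phi_a)(z)|_{a(z,w)}\xi|$ is taken over a fixed compact set and is positive; this gives the H\"older bound for \emph{all} $z_0,z_1\in\D_r$ at once, with no need for your chaining step. (Your chaining is correct given the boundedness you note, just superfluous; in fact your own argument works globally too, since non-degeneracy (1) bounds $|F_{z_2}^{-1}(w')|\leq c(K,r)|w'|$ for every $w'\in[w,\widetilde w]$.)
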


\begin{proof}
We start by proving $(a)$. Let us denote 
$$G(z,a,w):= \partial_z\phi_a(z)-w.$$
Fix a disk $\D_r\subset \C$ and two points $z_ 0, z_1\in \D_r$. 

Notice that by \eqref{linearboundH}
\begin{equation}\label{Fupperbound}\begin{aligned}
\frac{|G(z_1, a(z_1,w), w)-G(z_0, a(z_1,w), w)|}{|z_1-z_0|^s} & \leq \|D_z\phi_a\|_{C^s(\D_r)}\\ & \leq c(\D_r)\,|a(z_1, w)|.
\end{aligned}
\end{equation}
Since $D_a((\partial_z \phi_a)(z)) \neq 0$ and it is by assumption continuous with respect to $(z, a)$, and $z\mapsto a(z,w)$ is continuous (Lemma~\ref{continuityofa}), we get $$
\inf_{z\in \D_r, |\xi|=1}|D_a((\partial_z\phi_a)(z))\vert_{a(z,w)}\,\xi|  \geq c(w, \D_r) > 0.
$$
Further, as $a(z,w)$ is continuous on $w$ (Lemma~\ref{continuityofa}), the infimum stays bounded for $z_0 ,z_1\in \D_r$ and $w$ in a compact set.

Thus, by the mean-value theorem,
\begin{equation}\label{lowerforinf}
\aligned
&\frac{|G(z_0, a(z_0, w), w)-G(z_0, a(z_1,w), w)|}{|a(z_0, w)-a(z_1, w)|}\\
&\qquad\geq \inf_{z\in\D_r, |\xi|=1}|D_a((\partial_z\phi_a)(z))\vert_{a(z,w)}\,\xi| \geq c(w, \D_r)
\endaligned
\end{equation}
and the lower bound is uniform for $w$ in a compact set.

Then, for each $w\in\C$
$$G(z_1,a(z_1,w), w)=G(z_0,a(z_0,w),w)=0.$$
Hence
\begin{align*}
&\frac{|G(z_1, a(z_1,w), w)-G(z_0, a(z_1,w), w)|}{|z_1-z_0|^s}\\
&\qquad=\frac{|G(z_0, a(z_0, w), w)-G(z_0, a(z_1,w), w)|}{|a(z_0, w)-a(z_1, w)|}\,\frac{|a(z_0, w)-a(z_1, w)|}{|z_0-z_1|^s}.
\end{align*}
The expression has an upper bound by \eqref{Fupperbound}, and combining this with non-degeneracy (as in \eqref{aboundbyw}  we get  $|a(z,w)|\le C(r)|w|$) and \eqref{lowerforinf} gives
$$
\frac{|a(z_0, w)-a(z_1, w)|}{|z_0-z_1|^s}\leq  \frac{c(\D_r)\,|a(z_1,w)|}{c(w,\D_r)} \leq C(w, \D_r)
$$
and $C(w, \D_r)$ is uniform for $w$ in a compact set.

 \medskip

For $(b)$, by assumption
$a\mapsto \partial_z\phi_a(z) \in  C^1(\C)$ while from non-degeneracy we know that $\det D_a(\partial_z\phi_a)(z) \neq 0 $ at every $a$,  $z\in\C$. Then according to the Inverse Function Theorem $w\mapsto a=a(z,w)$, the inverse of  $a\mapsto w=\partial_z\phi_a(z)$, is $C^1$ in the $w$ variable. In fact 
\begin{equation*}
D_w a(z,w)= \bigg(D_a (\partial_z\phi_a)(z)|_{a=a(z,w)}\bigg)^{-1}.
\end{equation*}
Then the smoothness \eqref{smooth} and Lemma \ref{continuityofa} imply the claim.
\end{proof}

Finally, $\cH_{\cF}$ inherits the regularity of $\cF$ in the following way.

\begin{thm}\label{regularity}
Let $\cF=\{\phi_a(z)\}_{a\in\C}$ be a non-degenerate and smooth field of $K$-quasi\-conformal mappings 
in the sense of Definition \ref{nondegdef}, and assume that  \eqref{linearboundH} holds for some $ s\in(0,1)$.

Then there is a unique nonlinear Beltrami equation \eqref{Hqr} such that every mapping of the field $\phi_a$ is a solution to \eqref{Hqr}. The structure function $\cH = \cH_{\cF}$ satisfies (H1), (H2), and the following regularity conditions:
$$
|\cH(z_1, w) - \cH(z_2, w)| \leq c(\D_r, \D_R)|z_1 - z_2|^s, \qquad z_i \in \D_r, w \in \D_R
$$
and $(z, w) \mapsto D_w \cH(z, w)$ is continuous.
\end{thm}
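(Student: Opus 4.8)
The plan is to build everything on top of Theorem~\ref{family} and Lemma~\ref{moregularityofa}. Since $\cF$ is a smooth, non-degenerate field of $K$-quasiconformal mappings, Theorem~\ref{family} already provides a unique structure function $\cH=\cH_{\cF}$ satisfying (H1), (H2) such that each $\phi_a$ solves $\partial_{\zbar}\phi_a(z)=\cH(z,\partial_z\phi_a(z))$; recall that uniqueness is forced because $a\mapsto F_z(a)=\partial_z\phi_a(z)$ is a homeomorphism (Lemma~\ref{keylemma}), so necessarily $\cH(z,w)=\partial_{\zbar}\phi_a(z)$ with $a=a(z,w)=F_z^{-1}(w)$ of Definition~\ref{adef}, i.e.\ $\cH$ is the composition \eqref{compH}. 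It therefore remains only to prove the two regularity properties, and the extra hypothesis \eqref{linearboundH} is exactly what is needed to invoke Lemma~\ref{moregularityofa}: thus $z\mapsto a(z,w)\in C^s_{\loc}(\C)$ with $C^s$-bounds uniform for $w$ in compact sets, and $(z,w)\mapsto D_wa(z,w)$ is continuous.

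For the H\"older estimate in $z$ I would fix disks $\D_r$, $\D_R$, a point $w\in\D_R$ and $z_1,z_2\in\D_r$, write $a_i=a(z_i,w)$, and split
\[
\cH(z_1,w)-\cH(z_2,w)=\bigl[\partial_{\zbar}\phi_{a_1}(z_1)-\partial_{\zbar}\phi_{a_1}(z_2)\bigr]+\bigl[\partial_{\zbar}\phi_{a_1}(z_2)-\partial_{\zbar}\phi_{a_2}(z_2)\bigr].
\]
By \eqref{linearboundH} the first bracket is at most $\|D_z\phi_{a_1}\|_{C^s(\D_r)}\,|z_1-z_2|^s\le c(s,r)\,|a_1|\,|z_1-z_2|^s$. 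For the second bracket, (F2) together with the continuity of $D_z\phi_a$ (which promotes the a.e.\ Beltrami inequality to every point, exactly as in the proof of Theorem~\ref{family}) gives $|\partial_{\zbar}\phi_{a_1}(z_2)-\partial_{\zbar}\phi_{a_2}(z_2)|\le\frac{K-1}{K+1}\,|\partial_z\phi_{a_1}(z_2)-\partial_z\phi_{a_2}(z_2)|$; but $\partial_z\phi_{a_2}(z_2)=w=\partial_z\phi_{a_1}(z_1)$ by the very definition of $a_1$ and $a_2$, so this difference equals $\partial_z\phi_{a_1}(z_2)-\partial_z\phi_{a_1}(z_1)$ and is again bounded by $c(s,r)\,|a_1|\,|z_1-z_2|^s$ through \eqref{linearboundH}. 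Finally $|a_1|=|a(z_1,w)|\le c(K,r)\,|w|\le c(K,r)\,R$ by non-degeneracy (as in \eqref{aboundbyw}), so $|\cH(z_1,w)-\cH(z_2,w)|\le c(\D_r,\D_R)\,|z_1-z_2|^s$. Note that only the boundedness of $a(z,w)$ enters here, not its own H\"older continuity.

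For the continuity of $(z,w)\mapsto D_w\cH(z,w)$ I would work from the representation $\cH(z,w)=\Psi_z(a(z,w))$ with $\Psi_z(a):=\partial_{\zbar}\phi_a(z)$. The smoothness condition \eqref{smooth} says precisely that $a\mapsto\Psi_z(a)$ is $C^1$ and that $(z,a)\mapsto D_a\Psi_z(a)$ is continuous (it is one of the blocks of $D_aD_z\phi_a(z)$), while Lemma~\ref{moregularityofa}(b) gives that $w\mapsto a(z,w)$ is $C^1$ with $(z,w)\mapsto D_wa(z,w)$ continuous. Viewing all objects as $z$-parametrised $\R$-linear maps of $\C\cong\R^2$, the chain rule yields
\[
D_w\cH(z,w)=D_a\Psi_z(a)\big|_{a=a(z,w)}\circ D_wa(z,w),
\]
in particular $w\mapsto\cH(z,w)\in C^1$. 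To get joint continuity of the right-hand side I would first observe that $(z,w)\mapsto a(z,w)$ is jointly continuous: it is continuous in $w$ for fixed $z$ by Lemma~\ref{continuityofa}(a), and the uniform $C^s_{\loc}$-bound of Lemma~\ref{moregularityofa}(a) makes it equicontinuous in $z$ over compact $w$-sets, and those two facts combine to joint continuity. Then $(z,w)\mapsto D_a\Psi_z(a)\big|_{a=a(z,w)}$ is continuous as a composition of continuous maps, $(z,w)\mapsto D_wa(z,w)$ is continuous by Lemma~\ref{moregularityofa}(b), and composition of linear maps (matrix multiplication) is continuous; hence $(z,w)\mapsto D_w\cH(z,w)$ is continuous.

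The routine parts are the estimates of the second paragraph. The step that needs the most care is the last one: organising the chain rule with all quantities viewed as $z$-parametrised $\R$-linear maps on $\C$, and upgrading the separate continuities of $a(z,w)$ in its two variables to joint continuity so that the displayed composition is genuinely continuous in $(z,w)$. Everything else is bookkeeping built on Theorem~\ref{family} and Lemmas~\ref{continuityofa} and \ref{moregularityofa}.
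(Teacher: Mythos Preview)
Your proof is correct, and for the H\"older estimate in $z$ you actually take a slightly different---and cleaner---route than the paper. The paper splits as
\[
|\partial_{\zbar}\phi_{a(z_1,w)}(z_1)-\partial_{\zbar}\phi_{a(z_2,w)}(z_1)|+|\partial_{\zbar}\phi_{a(z_2,w)}(z_1)-\partial_{\zbar}\phi_{a(z_2,w)}(z_2)|,
\]
bounds the first term via the local Lipschitz dependence $a\mapsto D_z\phi_a$ together with the H\"older continuity of $z\mapsto a(z,w)$ from Lemma~\ref{moregularityofa}(a), and the second via \eqref{linearboundH}. You instead split the other way and, for the $a$-varying piece, use (F2) followed by the identity $\partial_z\phi_{a_2}(z_2)=w=\partial_z\phi_{a_1}(z_1)$, which converts it back into a pure $z$-increment of $\partial_z\phi_{a_1}$ and lets \eqref{linearboundH} finish the job. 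This bypasses both the $C^1$-in-$a$ dependence of $D_z\phi_a$ and the $C^s$ regularity of $a(z,w)$, exactly as you observe. The paper's route is more ``generic triangle inequality,'' yours exploits the specific structure $F_{z_i}(a_i)=w$; both are short, but yours needs less input for this step.

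For the continuity of $D_w\cH$ you match the paper's chain-rule argument and are in fact more careful: the paper simply cites smoothness \eqref{smooth} and Lemma~\ref{moregularityofa}(b), implicitly treating the joint continuity of $(z,w)\mapsto a(z,w)$ as known, whereas you spell out how the uniform $C^s_{\loc}$ bound in Lemma~\ref{moregularityofa}(a) upgrades the separate continuity of Lemma~\ref{continuityofa} to joint continuity. That extra sentence is worth keeping.
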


\begin{proof}
In view of Theorem \ref{family} we only need to show the extra smoothness of $\cH$. Let $z, z_1, z_2 \in \D_r$ and $w, w_1, w_2\in\D_R$. From the definition of $\cH$, \eqref{defH},
$$
\aligned
&|\cH(z_1, w) - \cH(z_2, w)| = |\partial_{\zbar}\phi_{a(z_1, w)}(z_1) - \partial_{\zbar}\phi_{a(z_2, w)}(z_2)|\\
&\quad \leq|\partial_{\zbar}\phi_{a(z_1, w)}(z_1) - \partial_{\zbar}\phi_{a(z_2, w)}(z_1)| + |\partial_{\zbar}\phi_{a(z_2, w)}(z_1) - \partial_{\zbar}\phi_{a(z_2, w)}(z_2)|\\
&\quad \leq c(\D_r, \D_R) |a(z_1, w) - a(z_2, w)| + c(s, \D_r)|z_1 - z_2|^s |a(z_2, w)|\\
&\quad\leq c(\D_r, \D_R)|z_1 - z_2|^s,
\endaligned
$$
 where the second to the last inequality follows as $a \mapsto D_z \phi_a \in C^1(\C)$ (and thus in particularly locally Lipschitz) and by \eqref{linearboundH}. In the last inequality we use Lemma~\ref{moregularityofa} $(a)$ and the non-degeneracy as in \eqref{aboundbyw}.
 
 Concerning the continuity of $D_w \cH(z, w)$, the chain rule shows that 
 $$D_w \cH(z, w) = (D_a\partial_{\zbar}\phi_{a})\vert_{a=a(z, w)} \,D_w a(z, w)
$$
is continuous by smoothness \eqref{smooth} and Lemma~\ref{moregularityofa} $(b)$.
\end{proof}

\begin{rem}
As we have discussed several times in the paper, in the absence of regularity,  one can start with a general field of quasiconformal mappings and define an equation $\cH_{\cF}$. Unfortunately if we do not make further assumptions, (H1) and (H2) do not need to hold. As for the ellipticity, by quasiconformality it holds that,  for fixed $a,b$, there is a set 
of measure zero $N_{a,b}$ such that 
$$
|\partial_{\zbar}\phi_a(z) - \partial_{\zbar}\phi_b(z)| \leq k|\partial_z \phi_a(z) - \partial_z \phi_b(z)| \qquad \text{for every $z\in\C\setminus N_{a,b}$}.
$$
Thus, for fixed $w_1,w_2$, the corresponding $\cH_{\cF}$ would be only elliptic up to measure zero depending on $w_1,w_2$. The measurability is also not clear unless one assumes extra conditions.  It would suffice, for example, to prove that $a(z,w)$ is measurable and satisfies the Lusin condition $N^{-1}$ in order that the composition  $\cH(z,w)= \partial_{\zbar}\phi_a(z)\vert_{a=a(z,w)}$ is measurable \cite[page 73]{AIM} or to assume joint measurability properties of the mapping $(z, a) \mapsto D_z\phi_a(z)$ but we will not pursue this issue here.

All in all, it remains an interesting issue what is the minimal regularity to establish the one-to-one correspondence between structure functions $\cH$ and quasiconformal fields $\cF$ in the nonlinear setting. 
\end{rem}

\end{document}